\documentclass[aos,preprint]{imsart}

\RequirePackage[OT1]{fontenc}
\RequirePackage{amsthm,amsmath}
\RequirePackage[square,numbers,sort]{natbib}
\RequirePackage[colorlinks,citecolor=blue,urlcolor=blue]{hyperref}

\RequirePackage{amsmath,amssymb,amsfonts,amsthm,latexsym}
\RequirePackage{amscd,graphicx,color,enumerate,float,caption}
\RequirePackage[all]{xy}
\RequirePackage{comment}

\arxiv{arXiv:1705.10735}
\startlocaldefs

\numberwithin{equation}{section}
\theoremstyle{plain}

\newtheorem{theorem}{Theorem}[section]
\newtheorem{lemma}[theorem]{Lemma}
\newtheorem{proposition}[theorem]{Proposition}
\newtheorem{corollary}[theorem]{Corollary}

\theoremstyle{definition}
\newtheorem{definition}[theorem]{Definition}

\newtheorem{remark}[theorem]{Remark}

\newcommand{\R}{\mathbb{R}}

\newcommand{\ttinf}{2\rightarrow\infty}

\newcommand{\rmF}{\textrm{F}}
\newcommand{\rmTr}{\textrm{tr}}

\newcommand{\rmRank}{\textrm{rank}}
\newcommand{\rmMax}{\textrm{max}}
\newcommand{\rmMin}{\textrm{min}}

\endlocaldefs

\begin{document}

\begin{frontmatter}
\title{The two-to-infinity norm and singular subspace geometry with applications to high-dimensional statistics\thanksref{T1}}
\runtitle{The two-to-infinity norm and singular subspace geometry}
\thankstext{T1}{This work is partially supported by the XDATA program of the Defense Advanced Research Projects Agency (DARPA) administered through Air Force Research Laboratory (AFRL) contract FA8750-12-2-0303 and by the DARPA D3M program administered through AFRL contract FA8750-17-2-0112. This work is also supported by the Acheson J.~Duncan Fund for the Advancement of Research in Statistics at Johns Hopkins University.}

\begin{aug}
\author{\fnms{Joshua}~\snm{Cape}\and\fnms{Minh}~\snm{Tang}\and\fnms{Carey~E.}~\snm{Priebe}
	\ead[label=e1]{joshua.cape@jhu.edu}
	\ead[label=e2]{minh@jhu.edu}
	\ead[label=e3]{cep@jhu.edu}
}


\runauthor{J.~Cape, M.~Tang, and C.~E.~Priebe}

\affiliation{Johns Hopkins University
	}

\address{Department of Applied Mathematics and Statistics\\
Johns Hopkins University\\
3400 North Charles Street\\
Baltimore, Maryland 21218\\
USA\\
\printead{e1}\\
\phantom{E-mail:\ }\printead*{e2}\\
\phantom{E-mail:\ }\printead*{e3}}
\end{aug}

\begin{abstract}
	The singular value matrix decomposition plays a ubiquitous role throughout statistics and related fields. Myriad applications including clustering, classification, and dimensionality reduction involve studying and exploiting the geometric structure of singular values and singular vectors.
	
	This paper provides a novel collection of technical and theoretical tools for studying the geometry of singular subspaces using the two-to-infinity norm. Motivated by preliminary deterministic Procrustes analysis, we consider a general matrix perturbation setting in which we derive a new Procrustean matrix decomposition. Together with flexible machinery developed for the two-to-infinity norm, this allows us to conduct a refined analysis of the induced perturbation geometry with respect to the underlying singular vectors even in the presence of singular value multiplicity. Our analysis yields singular vector entrywise perturbation bounds for a range of popular matrix noise models, each of which has a meaningful associated statistical inference task. In addition, we demonstrate how the two-to-infinity norm is the preferred norm in certain statistical settings. Specific applications discussed in this paper include covariance estimation, singular subspace recovery, and multiple graph inference. 
	
	Both our Procrustean matrix decomposition and the technical machinery developed for the two-to-infinity norm may be of independent interest.
\end{abstract}

\begin{keyword}[class=MSC]
\kwd[Primary ]{62H12}
\kwd{62H25}
\kwd[; secondary ]{62H30}
\end{keyword}

\begin{keyword}
\kwd{singular value decomposition}
\kwd{principal component analysis}
\kwd{eigenvector perturbation}
\kwd{spectral methods}
\kwd{Procrustes analysis}
\kwd{high-dimensional statistics}
\end{keyword}

\end{frontmatter}

\section{Introduction}
\label{sec:Introduction}
\subsection{Background}
\label{subsec:Background}
The geometry of singular subspaces is of fundamental importance throughout a wide range of fields including statistics, machine learning, computer science, applied mathematics, and network science. Singular vectors (resp.~eigenvectors) together with their corresponding subspaces and singular values (resp.~eigenvalues) appear throughout various statistical applications including 
principal component analysis
\cite{nadler2008finite, jolliffe1986principal, Cai-et-al--2013AoS, koltchinskii2017pca},
covariance matrix estimation \cite{johnstone2001PCA, Fan-et-al--2013JRSSB, Fan-et-al--2015AoS, Fan-et-al--2016},
spectral clustering \cite{Rohe-Chatterjee-Yu--2011, Luxburg--2007, Lei-Rinaldo--2015, sarkar2015role},
and graph inference \cite{Tang-et-al--2014, Tang-et-al--2016, Tang-Priebe--2016+}.

Singular subspaces are also studied in random matrix theory, a discipline which has come to have a profound influence on the development of high-dimensional statistical theory \cite{Bai-Silverstein--2010, Yao-et-al--2015}. In random matrix theory, topics of interest include the phenomenon of eigenvector delocalization \cite{Rudelson-Vershynin--2015} as well as the spectral behavior of (often low rank) matrices undergoing random perturbations \cite{O'Rourke-Vu-Wang--2014, benaych2011eigenvalues}. For an overview of recent work on the properties of eigenvectors of random matrices, see \cite{O'Rourke-Vu-Wang--2016JCTA}. Further discussion of how random matrix theory has come to impact statistics can be found in \cite{Paul-Aue--2014}.

From a computational perspective, optimization algorithms in signal processing and compressed sensing are often concerned with the behavior of singular vectors and subspaces \cite{Eldar-Kutyniok--2012}. The study of algorithmic performance on manifolds and manifold learning, especially involving the Grassmann and Stiefel manifolds, motivates related interest in a collection of Procrustes-type problems \cite{Edelman-et-al--1998, Bojanczyk-Lutoborski--1999, Benidis-et-al--2016}. Procrustes analysis occupies an established area within the theoretical study of statistics on manifolds \cite{Chikuse--2012} and arises in applications including diffusion tensor imaging \cite{Dryden--2009} and shape analysis \cite{Dryden--2016}. See \cite{Gower-Dijksterhuis--2004} for an extended treatment of both theoretical and numerical aspects concerning Procrustes-type problems.

Foundational results from matrix theory concerning the perturbation of singular values, vectors, and subspaces date back to the original work of Weyl \cite{Weyl--1912}, Davis and Kahan \cite{Davis-Kahan--1970}, and Wedin \cite{Wedin--1972}, among others. Indeed, these results form the backbone of much of the linear algebraic machinery that has since been developed for use in statistics. The classical references \cite{Stewart-Sun--1990, Bhatia--1997, Horn-Johnson-I--2012} provide further treatment of these foundational results and historical developments.

\subsection{Overview}
\label{subsec:Overview}
This paper provides a collection of technical and theoretical tools for studying the perturbations of singular vectors and subspaces with respect to the two-to-infinity norm (defined below). Our main theoretical results are first presented quite generally and then followed by concrete consequences thereof to facilitate direct statistical applications. In this work, we prove perturbation theorems for both low and high rank matrices. Among the advantages of our methods is that we allow singular value multiplicity and merely leverage a population singular value gap assumption in the spirit of \cite{Yu-Wang-Samworth--2015}.

As a special case of our general framework and methods, we improve upon results in \cite{Fan-et-al--2016} wherein the authors obtain an $\ell_{\infty}$ norm perturbation bound for singular vectors of low rank matrices exhibiting specific coherence structure. In this way, beyond the stated theorems in this paper, our results can be applied analogously to robust covariance estimation involving heavy-tailed random variables.

Our Procrustes analysis complements the study of perturbation bounds for singular subspaces in \cite{Cai-Zhang--2016}. When considered in tandem, we demonstrate a Procrustean setting in which one recovers nearly rate-matching upper and lower bounds with respect to the two-to-infinity norm.

Yet another consequence of this work is that we extend and complement current spectral methodology for graph inference and embedding \cite{Tang-et-al--2016, Lyzinski-et-al--2014EJS,levin2017central}. To the best of our knowledge, we obtain among the first-ever estimation bounds for multiple graph inference in the presence of edge correlation.

\subsection{Setting}
\label{subsec:Setting}
This paper formulates and analyzes a general matrix decomposition for the aligned difference between real matrices $U$ and $\hat{U}$, each consisting of $r$ orthonormal columns (i.e.~partial isometries; Stiefel matrices; orthogonal $r$-frames), given by
\begin{equation}
	\label{eqn:Uhat-UW}
	\hat{U}-UW,
\end{equation}
where $W$ denotes an $r \times r$ orthogonal matrix. We focus on (but are strictly speaking not limited to) a certain ``nice'' choice of $W$ which corresponds to an ``optimal'' Procrustes transformation in a sense that will be made precise.

Along with the matrix decomposition considerations presented in this paper, we provide technical machinery for the two-to-infinity subordinate vector norm on matrices, defined for $A \in \R^{p_{1} \times p_{2}}$ by
\begin{equation}
	\|A\|_{2 \rightarrow \infty} := \underset{\|x\|_2=1}{\textrm{sup}}\|Ax\|_{\infty}.
\end{equation}
Together, these allow us to obtain a suite of perturbation bounds within an additive perturbation framework of the singular value decomposition.

The two-to-infinity norm yields finer uniform control on the entries of a matrix than the more common spectral and Frobenius norms. We shall demonstrate that, in certain settings, the two-to-infinity norm is preferable to these and to other norms. In particular, matrices exhibiting bounded coherence in the sense of \cite{Candes-Recht--2009} form a popular and widely-encountered class of matrices for which the two-to-infinity norm is demonstrably an excellent choice.

The two-to-infinity norm has previously appeared in the statistics literature, including in \cite{Lyzinski-et-al--2014EJS} wherein it is leveraged to prove that adjacency spectral embedding achieves perfect clustering for certain stochastic block model graphs. More recently, it has also appeared in the study of random matrices when a fraction of the matrix entries are modified \cite{Rebrova-Vershynin--2016}. In general, however, the two-to-infinity norm has received far less attention than other norms. Among the aims of this paper is to advocate for the more widespread consideration of the two-to-infinity norm.

\subsection{Sample application:~covariance estimation}
\label{subsec:Sample_application}
We pause here to present an application of our work and methods to estimating the top singular vectors of a structured covariance matrix.

Denote a random vector $Y$ by $Y:=(Y^{(1)},Y^{(2)},\dots,Y^{(d)})^{\top}\in\R^{d}$, and let $Y,Y_{1},Y_{2},\dots,Y_{n}$ be independent and identically distributed (i.i.d.) mean zero multivariate Gaussian random (column) vectors with common covariance matrix $\Gamma \in \R^{d \times d}$. Denote the singular value decomposition of $\Gamma$ by $\Gamma \equiv U \Sigma U^{\top} + U_{\perp} \Sigma_{\perp} U_{\perp}^{\top}$, where $[U|U_{\perp}]\equiv[u_{1}|u_{2}|\dots|u_{d}]\in\R^{d \times d}$ is an orthogonal matrix. The singular values of $\Gamma$ are indexed in non-increasing order, $\sigma_{1}(\Gamma)\ge\sigma_{2}(\Gamma)\ge\dots\ge\sigma_{d}(\Gamma)$, with $\Sigma:=\text{diag}(\sigma_{1}(\Gamma),\sigma_{2}(\Gamma),\dots,\sigma_{r}(\Gamma))\in\R^{r \times r}$, $\Sigma_{\perp}:=\text{diag}(\sigma_{r+1}(\Gamma),\sigma_{r+2}(\Gamma),\dots,\sigma_{d}(\Gamma))\in\R^{(d-r) \times (d-r)}$, $\delta_{r}(\Gamma):=\sigma_{r}(\Gamma)-\sigma_{r+1}(\Gamma)>0$, and $r \ll d$.  Here, $\Sigma$ may be viewed as containing the ``signal'' (i.e.~spike) singular values of interest, while $\Sigma_{\perp}$ contains the remaining ``noise'' (i.e.~bulk) singular values. The singular values of $\Gamma$ are not assumed to be distinct; rather, the assumption $\delta_{r}(\Gamma)>0$ simply specifies a singular value population gap between $\Sigma$ and $\Sigma_{\perp}$.

Let $\hat{\Gamma}_{n}$ denote the empirical covariance matrix $\hat{\Gamma}_{n}:= \frac{1}{n}\sum_{k=1}^{n}Y_{k}Y_{k}^{\top}$ with decomposition $\hat{\Gamma}_{n} \equiv \hat{U}\hat{\Sigma}\hat{U}^{\top} + \hat{U}_{\perp}\hat{\Sigma}_{\perp}\hat{U}_{\perp}^{\top}$.
Let $E_{n}:=\hat{\Gamma}_{n}-\Gamma$ denote the difference between the empirical and theoretical covariance matrices. Below, $C, c, c_{1}, c_{2},\dots$ are positive constants (possibly related) of minimal interest. Below, $\textnormal{Var}(Y^{(i)})$ denotes the variance of $Y^{(i)}$.

We are interested in the regime where the sample size $n$ and the covariance matrix dimension $d$ are simultaneously allowed to grow. In this regime, an important measure of complexity is given by the effective rank of $\Gamma$, defined as $\mathfrak{r}(\Gamma):=\textnormal{trace}(\Gamma)/\sigma_{1}(\Gamma)$ \cite{koltchinskii2017pca}.

\begin{theorem}[Application:~covariance estimation]
	\label{thrm:generalCovarEst}
	In Section~\ref{subsec:Sample_application}, assume that $\textnormal{max}\{\mathfrak{r}(\Gamma), \log d\} = o(n)$, $\sigma_{1}(\Gamma)/\sigma_{r}(\Gamma)\le c_{1}$, $\delta_{r}(\Gamma) \ge c_{2}\sigma_{r}(\Gamma) > 0$, and $\|U\|_{\ttinf} \le c_{3}\sqrt{r/d}$. Let $\nu(Y):=\textnormal{max}_{1 \le i \le d}\sqrt{\textnormal{Var}(Y^{(i)})}$. Then there exists an $r \times r$ orthogonal matrix $W$ and a constant $C>0$ such that with probability at least $1-d^{-2}$,
	\begin{align*}
			\|\hat{U} - UW_{U}\|_{\ttinf}
			&\le C \sqrt{\tfrac{ \textnormal{max}\{\mathfrak{r}(\Gamma), \log d\}}{n}}
			\left(
			\tfrac{\nu(Y)r}{\sqrt{\sigma_{r}(\Gamma)}}
			+ \tfrac{\sigma_{r+1}(\Gamma)}{\sigma_{r}(\Gamma)} \right)\\
			&+ C \left(\tfrac{ \textnormal{max}\{\mathfrak{r}(\Gamma), \log d\}}{n}\right)
			\left(\sqrt{\tfrac{\sigma_{r+1}(\Gamma)}{\sigma_{r}(\Gamma)}}
			+ \sqrt{\tfrac{r}{d}}\right).
	\end{align*}
\end{theorem}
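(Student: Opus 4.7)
The plan is to invoke the paper's general Procrustean two-to-infinity perturbation theorem on the symmetric additive model $\hat\Gamma_n = \Gamma + E_n$ and then substitute two Gaussian-specific concentration estimates. The reduction isolates the randomness into (a) the operator norm $\|E_n\|_{\rmOp}$ and (b) the row-wise quantity $\|E_n U\|_{\ttinf}$, and the incoherence and gap hypotheses then shape the deterministic coefficients into the stated form.

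\textbf{Step 1 (deterministic reduction).} Apply the general two-to-infinity Procrustean bound from earlier in the paper to $(\Gamma,\hat\Gamma_n)$. After specializing to the symmetric PSD setting and using $\delta_r(\Gamma)\ge c_2\,\sigma_r(\Gamma)$ and $\sigma_1(\Gamma)/\sigma_r(\Gamma)\le c_1$ to collapse denominators, the bound takes the schematic shape
\[
\|\hat U - U W_U\|_{\ttinf} \lesssim \tfrac{\|E_n U\|_{\ttinf}}{\sigma_r(\Gamma)} + \|U\|_{\ttinf}\,\tfrac{\sigma_{r+1}(\Gamma)}{\sigma_r(\Gamma)} + \|U\|_{\ttinf}\,\tfrac{\|E_n\|_{\rmOp}^{2}}{\sigma_r(\Gamma)^{2}} + (\text{cross terms}),
\]
and the incoherence hypothesis $\|U\|_{\ttinf}\le c_3\sqrt{r/d}$ replaces every free $\|U\|_{\ttinf}$ by $\sqrt{r/d}$.

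\textbf{Step 2 (operator-norm concentration).} Since $Y_k\sim\mathcal N(0,\Gamma)$ i.i.d., a Koltchinskii--Lounici-style sample-covariance bound gives
\[
\|E_n\|_{\rmOp} \lesssim \sigma_1(\Gamma)\,\sqrt{\tfrac{\max\{\mathfrak r(\Gamma),\log d\}}{n}}
\]
with probability at least $1-d^{-2}$ under $\max\{\mathfrak r(\Gamma),\log d\}=o(n)$. Squaring this, dividing by $\sigma_r(\Gamma)^{2}$, and multiplying by the $\sqrt{r/d}$ coming from incoherence feeds the $(\max\{\mathfrak r(\Gamma),\log d\}/n)\cdot\sqrt{r/d}$ piece on the second line of the target bound; an analogous treatment of the mixed cross-terms handles the $\sqrt{\sigma_{r+1}(\Gamma)/\sigma_r(\Gamma)}$ contribution.

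\textbf{Step 3 (row-wise concentration of $E_n U$).} For each row index $i$, write
\[
(E_n U)_{i,\cdot}^{\top} = \frac{1}{n}\sum_{k=1}^{n}\Bigl\{ Y_k^{(i)}\,U^{\top}Y_k - \mathbb E\bigl[Y_k^{(i)}\,U^{\top}Y_k\bigr]\Bigr\}\in\R^{r}.
\]
Each summand is a centered sub-exponential $\R^r$-vector whose $\psi_1$-norm is controlled by the product $\|Y_k^{(i)}\|_{\psi_2}\,\|U^{\top}Y_k\|_{\psi_2}$, which under the Gaussian assumption is $\lesssim \nu(Y)\sqrt{r\,\sigma_1(\Gamma)}$, with the $\sqrt{r}$ absorbing the $U$-dependence via the incoherence assumption. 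A vector sub-exponential Bernstein inequality together with a union bound over $1\le i\le d$ then yields
\[
\|E_n U\|_{\ttinf} \lesssim \nu(Y)\,r\,\sqrt{\tfrac{\sigma_r(\Gamma)\,\max\{\mathfrak r(\Gamma),\log d\}}{n}}
\]
plus genuinely lower-order remainders, with probability at least $1-d^{-2}$. Dividing by $\sigma_r(\Gamma)$ produces the first-line $\nu(Y)\,r/\sqrt{\sigma_r(\Gamma)}$ contribution.

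\textbf{Step 4 (assembly and main obstacle).} Take a union bound over the two high-probability events of Steps 2--3 and substitute into the inequality of Step 1 to recover the two-line form in the statement. The first-line $\sqrt{\max\{\mathfrak r(\Gamma),\log d\}/n}\cdot\sigma_{r+1}(\Gamma)/\sigma_r(\Gamma)$ piece arises as the cross term between the deterministic leakage $\sigma_{r+1}/\sigma_r$ and the random factor $\|E_n\|_{\rmOp}/\sigma_r$. The main obstacle is Step 3: matching the stated $r$, $\nu(Y)$, and $\sqrt{\sigma_r(\Gamma)}$ scaling (rather than the coarser $\sqrt{r\,\sigma_1(\Gamma)}$ that a naive Bernstein bound would yield) requires carefully tracking the Orlicz norms and variance proxies of the quadratic forms in correlated Gaussians, exploiting the incoherence $\|U\|_{\ttinf}\le c_3\sqrt{r/d}$ rather than losing it, and correctly splitting the effective-rank-dominant and $\log d$-dominant regimes.
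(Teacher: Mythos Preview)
Your overall architecture matches the paper's: apply the baseline two-to-infinity Procrustean bound, feed in Koltchinskii--Lounici concentration for $\|E_n\|_2$, and control $\|E_n U\|_{\ttinf}$ entrywise via Bernstein for sub-exponentials. But two points in your execution are misplaced in ways that matter for recovering the stated exponents.

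First, your Step~1 schematic is not what the baseline theorem actually delivers. The leading term is $\|(U_\perp U_\perp^\top)E_n(UU^\top)\|_{\ttinf}/\sigma_r(\Gamma)$, not $\|E_n U\|_{\ttinf}/\sigma_r(\Gamma)$. The paper passes between these via $\|(U_\perp U_\perp^\top)E_n U\|_{\ttinf}\le \|U_\perp U_\perp^\top\|_\infty\,\|E_n U\|_{\ttinf}$ and then bounds $\|U_\perp U_\perp^\top\|_\infty = \|I-UU^\top\|_\infty \le 1+\sqrt{d}\,\|U\|_{\ttinf}\le C\sqrt{r}$ using incoherence. \emph{This} is where incoherence enters to produce one of the two $\sqrt{r}$ factors in the final $r$; your Step~3 instead attributes that $\sqrt{r}$ to the sub-exponential norm of $U^\top Y_k$ ``via the incoherence assumption,'' which is incorrect (the vector $U^\top Y \sim \mathcal N(0,\Sigma)$ has sub-Gaussian norm $\asymp\sqrt{\sigma_1(\Gamma)}$ with no $r$-dependence and no incoherence needed). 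The second $\sqrt{r}$ comes simply from $\|E_n U\|_{\ttinf}\le \sqrt{r}\,\|E_n U\|_{\rmMax}$.

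Second, the $\sqrt{\sigma_{r+1}(\Gamma)/\sigma_r(\Gamma)}$ term on the second line is not a generic ``cross term.'' It arises from the second summand of the baseline bound, $\|(U_\perp U_\perp^\top)E_n(U_\perp U_\perp^\top)\|_{\ttinf}\|\sin\Theta\|_2/\sigma_r(\Gamma)$, and controlling it requires a separate application of operator-norm concentration to the projected vectors $U_\perp^\top Y \sim \mathcal N(0,\Sigma_\perp)$, yielding $\|(U_\perp U_\perp^\top)E_n(U_\perp U_\perp^\top)\|_2 \lesssim \sigma_{r+1}(\Gamma)\sqrt{\mathfrak r(\Sigma_\perp)/n}\le \sqrt{\sigma_{r+1}(\Gamma)\sigma_1(\Gamma)}\sqrt{\mathfrak r(\Gamma)/n}$ via $\mathfrak r(\Sigma_\perp)\le (\sigma_1/\sigma_{r+1})\mathfrak r(\Gamma)$. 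Your Step~2 does not supply this. Relatedly, the term $\|U\|_{\ttinf}\cdot\sigma_{r+1}/\sigma_r$ in your Step~1 schematic never occurs; the first-line $\sigma_{r+1}/\sigma_r$ contribution instead comes from $\|(U_\perp U_\perp^\top)\Gamma(U_\perp U_\perp^\top)\|_{\ttinf}\le \sigma_{r+1}(\Gamma)$ multiplied by $\|\sin\Theta\|_2 \lesssim \|E_n\|_2/\sigma_r(\Gamma)$, as you correctly note in Step~4 but contradict in Step~1.
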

\begin{remark}
	\label{rem:covariance_theorem_context}
	In the setting of Theorem~\ref{thrm:generalCovarEst}, spectral norm probabilistic concentration \cite{koltchinskii2017concentration,koltchinskii2017pca} can be applied to yield a na\"{i}ve two-to-infinity norm bound of the form
	\begin{equation}
	\label{eq:covarianceNaiveSpectral}
		\|\hat{U} - UW\|_{\ttinf}
		\le C\sqrt{\tfrac{ \textrm{max}\{\mathfrak{r}(\Gamma), \log d\}}{n}}.
	\end{equation}
	When $\Gamma$ exhibits the additional spike structure $\Gamma \equiv U(\Lambda+c^{2}I)U^{\top} + c^{2}U_{\perp}U_{\perp}^{\top}$ with $\sigma_{1}(\Gamma) \ge c_{4}(d/r)$, then $\sqrt{\sigma_{r+1}(\Gamma)}, \nu(Y) \le c_{5} \sqrt{\sigma_{1}(\Gamma)}\sqrt{r/d}$, and so the bound in Theorem~\ref{thrm:generalCovarEst} simplifies to the form
	\begin{equation}
	\label{eq:covarianceSpikeSimplified}
		\|\hat{U} - UW\|_{\ttinf}
		\le C \sqrt{\tfrac{\textrm{max}\{\mathfrak{r}(\Gamma), \log d\}}{n}}\sqrt{\tfrac{r^{3}}{d}}.
	\end{equation}
	The bound in Eq.~(\ref{eq:covarianceSpikeSimplified}) manifestly improves upon Eq.~(\ref{eq:covarianceNaiveSpectral}) since here $r \ll d$ and $d$ is taken to be large.
\end{remark}

\subsection{Organization}
\label{subsec:Organization}
The rest of this paper is organized as follows.
Section~\ref{sec:Preliminaries} establishes notation, motivates the use of the two-to-infinity norm in a Procrustean context, and presents the additive matrix perturbation model considered in this paper.
Section~\ref{sec:Main_results} collects our main results which fall into two categories:~matrix decompositions and matrix perturbation theorems.
Section~\ref{sec:Applications} demonstrates how this paper improves upon and complements existing work in the literature by way of considering three statistical applications involving covariance estimation (see also Section~\ref{subsec:Sample_application}), singular subspace recovery, and multiple graph inference. Section~\ref{sec:Discussion} offers some final remarks.
Section~\ref{sec:Proofs} contains the technical machinery developed for this paper as well as proofs of our main theorems.

\section{Preliminaries}
\label{sec:Preliminaries}
\subsection{Notation}
\label{subsec:Notation}
All vectors and matrices in this paper are taken to be real-valued. The symbols $:=$ and $\equiv$ are used to assign definitions and to denote formal equivalence. For any positive integer $n$, let $[n]:=\{1,2,\dots,n\}$. We use $C_{\alpha}$ to denote a general constant that may change from line to line unless otherwise specified and that possibly depends only on $\alpha$ (either a parameter or an indexing value). Let $\mathcal{O}(\cdot)$ denote standard big-O notation, possibly with an underlying probabilistic qualifying statement. These conventions are simultaneously upheld when we write $\mathcal{O}_{\alpha}(\cdot)$. We let $\mathbb{O}_{p,r}$ denote the set of all $p \times r$ real matrices with orthonormal columns so that $\mathbb{O}_{p} \equiv \mathbb{O}_{p,p}$ denotes the set of orthogonal matrices in $\R^{p \times p}$.

For (column) vectors $x,y \in \R^{p_{1}}$ where $x \equiv (x_{1},\dots,x_{p_{1}})^{\top}$, the standard Euclidean inner product between $x$ and $y$ is denoted by $\langle x,y \rangle$. The classical $\ell_{p}$ vector norms are given by $\|x\|_{p} := \left(\sum_{i=1}^{p}|x_{i}|^{p}\right)^{1/p}$ for $1 \le p < \infty$, and $\|x\|_{\infty} := \text{max}_{i}|x_{i}|$. This paper also makes use of several standard matrix norms. Letting $\sigma_{i}(A)$ denote the $i$th largest singular value of $A$, then $\|A\|_{2}:=\sigma_{1}(A)$ denotes the spectral norm of $A$, $\|A\|_{\textrm{F}}:=\sqrt{\sum_{i}\sigma_{i}^{2}(A)}$ denotes the Frobenius norm of $A$, $\|A\|_{1}:=\text{max}_{j}\sum_{i}|a_{ij}|$ denotes the maximum absolute column sum of $A$, and $\|A\|_{\infty}:=\text{max}_{i}\sum_{j}|a_{ij}|$ denotes the maximum absolute row sum of $A$. Additionally, we consider $\|A\|_{\rmMax} := \rmMax_{i,j}|a_{ij}|$.

\subsection{Norm relations}
\label{subsec:Standard_norm_relations}
A central focus of this paper is on the two-to-infinity norm defined for matrices as $\|A\|_{\ttinf} := \textrm{sup}_{\|x\|_2=1}\|Ax\|_{\infty}$. Proposition~\ref{prop:2infty_as_max_Euclidean} establishes the elementary fact that this norm corresponds to the maximum Euclidean row norm of the matrix $A$, thereby making it easily interpretable and straightforward to compute. In certain settings, $\|\cdot\|_{\ttinf}$ will be shown to serve as an attractive surrogate for $\|\cdot\|_{\rmMax}$ in light of additional algebraic properties that $\|\cdot\|_{\ttinf}$ enjoys.

For $A\in\R^{p_{1} \times p_{2}}$, the standard relations between the $\ell_{p}$ norms for $p \in \{1,2,\infty\}$ permit quantitative comparison of $\|\cdot\|_{\ttinf}$ to the relative magnitudes of $\|\cdot\|_{\rmMax}$ and $\|\cdot\|_{\infty}$. In particular, these matrix norms are related through matrix column dimension via
\begin{equation*}
\label{eq:max_2infty_infty_inequal}
	\tfrac{1}{\sqrt{p_{2}}}
	\|A\|_{\ttinf}
	\le \|A\|_{\rmMax}
	\le \|A\|_{\ttinf}
	\le \|A\|_{\infty}
	\le \sqrt{p_{2}}\|A\|_{\ttinf}.
\end{equation*}
In contrast, the relationship between $\|\cdot\|_{\ttinf}$ and $\|\cdot\|_{2}$ depends on the matrix row dimension (Proposition~\ref{prop:2infty_&_spec}) via
\begin{equation*}
	\label{eq:2infty_spectral_inequal}
	\|A\|_{\ttinf}
	\le \|A\|_{2}
	\le \sqrt{p_{1}}\|A\|_{\ttinf}.
\end{equation*}

As an example, consider the rectangular matrix $A:=\{1/\sqrt{p_{2}}\}^{p_{1}\times p_{2}}$, for which $\|A\|_{\ttinf}=1$ while $\|A\|_{2}=\|A\|_{\textrm{F}}=\sqrt{p_{1}}$. This example, together with the above norm relations, demonstrates that possibly $\|A\|_{\ttinf} \ll \|A\|_{2}$ when the row dimension of $A$ is large relative to the column dimension, i.e.~$p_{1} \gg p_{2}$. Bounding $\|A\|_{\ttinf}$ would then be preferred to bounding $\|A\|_{2}$ when seeking more refined (e.g.~entrywise) control of $A$. The same observation holds trivially for the Frobenius norm which satisfies the well-known, rank-based relation with the spectral norm given by
\begin{equation*}
	\label{eqn:spectral_Frobenius_rank_inequal}
	\|A\|_{2} \le \|A\|_{\rmF} \le \sqrt{\text{rank}(A)}\|A\|_{2}.
\end{equation*}

We pause to point out that the two-to-infinity norm is not in general sub-multiplicative for matrices. Moreover, the ``constrained'' sub-multiplicative behavior of $\|\cdot\|_{\ttinf}$ (Proposition~\ref{prop:2infty_relations}), when taken together with the non-commutativity of matrix multiplication and standard properties of more common matrix norms, yields substantial flexibility when bounding matrix products and passing between norms. For this reason, a host of bounds beyond those presented in this paper follow naturally from the matrix decomposition results in Section~\ref{subsec:Matrix_decompositions}. The relative strength of derived bounds will depend upon underlying, application-specific properties and assumptions.

\subsection{Singular subspaces and Procrustes analysis}
\label{subsec:SSG&Procrustes}
Let $\mathcal{U}$ and $\hat{\mathcal{U}}$ denote the corresponding subspaces for which the columns of $U, \hat{U} \in \mathbb{O}_{p,r}$ form orthonormal bases, respectively. From the classical CS matrix decomposition, a natural measure of distance between these subspaces (corresp. matrices) is given via the canonical (i.e.~principal) angles between $\mathcal{U}$ and $\hat{\mathcal{U}}$. More specifically, for the singular values of $U^{\top}\hat{U}$, denoted by $\{\sigma_{i}(U^{\top}\hat{U})\}_{i=1}^{r}$ and indexed in non-increasing order, the canonical angles are the main diagonal elements of the $r \times r$ diagonal matrix
\begin{equation*}
	\Theta(\hat{U},U) := \text{diag}(\cos^{-1}(\sigma_{1}(U^{\top}\hat{U})), \cos^{-1}(\sigma_{2}(U^{\top}\hat{U})), \dots, \cos^{-1}(\sigma_{r}(U^{\top}\hat{U}))).
\end{equation*}
For an in-depth review of the CS decomposition and canonical angles, see for example \cite{Bhatia--1997, Stewart-Sun--1990}. An extensive summary of the relationships between $\sin\Theta$ distances, specifically $\|\sin\Theta(\hat{U},U)\|_{2}$ and $\|\sin\Theta(\hat{U},U)\|_{\rmF}$, as well as various other distance measures, is provided in \cite{Cai-Zhang--2016}. This paper focuses on $\sin\Theta$ distance in relation to Procrustes analysis.

Given two matrices $A$ and $B$ together with a set of matrices $\mathbb{S}$ and a norm $\|\cdot\|$, a general version of the Procrustes problem is given by
\begin{equation*}
	\underset{S\in\mathbb{S}}{\textnormal{inf}}\|A-BS\|.
\end{equation*}
For $U,\hat{U}\in\mathbb{O}_{p,r}$ and $\eta \in \{\rmMax, \ttinf, 2, \rmF\}$, this paper specifically considers
\begin{align}
	\label{eq:UhatUProcrustesForArbNorms}
	\underset{W\in\mathbb{O}_{r}}{\textnormal{inf}}\|\hat{U}-UW\|_{\eta}.
\end{align}
For each choice of $\eta$, the corresponding infimum in Eq.~(\ref{eq:UhatUProcrustesForArbNorms}) is provably achieved by the compactness of $\mathbb{O}_{r}$ together with properties of norms in finite-dimensional vector spaces. As such, let $W_{\eta}^{\star}\in\mathbb{O}_{r}$ denote a corresponding Procrustes solution under $\eta$ (where dependence upon the underlying matrices $U$ and $\hat{U}$ is implicit from context). Unfortunately, these solutions are not analytically tractable in general, save under the Frobenius norm, in which case $W_{U} \equiv W_{\rmF}^{\star}(U,\hat{U})$ corresponds to the the classical orthogonal Procrustes problem solution \cite{Gower-Dijksterhuis--2004} given explicitly by $W_{U} \equiv W_{1}W_{2}^{\top}$ when the singular value decomposition of $U^{\top}\hat{U} \in \R^{r \times r}$ is written as $U^{\top}\hat{U}\equiv W_{1}\Sigma_{U}W_{2}^{\top}$.

For each $\eta$, it is therefore natural to study the behavior of
\begin{align}
	\|\hat{U}-UW_{U}\|_{\eta}.
\end{align}
Towards this end, $\sin\Theta$ distances and the above Procrustes problems are related in the sense that (e.g.~\cite{Cai-Zhang--2016})
\begin{align*}
	\|\sin\Theta(\hat{U},U)\|_{\rmF}
	&\le \|\hat{U}-UW_{U}\|_{\rmF}
	\le \sqrt{2}\|\sin\Theta(\hat{U},U)\|_{\rmF},
\end{align*}
and
\begin{align*}
	\|\sin\Theta(\hat{U},U)\|_{2}
	&\le \|\hat{U}-UW_{2}^{\star}\|_{2}
	\le \|\hat{U}-UW_{U}\|_{2}
	\le \sqrt{2}\|\sin\Theta(\hat{U},U)\|_{2}.
\end{align*}
By Lemma~\ref{lem:Frob_opt_in_spectral}, $\|\hat{U}-UW_{U}\|_{2}$ can be bounded differently in a manner suggesting that the performance of $W_{U}$ is ``close'' to the performance of $W_{2}^{\star}$ under $\|\cdot\|_{2}$, namely:
\begin{align*}
	\label{eq:specW_{U}}
	\|\hat{U}-UW_{U}\|_{2}
	&\le \|\sin\Theta(\hat{U},U)\|_{2} + \|\sin\Theta(\hat{U},U)\|_{2}^{2}.	
\end{align*}
Loosely speaking, this says that the relative fluctuation between $W_{U}$ and $W_{2}^{\star}$ in the spectral norm Procrustes problem behaves as $\mathcal{O}(\|\sin\Theta(\hat{U},U)\|_{2}^{2})$.

As for the two-to-infinity norm, by simply considering the na\"{i}ve relationship between $\|\cdot\|_{2\rightarrow\infty}$ and $\|\cdot\|_{2}$, it follows that
\begin{align*}
	\tfrac{1}{\sqrt{p}}\|\sin\Theta(\hat{U},U)\|_{2}
	&\le \|\hat{U}-UW_{2\rightarrow\infty}^{\star}\|_{2\rightarrow\infty}
	\le \|\hat{U}-UW_{U}\|_{2\rightarrow\infty}.	
\end{align*}
These observations collectively suggest that direct analysis of $\hat{U}-UW_{U}$ may yield meaningfully tighter bounds on $\|\hat{U}-UW_{U}\|_{2\rightarrow\infty}$ in settings wherein $\|\hat{U}-UW_{U}\|_{2\rightarrow\infty} \ll \|\hat{U}-UW_{U}\|_{2}$ when $p \gg r$. In such a regime, $\|\cdot\|_{\ttinf}$ and $\|\cdot\|_{\rmMax}$ differ by at most a (relatively small) $r$-dependent factor, so it is conceivable that $\|\cdot\|_{\ttinf}$ may serve as a decent proxy for $\|\cdot\|_{\rmMax}$.

We now proceed to introduce a matrix perturbation framework in which $\hat{U}$ represents a perturbation (i.e.~estimate) of $U$. We then formulate a Procrustean matrix decomposition in Section~\ref{subsec:Matrix_decompositions} by further decomposing the underlying matrices whose spectral norm bounds give rise to the above quantities $\|\sin\Theta(\hat{U},U)\|_{2}$ and $\|\sin\Theta(\hat{U},U)\|_{2}^{2}$. Together with two-to-infinity norm machinery and model-based analysis, we subsequently derive a collection of operationally significant perturbation bounds and demonstrate their utility in problems of statistical estimation.

\subsection{Perturbation framework for the singular value decomposition}
\label{subsec:SVD_perturb_framework}
For rectangular matrices $X, E \in \R^{p_{1} \times p_{2}}$, let $X$ denote an unobserved matrix, let $E$ denote an unobserved perturbation (i.e.~error) matrix, and let $\hat{X}:=X+E$ denote an observed matrix that amounts to an additive perturbation of $X$ by $E$. For $X$ and $\hat{X}$, their respective partitioned singular value decompositions are given in block matrix form by
\begin{equation*}
	X = \begin{bmatrix}
	U & U_{\perp}
	\end{bmatrix}
	\cdot
	\begin{bmatrix}
	\Sigma & 0 \\
	0 & \Sigma_{\perp}
	\end{bmatrix}
	\cdot
	\begin{bmatrix}
	V^{\top} \\
	V_{\perp}^{\top}
	\end{bmatrix}
	= U \Sigma V^{\top} + U_{\perp} \Sigma_{\perp} V_{\perp}^{\top}
\end{equation*}
and
\begin{equation*}
	\hat{X} := X + E
	= 	\begin{bmatrix}
	\hat{U} & \hat{U}_{\perp}
	\end{bmatrix}
	\cdot
	\begin{bmatrix}
	\hat{\Sigma} & 0 \\
	0 & \hat{\Sigma}_{\perp}
	\end{bmatrix}
	\cdot
	\begin{bmatrix}
	\hat{V}^{\top} \\
	\hat{V}_{\perp}^{\top}
	\end{bmatrix}
	= \hat{U} \hat{\Sigma} \hat{V}^{\top} + \hat{U}_{\perp}\hat{\Sigma}_{\perp}\hat{V}_{\perp}^{\top}.
\end{equation*}
Above, $U \in \mathbb{O}_{p_{1},r}$,
$V \in \mathbb{O}_{p_{2},r}$,
$\left[U|U_{\perp}\right] \in \mathbb{O}_{p_{1}}$, and
$\left[V|V_{\perp}\right] \in \mathbb{O}_{p_{2}}$. The matrices $\Sigma \in \R^{r \times r}$ and $\Sigma_{\perp} \in \R^{(p_{1}-r) \times (p_{2}-r)}$ contain the singular values of $X$, where
$\Sigma=\text{diag}(\sigma_{1}(X),\dots,\sigma_{r}(X))$
and $\Sigma_{\perp}$ contains the remaining singular values $\sigma_{r+1}(X),\dots$ on its main diagonal, possibly padded with additional zeros, such that $\sigma_{1}(X) \ge \dots \ge \sigma_{r}(X) > \sigma_{r+1}(X) \ge \dots \ge 0$. The quantities $\hat{U},\hat{U}_{\perp},\hat{V},\hat{V}_{\perp},\hat{\Sigma}$, and $\hat{\Sigma}_{\perp}$ are defined analogously.

This paper is primarily interested in the case when $\sigma_{r}(X) \gg \sigma_{r+1}(X)$, although our results and framework hold more generally when $\Sigma$ is redefined to contain a collection of sequential singular values that are separated from the remaining singular values in $\Sigma_{\perp}$. In such a modified setting one would have $\Sigma = \textrm{diag}(\sigma_{s}(X), \dots, \sigma_{s+r}(X))$ for some positive integers $s$ and $r$, where subsequent bounds and necessary bookkeeping would depend both upon the two-sided gap $\rmMin\{\sigma_{s-1}(X)-\sigma_{s}(X),\sigma_{s+r}(X)-\sigma_{s+r+1}(X)\}$ and on the magnitude of the perturbation $E$, as in \cite{Yu-Wang-Samworth--2015}.

\section{Main results}
\label{sec:Main_results}
\subsection{A Procrustean matrix decomposition and its variants}
\label{subsec:Matrix_decompositions}
Below, Theorem~\ref{thrm:rectangular_decomp} states our main matrix decomposition in general form. Remark~\ref{rem:intuition_matrix_decomposition} subsequently provides accompanying discussion and is designed to offer a more intuitive, high-level explanation of the decomposition considerations presented here. The formal procedure for deriving Theorem~\ref{thrm:rectangular_decomp} is based on geometric considerations presented in Section~\ref{sec:rect_matrix_decomp}.
\begin{theorem}[Procrustean matrix decomposition]
	\label{thrm:rectangular_decomp}
	In the setting of Sections~\ref{subsec:SSG&Procrustes}~and~\ref{subsec:SVD_perturb_framework}, if $\hat{X}$ has rank at least $r$, then $\hat{U} - U W_{U} \in \R^{p_{1} \times r}$ admits the decomposition 
	\begin{align}
		\hat{U} - U W_{U}
		&= (I-UU^{\top}) E V W_{V} \hat{\Sigma}^{-1} \label{eq:rect_decomp_eq1}\\
		&+ (I-UU^{\top})E(\hat{V}-VW_{V})\hat{\Sigma}^{-1}\label{eq:rect_decomp_eq2}\\
		&+ (I-UU^{\top})X(\hat{V}-V V^{\top} \hat{V})\hat{\Sigma}^{-1}\label{eq:rect_decomp_eq3}\\
		&+ U(U^{\top}\hat{U}-W_{U}).\label{eq:rect_decomp_eq4}
	\end{align}
	This decomposition still holds when replacing the $r \times r$ orthogonal matrices $W_{U}$ and $W_{V}$ with any real $r \times r$ matrices $T_{1}$ and $T_{2}$, respectively. The analogous decomposition for $\hat{V}-VW_{V}$ is given by replacing $U, \hat{U}, V, \hat{V}, E, X, W_{U}$, and $W_{V}$ above with $V, \hat{V}, U, \hat{U}, E^{\top}, X^{\top}, W_{V}$, and $W_{U}$, respectively.
\end{theorem}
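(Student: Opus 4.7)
The plan is to derive the identity directly from the defining relations of the singular value decomposition and then organize the resulting terms into the four claimed summands. The starting point is to invert the rank-$r$ relation $\hat{X}\hat{V} = \hat{U}\hat{\Sigma}$; since $\hat{X}$ has rank at least $r$ the matrix $\hat{\Sigma}\in\R^{r\times r}$ is invertible, so $\hat{U} = \hat{X}\hat{V}\hat{\Sigma}^{-1} = (X+E)\hat{V}\hat{\Sigma}^{-1}$. The hard part is not really a computation but rather recognizing the correct orthogonal splitting that produces terms aligned with $U$ versus aligned with $U_{\perp}$, together with a well-chosen zero to insert.

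First, I would insert the orthogonal projection decomposition $I = UU^{\top} + (I-UU^{\top})$ on the left of $\hat{U}$ to obtain
\begin{equation*}
\hat{U} - UW_{U} = U\bigl(U^{\top}\hat{U} - W_{U}\bigr) + (I-UU^{\top})(X+E)\hat{V}\hat{\Sigma}^{-1},
\end{equation*}
which already isolates line \eqref{eq:rect_decomp_eq4}. Next, splitting $(X+E)\hat{V}$ into an $E$-piece and an $X$-piece, and writing $\hat{V} = VW_{V} + (\hat{V}-VW_{V})$ inside the $E$-piece, immediately produces lines \eqref{eq:rect_decomp_eq1} and \eqref{eq:rect_decomp_eq2}.

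It remains to match $(I-UU^{\top})X\hat{V}\hat{\Sigma}^{-1}$ with \eqref{eq:rect_decomp_eq3}. Here I would use the key algebraic observation that $XV = U\Sigma$ from the singular value decomposition of $X$, so $(I-UU^{\top})XV = (I-UU^{\top})U\Sigma = 0$. Consequently, one may freely subtract $(I-UU^{\top})XVV^{\top}\hat{V}\hat{\Sigma}^{-1} = 0$ from $(I-UU^{\top})X\hat{V}\hat{\Sigma}^{-1}$ to rewrite it as $(I-UU^{\top})X(\hat{V}-VV^{\top}\hat{V})\hat{\Sigma}^{-1}$, which is exactly \eqref{eq:rect_decomp_eq3}. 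Combining the four pieces yields the claimed decomposition.

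Finally, two observations finish the statement. On one hand, $W_{U}$ and $W_{V}$ enter only through additive-cancellation steps: $W_{U}$ is introduced and removed within line \eqref{eq:rect_decomp_eq4} via $U W_{U} = UW_{U}$, while $W_{V}$ merely splits $\hat{V}$ between \eqref{eq:rect_decomp_eq1} and \eqref{eq:rect_decomp_eq2} in a manner whose sum is $(I-UU^{\top})E\hat{V}\hat{\Sigma}^{-1}$ regardless of what matrix replaces $W_{V}$. Hence the identity remains valid when $W_{U},W_{V}$ are replaced by arbitrary $T_{1},T_{2}\in\R^{r\times r}$. On the other hand, the analogous decomposition of $\hat{V}-VW_{V}$ is obtained by applying the same argument to $\hat{X}^{\top} = X^{\top} + E^{\top}$, using $\hat{V} = \hat{X}^{\top}\hat{U}\hat{\Sigma}^{-1}$ and $X^{\top}U = V\Sigma$ in place of $\hat{X}\hat{V}=\hat{U}\hat{\Sigma}$ and $XV=U\Sigma$; this swaps the roles of $(U,\hat{U},W_{U})$ and $(V,\hat{V},W_{V})$ and replaces $E,X$ by their transposes, giving the stated dual identity.
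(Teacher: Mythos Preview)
Your proof is correct and follows essentially the same approach as the paper: both start from $\hat{U}=\hat{X}\hat{V}\hat{\Sigma}^{-1}$, split via $I=UU^{\top}+(I-UU^{\top})$, separate the $E$ and $X$ contributions, and use the vanishing of $(I-UU^{\top})XV$ (the paper phrases this equivalently as $(I-UU^{\top})X=X(I-VV^{\top})$) to rewrite the $X$-term. Your explicit justification that $W_{U},W_{V}$ enter only through additive cancellations, and hence may be replaced by arbitrary $T_{1},T_{2}$, is a nice addition that the paper leaves implicit.
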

\begin{remark}[Intuition for Theorem~\ref{thrm:rectangular_decomp}]
	\label{rem:intuition_matrix_decomposition}
	The decomposition presented in Theorem~\ref{thrm:rectangular_decomp} can be loosely motivated in the following way. When $X$ and $\hat{X}$ have rank at least $r$, then by Section~\ref{subsec:SVD_perturb_framework}, $U \equiv XV\Sigma^{-1}$ and $\hat{U} \equiv \hat{X}\hat{V}\hat{\Sigma}^{-1} = X\hat{V}\hat{\Sigma}^{-1} + E\hat{V}\hat{\Sigma}^{-1}$. It is thus conceivable that the difference between $U$ and $\hat{U}$ behaves to leading order as $EV\Sigma^{-1}$ (modulo proper orthogonal transformation) under suitable perturbation and structural assumptions. Indeed, we repeatedly observe such first-order behavior via the matrix term $(I-UU^{\top}) E V W_{V} \hat{\Sigma}^{-1}$ when $\|U\|_{\ttinf} \ll 1$.
	
	For the purpose of obtaining upper bounds, passing between $\Sigma^{-1}$ and $\hat{\Sigma}^{-1}$ amounts to transitioning between $\sigma_{r}(X)$ and $\sigma_{r}(\hat{X})$; this can be done successfully via Weyl's inequality \cite{Bhatia--1997} provided the perturbation $E$ is suitably small in norm relative to $\sigma_{r}(X)$.
	
	Subsequent results in Section~\ref{subsec:Perturbation_theorems} will demonstrate that Lines~(\ref{eq:rect_decomp_eq2})--(\ref{eq:rect_decomp_eq4}) amount to circumstance-driven residual and approximation error terms. Namely, with respect to the two-to-infinity norm:
	\begin{itemize}
		\item Line~(\ref{eq:rect_decomp_eq2}) can be much smaller than $\|\sin\Theta(\hat{V},V)\|_{2}$ as a function of the relative magnitudes of $E$ and $\hat{\Sigma}^{-1}$.
		\item Line~(\ref{eq:rect_decomp_eq3}) can be much smaller than $\|\sin\Theta(\hat{V},V)\|_{2}$ as a function of the multiplicative singular value gap $\sigma_{r+1}(X)/\sigma_{r}(\hat{X})$.
		\item Line~(\ref{eq:rect_decomp_eq4}) can be much smaller than $\|\sin\Theta(\hat{U},U)\|_{2}^{2}$ as a function of $\|U\|_{\ttinf}$, specifically when $\|U\|_{\ttinf} \ll 1$.
	\end{itemize}
\end{remark}
Theorem~\ref{thrm:rectangular_decomp} can be rewritten in terms of the spectral matrix decomposition when $X$ and $E$ are both symmetric matrices. For ease of reference, we state this special case in the form of a corollary.
\begin{corollary}
	\label{cor:symmetric_decomp}
	Let $X, E \in \R^{p \times p}$ be symmetric matrices. Rephrase Section~\ref{subsec:SVD_perturb_framework} to hold for the spectral matrix decomposition in terms of the eigenvalues and eigenvectors of $X$ and $\hat{X}$. Provided $\hat{X}$ has rank at least $r$, then $\hat{U} - U W_{U} \in \R^{p \times r}$ admits the decomposition
	\begin{align}
		\hat{U} - U W_{U}
		&= (I-UU^{\top}) E U W_{U} \hat{\Sigma}^{-1}\\
		&+ (I-UU^{\top})E(\hat{U}-UW_{U})\hat{\Sigma}^{-1}\nonumber\\
		&+ (I-UU^{\top})X(\hat{U}-UU^{\top}\hat{U})\hat{\Sigma}^{-1}\nonumber\\
		&+ U(U^{\top}\hat{U}-W_{U}).\nonumber
	\end{align}
\end{corollary}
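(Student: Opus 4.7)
The plan is to obtain Corollary~\ref{cor:symmetric_decomp} as an immediate specialization of Theorem~\ref{thrm:rectangular_decomp} to the symmetric case. When $X,E \in \R^{p \times p}$ are symmetric, the spectral decompositions $X = U\Sigma U^{\top} + U_{\perp}\Sigma_{\perp}U_{\perp}^{\top}$ and $\hat{X} = \hat{U}\hat{\Sigma}\hat{U}^{\top} + \hat{U}_{\perp}\hat{\Sigma}_{\perp}\hat{U}_{\perp}^{\top}$ replace the singular value decompositions of Section~\ref{subsec:SVD_perturb_framework}, with the diagonal entries of $\Sigma$ and $\hat{\Sigma}$ now interpreted as (possibly signed) eigenvalues. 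Crucially, the left and right ``singular'' factors coincide, so one may identify $V \equiv U$ and $\hat{V} \equiv \hat{U}$ throughout the rephrased setup.

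With these identifications in hand, the Procrustes problem Eq.~(\ref{eq:UhatUProcrustesForArbNorms}) for $(\hat{V},V)$ is literally the one for $(\hat{U},U)$, so $W_{V} = W_{U}$. Substituting $V \mapsto U$, $\hat{V} \mapsto \hat{U}$, and $W_{V} \mapsto W_{U}$ into lines~(\ref{eq:rect_decomp_eq1})--(\ref{eq:rect_decomp_eq4}) of Theorem~\ref{thrm:rectangular_decomp} then reproduces verbatim the four terms in the statement of Corollary~\ref{cor:symmetric_decomp}.

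The only bookkeeping subtlety, and the place where I expect to have to argue rather than just substitute, is that eigenvalues of a symmetric matrix may be negative, whereas Theorem~\ref{thrm:rectangular_decomp} is nominally stated for the SVD (where the diagonal blocks are nonnegative). This is not a genuine obstacle: Theorem~\ref{thrm:rectangular_decomp} is a purely algebraic identity and, as explicitly remarked in its statement, remains valid when $W_{U}$ and $W_{V}$ are replaced by arbitrary real $r \times r$ matrices. Its derivation uses only the invertibility of the top blocks $\Sigma$ and $\hat{\Sigma}$ together with the block-orthogonality relation $UU^{\top} + U_{\perp}U_{\perp}^{\top} = I$, and neither positivity of the diagonal entries nor any particular ordering convention is invoked. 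Alternatively, one can absorb signs into a diagonal $\{\pm 1\}$ matrix $S \in \R^{r \times r}$ and realize the spectral decomposition as a legitimate SVD $X = U|\Sigma|(US)^{\top}$; applying Theorem~\ref{thrm:rectangular_decomp} in that form and tracking the cancellations $S S^{\top} = I$ in each of the four terms then recovers the identity. Either route confirms that the corollary is essentially a change-of-notation statement once the symmetry-induced identification $V = U$, $\hat{V} = \hat{U}$ has been made.
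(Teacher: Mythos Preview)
Your proposal is correct and follows the same approach as the paper: the paper simply remarks in the proof of Theorem~\ref{thrm:rectangular_decomp} that Corollary~\ref{cor:symmetric_decomp} is ``evident given that $U^{\top}U$ and $V^{\top}V$ are both simply the identity matrix,'' i.e., it is obtained by the substitution $V\equiv U$, $\hat V\equiv\hat U$, $W_V\equiv W_U$ you describe. Your additional care about signed eigenvalues is more than the paper offers; the first of your two routes (observing that the derivation of Theorem~\ref{thrm:rectangular_decomp} is purely algebraic and uses only invertibility of $\hat\Sigma$ and the identity $(I-UU^{\top})X = X(I-VV^{\top})$, both of which hold verbatim for the spectral decomposition) is the clean justification.
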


\begin{remark}[The orthogonal matrix $W_{U}$]
	\label{rem:orthogonal_matrix_WU}
	To reiterate, $W_{U}$ depends upon the perturbed quantity $\hat{U}$ which in turn depends upon the perturbation $E$. Consequently, $W_{U}$ is unknown (resp.~random) when $E$ is assumed unknown (resp.~random). This paper makes no distinct singular value (resp.~eigenvalue) assumption, so in general the quantity $\hat{U}$ alone cannot hope to recover $U$ in the presence of singular value multiplicity. Indeed, $\hat{U}$ can only be viewed as an estimate of $U$ up to an orthogonal transformation, and our specific choice of $W_{U}$ is based upon the aforementioned Procrustes-based considerations. We note that statistical inference methodologies and applications are often either invariant under or equivalent modulo orthogonal transformations as a source of non-identifiability. For example, $K$-means clustering applied to the rows of $U$ in Euclidean space is equivalent to clustering the rows of the matrix $UW_{U}$.
\end{remark}

It will subsequently prove convenient to work with the following modified versions of Theorem~\ref{thrm:rectangular_decomp} which are stated below as corollaries.

\begin{corollary}
	\label{cor:rectangular_rewritten}
	The decomposition in Theorem~\ref{thrm:rectangular_decomp} can be rewritten as
	\begin{align}
		\hat{U}-UW_{U} &=  (I-UU^{\top})E(VV^{\top})VW_{V}\hat{\Sigma}^{-1}\\
		&+ (I-UU^{\top})(E+X)(\hat{V}-VW_{V})\hat{\Sigma}^{-1}\nonumber\\
		&+ U(U^{\top}\hat{U}-W_{U}).\nonumber
	\end{align}
\end{corollary}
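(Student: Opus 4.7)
The plan is to derive the rewritten decomposition directly from Theorem~\ref{thrm:rectangular_decomp} by performing purely algebraic manipulations, with the sole non-trivial ingredient being a consequence of the SVD structure of $X$. First I would record the key observation that $(I-UU^{\top})XV = 0$. This follows immediately from the block SVD $X = U\Sigma V^{\top} + U_{\perp}\Sigma_{\perp}V_{\perp}^{\top}$ in Section~\ref{subsec:SVD_perturb_framework}: left-multiplication by $(I-UU^{\top})$ annihilates the $U\Sigma V^{\top}$ summand and leaves $U_{\perp}\Sigma_{\perp}V_{\perp}^{\top}$, so right-multiplying by $V$ produces $U_{\perp}\Sigma_{\perp}(V_{\perp}^{\top}V) = 0$ by the orthogonality $V_{\perp}^{\top}V = 0$.

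Second, I would apply this identity to the $X$-term in line~(\ref{eq:rect_decomp_eq3}) of Theorem~\ref{thrm:rectangular_decomp}. Since $(I-UU^{\top})XV = 0$, both $(I-UU^{\top})XVV^{\top}\hat{V}\hat{\Sigma}^{-1}$ and $(I-UU^{\top})XVW_{V}\hat{\Sigma}^{-1}$ vanish, so
\begin{equation*}
    (I-UU^{\top})X(\hat{V}-VV^{\top}\hat{V})\hat{\Sigma}^{-1} = (I-UU^{\top})X\hat{V}\hat{\Sigma}^{-1} = (I-UU^{\top})X(\hat{V}-VW_{V})\hat{\Sigma}^{-1}.
\end{equation*}
Having rewritten line~(\ref{eq:rect_decomp_eq3}) in this form, I would then combine it with line~(\ref{eq:rect_decomp_eq2}): since both terms share the right factor $(\hat{V}-VW_{V})\hat{\Sigma}^{-1}$ with the same left projection $(I-UU^{\top})$, they coalesce into the single term $(I-UU^{\top})(E+X)(\hat{V}-VW_{V})\hat{\Sigma}^{-1}$, which is the second line of the corollary.

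Third, I would rewrite line~(\ref{eq:rect_decomp_eq1}) trivially by inserting $VV^{\top}$: because $V \in \mathbb{O}_{p_{2},r}$ satisfies $V^{\top}V = I_{r}$, we have $VV^{\top}V = V$, and hence $(I-UU^{\top})EVW_{V}\hat{\Sigma}^{-1} = (I-UU^{\top})E(VV^{\top})VW_{V}\hat{\Sigma}^{-1}$. Line~(\ref{eq:rect_decomp_eq4}) is already in the desired form $U(U^{\top}\hat{U}-W_{U})$ and requires no modification. The three steps together yield the stated rewriting.

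Essentially no obstacle arises here; the only substantive content is the orthogonality identity $(I-UU^{\top})XV = 0$, and everything else is bookkeeping. The reason to present this variant is not mathematical subtlety but rather downstream utility: the projection $VV^{\top}$ exposed in the first line isolates the ``signal-aligned'' portion of $E$, and the coalesced second line facilitates applying a single norm bound via $\|\sin\Theta(\hat{V},V)\|$ against the combined operator $E + X$, both of which will be convenient in the perturbation theorems of Section~\ref{subsec:Perturbation_theorems}.
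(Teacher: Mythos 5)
Your proof is correct and follows essentially the same route as the paper: the paper's proof of Theorem~\ref{thrm:rectangular_decomp} already records the decomposition of line~(\ref{eq:rect_decomp_eq3}) into a $(\hat V - VW_V)$ piece plus a piece proportional to $(I-UU^{\top})XV$ which vanishes, and then remarks that the corollary is evident from $V^{\top}V = I_r$. Your write-up just makes those two steps (the vanishing of $(I-UU^{\top})XV$ and the trivial insertion of $VV^{\top}$) fully explicit.
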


\begin{corollary}
	\label{cor:rectangular_expanded}
	Corollary~\ref{cor:rectangular_rewritten} can be equivalently written as
	\begin{align}
		\hat{U}-UW_{U}
		&= (U_{\perp}U_{\perp}^{\top})E(VV^{\top})VW_{V}\hat{\Sigma}^{-1}\\ 
		&+ (U_{\perp}U_{\perp}^{\top})E(VV^{\top})V(V^{\top}\hat{V}-W_{V})\hat{\Sigma}^{-1}\nonumber\\
		&+ (U_{\perp}U_{\perp}^{\top})E(V_{\perp}V_{\perp}^{\top})(\hat{V}-VV^{\top}\hat{V})\hat{\Sigma}^{-1}\nonumber\\
		&+ (U_{\perp}U_{\perp}^{\top})X(V_{\perp}V_{\perp}^{\top})(\hat{V}-VV^{\top}\hat{V})\hat{\Sigma}^{-1}\nonumber\\
		&+ U(U^{\top}\hat{U}-W_{U}).\nonumber
	\end{align}
\end{corollary}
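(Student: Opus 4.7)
The plan is to derive Corollary~\ref{cor:rectangular_expanded} directly from Corollary~\ref{cor:rectangular_rewritten} by inserting resolutions of the identity and exploiting orthogonality of the singular vectors. Two algebraic facts drive everything: first, the completeness relations $I - UU^{\top} = U_{\perp}U_{\perp}^{\top}$ and $I = VV^{\top} + V_{\perp}V_{\perp}^{\top}$ from $[U|U_{\perp}] \in \mathbb{O}_{p_1}$ and $[V|V_{\perp}] \in \mathbb{O}_{p_2}$; second, the identity $XV = U\Sigma$ (so in particular $(I-UU^{\top})XV = 0$), which follows from the block singular value decomposition of $X$ given in Section~\ref{subsec:SVD_perturb_framework} together with $V^{\top}V = I$ and $V_{\perp}^{\top}V = 0$.

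Concretely, I would first rewrite every $(I-UU^{\top})$ appearing in Corollary~\ref{cor:rectangular_rewritten} as $U_{\perp}U_{\perp}^{\top}$. This converts the first term of Corollary~\ref{cor:rectangular_rewritten} into the first term of Corollary~\ref{cor:rectangular_expanded} and leaves $U(U^{\top}\hat{U}-W_{U})$ unchanged as the final term. The real work is in splitting the middle term $(I-UU^{\top})(E+X)(\hat{V}-VW_{V})\hat{\Sigma}^{-1}$. I would insert $I = VV^{\top} + V_{\perp}V_{\perp}^{\top}$ to the left of $(\hat{V}-VW_{V})$ and simplify the two resulting pieces using orthogonality: on the $V$-piece, $VV^{\top}(\hat{V}-VW_{V}) = V(V^{\top}\hat{V}-W_{V})$ since $V^{\top}V = I$; on the $V_{\perp}$-piece, $V_{\perp}V_{\perp}^{\top}(\hat{V}-VW_{V}) = V_{\perp}V_{\perp}^{\top}\hat{V} = \hat{V}-VV^{\top}\hat{V}$ since $V_{\perp}^{\top}V = 0$, and this can be rewritten as $V_{\perp}V_{\perp}^{\top}(\hat{V}-VV^{\top}\hat{V})$ by idempotence of the projector.

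Next, I would handle the $(E+X)$ factor on each piece separately. On the $V$-piece, since $(I-UU^{\top})XV = (I-UU^{\top})U\Sigma = 0$, the $X$-contribution vanishes and I obtain $(U_{\perp}U_{\perp}^{\top})E(VV^{\top})V(V^{\top}\hat{V}-W_{V})\hat{\Sigma}^{-1}$ (inserting a free $VV^{\top}$ next to $V$ since $VV^{\top}V = V$); this is precisely the second term of Corollary~\ref{cor:rectangular_expanded}. On the $V_{\perp}$-piece, writing $(E+X) = E + X$ gives exactly the third and fourth terms of Corollary~\ref{cor:rectangular_expanded}. Collecting all five pieces recovers the claimed decomposition.

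No obstacle of substance arises—the identity is purely algebraic bookkeeping, and the only subtle checks are (i) that $(I-UU^{\top})XV$ indeed vanishes, which is where the singular value decomposition of $X$ is used, and (ii) that the idempotence and orthogonality of the projectors $VV^{\top}$ and $V_{\perp}V_{\perp}^{\top}$ are applied consistently so that the factor $VV^{\top}$ in the first two terms and the factor $V_{\perp}V_{\perp}^{\top}$ in the third and fourth terms appear naturally. Once these are in place, the equivalence of the two decompositions is immediate.
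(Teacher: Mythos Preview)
Your proposal is correct and matches the paper's approach: the paper simply states that Corollary~\ref{cor:rectangular_expanded} is obtained from Corollary~\ref{cor:rectangular_rewritten} by ``additional straightforward algebraic manipulations,'' and what you have written is precisely a careful execution of those manipulations. Your two key ingredients---the projector identities $I-UU^{\top}=U_{\perp}U_{\perp}^{\top}$, $I=VV^{\top}+V_{\perp}V_{\perp}^{\top}$ and the vanishing of $(I-UU^{\top})XV$---are exactly what is needed, and your term-by-term accounting is accurate.
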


For both Corollary~\ref{cor:rectangular_rewritten}~and~\ref{cor:rectangular_expanded}, the first term following the equality sign in each display equation is shown to be the leading order term of interest in practice. This point shall be made more precise and quantitative below.

\subsection{General perturbation theorems}
\label{subsec:Perturbation_theorems}
This section presents a collection of perturbation theorems derived via a unified methodology that combines Theorem~\ref{thrm:rectangular_decomp}, its variants, the two-to-infinity norm machinery in Section~\ref{sec:Tech_tools_2infty}, and the geometric observations in Section~\ref{sec:Geometric_bounds}. We bound $\hat{U}-UW_{U}$, while similar bounds for $\hat{V}-VW_{V}$ hold under the appropriate modifications detailed in Theorem~\ref{thrm:rectangular_decomp}.

\begin{theorem}[Baseline two-to-infinity norm bound]
	\label{thrm:baselineProcrustesPerturbBound}
	Provided $\sigma_{r}(X)>\sigma_{r+1}(X) \ge 0$ and $\sigma_{r}(X) \ge 2\|E\|_{2}$, then
	\begin{align}
	\|\hat{U}-UW_{U}\|_{\ttinf}
	&\le
	2\left(\frac{\|(U_{\perp}U_{\perp}^{\top})E(VV^{\top})\|_{\ttinf}}{\sigma_{r}(X)}\right)\\
	&+ 2\left(\frac{\|(U_{\perp}U_{\perp}^{\top})E(V_{\perp}V_{\perp}^{\top})\|_{\ttinf}}{\sigma_{r}(X)}\right)\|\sin\Theta(\hat{V},V)\|_{2}\nonumber\\
	&+ 2\left(\frac{\|(U_{\perp}U_{\perp}^{\top})X(V_{\perp}V_{\perp}^{\top})\|_{\ttinf}}{\sigma_{r}(X)}\right)\|\sin\Theta(\hat{V},V)\|_{2}\nonumber\\
	&+ \|\sin\Theta(\hat{U},U)\|_{2}^{2}\|U\|_{\ttinf}.\nonumber
	\end{align}	
\end{theorem}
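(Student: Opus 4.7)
The plan is to start from Corollary~\ref{cor:rectangular_expanded}, which expresses $\hat U - U W_U$ as a sum of five terms. The first key observation is that the first two terms combine nicely: since $(VV^\top)V = V$, we have
\[
(U_\perp U_\perp^\top)E(VV^\top)V W_V \hat\Sigma^{-1} + (U_\perp U_\perp^\top)E(VV^\top)V(V^\top\hat V - W_V)\hat\Sigma^{-1} = (U_\perp U_\perp^\top)E V V^\top \hat V \hat\Sigma^{-1},
\]
so the $W_V$-dependence vanishes. After this collapse, apply the triangle inequality for $\|\cdot\|_{\ttinf}$ to the remaining four summands, reducing the problem to bounding each individual piece.

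Next, I would systematically apply the ``constrained'' sub-multiplicativity property $\|AB\|_{\ttinf} \le \|A\|_{\ttinf}\|B\|_2$ (from the referenced Proposition~\ref{prop:2infty_relations}), pulling spectral-norm factors off on the right. The common factor $\|\hat\Sigma^{-1}\|_2 = 1/\sigma_r(\hat X)$ is controlled by Weyl's inequality: the hypothesis $\sigma_r(X)\ge 2\|E\|_2$ yields $\sigma_r(\hat X) \ge \sigma_r(X) - \|E\|_2 \ge \sigma_r(X)/2$, hence $\|\hat\Sigma^{-1}\|_2 \le 2/\sigma_r(X)$, which accounts for the uniform factor of $2$ in all three fraction terms. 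For the collapsed first term, bound $\|(U_\perp U_\perp^\top)EVV^\top \hat V \hat\Sigma^{-1}\|_{\ttinf} \le \|(U_\perp U_\perp^\top)E(VV^\top)\|_{\ttinf}\cdot\|\hat V\|_2\cdot\|\hat\Sigma^{-1}\|_2$, noting $\|\hat V\|_2 = 1$. For terms (\ref{eq:rect_decomp_eq3})-type (lines three and four in the expanded corollary), use $\|\hat V - V V^\top \hat V\|_2 = \|(I-VV^\top)\hat V\|_2 = \|\sin\Theta(\hat V, V)\|_2$, which is the standard identification of $\sin\Theta$ distance with the spectral norm of the projection residual.

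Finally, the fifth term $U(U^\top\hat U - W_U)$ is handled by $\|U(U^\top\hat U - W_U)\|_{\ttinf} \le \|U\|_{\ttinf}\|U^\top\hat U - W_U\|_2$, and the task becomes the scalar inequality $\|U^\top\hat U - W_U\|_2 \le \|\sin\Theta(\hat U, U)\|_2^2$. Writing the SVD $U^\top\hat U = W_1 \Sigma_U W_2^\top$ so that $W_U = W_1 W_2^\top$ (by definition of the Frobenius-optimal Procrustes solution recalled in Section~\ref{subsec:SSG&Procrustes}), we get $U^\top\hat U - W_U = W_1(\Sigma_U - I)W_2^\top$ and thus $\|U^\top\hat U - W_U\|_2 = \max_i (1 - \cos\theta_i) \le \max_i \sin^2\theta_i$, using $1-\cos\theta \le 1-\cos^2\theta = \sin^2\theta$ for canonical angles in $[0,\pi/2]$.

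The only non-routine step is the algebraic collapse of the first two summands into a single term whose prefactor matches $\|(U_\perp U_\perp^\top)E(VV^\top)\|_{\ttinf}$; this is what explains why the bound features $\|(U_\perp U_\perp^\top)E(VV^\top)\|_{\ttinf}$ without a $\|\sin\Theta(\hat V, V)\|_2$ multiplier, and is the only place where the specific structure of the Procrustean decomposition (as opposed to a cruder triangle-inequality application) is essential. Everything else is bookkeeping: triangle inequality, sub-multiplicativity of $\|\cdot\|_{\ttinf}$ against $\|\cdot\|_2$, Weyl, and the Procrustes-to-$\sin\Theta$ scalar bound.
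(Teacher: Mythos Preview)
Your proposal is correct and follows the same route the paper takes: it invokes Corollary~\ref{cor:rectangular_expanded}, uses Weyl's inequality to convert $\hat\Sigma^{-1}$ into the factor $2/\sigma_r(X)$, applies Proposition~\ref{prop:2infty_relations} to split off spectral-norm factors, and uses Lemma~\ref{lemma:geom_resid_bounds} for both $\|\hat V - VV^\top\hat V\|_2 = \|\sin\Theta(\hat V,V)\|_2$ and $\|U^\top\hat U - W_U\|_2 \le \|\sin\Theta(\hat U,U)\|_2^2$. Your explicit observation that the first two summands of Corollary~\ref{cor:rectangular_expanded} collapse to $(U_\perp U_\perp^\top)E(VV^\top)\hat V\hat\Sigma^{-1}$ is exactly what is needed to land on the stated first term with constant $2$ (the paper's proof elides this but it is implicit in ``follows from Corollary~\ref{cor:rectangular_expanded}'').
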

Let $C_{X,U}$ and $C_{X,V}$ denote upper bounds on the quantities $\|(U_{\perp}U_{\perp}^{\top})X\|_{\infty}$ and $\|(V_{\perp}V_{\perp}^{\top})X^{ \top}\|_{\infty}$, respectively, and define $C_{E,U},C_{E,V}$ analogously. Theorem~\ref{thrm:General_rectangular_bounds} provides a uniform perturbation bound for $\|\hat{U}-UW_{U}\|_{2\rightarrow\infty}$ and $\|\hat{V}-VW_{V}\|_{2\rightarrow\infty}$. When $\rmRank(X)=r$, Corollary~\ref{cor:reFanThrmRect} presents a weaker but simpler version of the bound in Theorem~\ref{thrm:General_rectangular_bounds}.
\begin{theorem}[Uniform perturbation bound for rectangular matrices]
	\label{thrm:General_rectangular_bounds}
	Suppose $\sigma_{r}(X)>\sigma_{r+1}(X)>0$ and that
	\begin{equation*}
		\sigma_{r}(X) \ge \textnormal{max}\{2\|E\|_{2},(2/\alpha)C_{E,U},(2/\alpha^{\prime})C_{E,V},(2/\beta)C_{X,U},(2/\beta^{\prime})C_{X,V}\}
	\end{equation*}
	for constants $0<\alpha,\alpha^{\prime},\beta,\beta^{\prime}<1$ such that $\delta:=(\alpha+\beta)(\alpha^{\prime}+\beta^{\prime})<1$. Then,
	\begin{align}
		(1-\delta)\|\hat{U}-UW_{U}\|_{2\rightarrow\infty}
		&\le 2\left(\frac{\|(U_{\perp}U_{\perp}^{\top})E(VV^{\top})\|_{\ttinf}}{\sigma_{r}(X)}\right)\\
		&+ 2\left(\frac{\|(V_{\perp}V_{\perp}^{\top})E^{\top}(UU^{\top})\|_{\ttinf}}{\sigma_{r}(X)}\right)\nonumber\\
		&+ \|\sin\Theta(\hat{U},U)\|_{2}^{2}\|U\|_{\ttinf}\nonumber\\
		&+ \|\sin\Theta(\hat{V},V)\|_{2}^{2}\|V\|_{\ttinf}.\nonumber
	\end{align}
	If $\textnormal{rank}(X)=r$ so that $\sigma_{r+1}(X)=0$, then the above bound holds for $\delta:=\alpha\times\alpha^{\prime}<1$ under the weaker assumption that 
	\begin{equation*}
		\sigma_{r}(X) \ge \textnormal{max}\{2\|E\|_{2},(2/\alpha)C_{E,U},(2/\alpha^{\prime})C_{E,V}\}.
	\end{equation*}
\end{theorem}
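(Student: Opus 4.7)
The plan is to apply the three-term Procrustean decomposition of Corollary~\ref{cor:rectangular_rewritten} to $\hat{U}-UW_U$, bound each of its three summands in $\|\cdot\|_{\ttinf}$ using the constrained sub-multiplicative properties of the two-to-infinity norm together with Weyl's inequality, and then exploit the symmetry of Theorem~\ref{thrm:rectangular_decomp} to obtain the analogous bound on $\hat{V}-VW_V$. The two scalar bounds will constitute a coupled system whose inversion produces the $(1-\delta)$ factor on the left-hand side.

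For the leading summand $(I-UU^{\top})E(VV^{\top})VW_V\hat{\Sigma}^{-1}$, I would apply the variant $\|AB\|_{\ttinf} \leq \|A\|_{\ttinf}\|B\|_2$ together with the Weyl estimate $\sigma_r(\hat{X}) \geq \sigma_r(X)-\|E\|_2 \geq \sigma_r(X)/2$ (valid by $\sigma_r(X) \geq 2\|E\|_2$), yielding a contribution of at most $2\|(U_\perp U_\perp^\top)E(VV^\top)\|_{\ttinf}/\sigma_r(X)$. For the residual summand $(I-UU^{\top})(E+X)(\hat{V}-VW_V)\hat{\Sigma}^{-1}$, I would instead apply the variant $\|AB\|_{\ttinf} \leq \|A\|_{\infty}\|B\|_{\ttinf}$, combine this with the triangle inequality $\|(I-UU^\top)(E+X)\|_\infty \leq C_{E,U}+C_{X,U}$, and use the hypotheses $C_{E,U}/\sigma_r(X) \leq \alpha/2$ and $C_{X,U}/\sigma_r(X) \leq \beta/2$ to extract a contribution of at most $(\alpha+\beta)\|\hat{V}-VW_V\|_{\ttinf}$. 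For the projection summand $U(U^{\top}\hat{U}-W_U)$, the Frobenius-optimal Procrustes identity $W_U = W_1 W_2^\top$ derived from $U^\top\hat{U} = W_1\Sigma_U W_2^\top$ implies that the singular values of $U^\top\hat{U}-W_U$ are of the form $1-\cos\theta_i \leq \sin^2\theta_i$, giving the contribution $\|U\|_{\ttinf}\|\sin\Theta(\hat{U},U)\|_2^2$.

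Summing the three bounds produces a scalar inequality of the form $a \leq A + (\alpha+\beta)b + p$, using the abbreviations $a := \|\hat{U}-UW_U\|_{\ttinf}$, $b := \|\hat{V}-VW_V\|_{\ttinf}$, $A := 2\|(U_\perp U_\perp^\top)E(VV^\top)\|_{\ttinf}/\sigma_r(X)$, and $p := \|U\|_{\ttinf}\|\sin\Theta(\hat{U},U)\|_2^2$. By the symmetric version of Theorem~\ref{thrm:rectangular_decomp} (swap $U\leftrightarrow V$ and transpose $E,X$), the identical procedure yields $b \leq B + (\alpha'+\beta')a + q$ with the corresponding quantities on the right. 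Substituting the second inequality into the first and collecting the coefficient of $a$ produces the product $(\alpha+\beta)(\alpha'+\beta') = \delta < 1$ as the sole self-referencing factor, and rearranging collapses the system to $(1-\delta)a \leq A + B + p + q$ up to absorbable prefactors. For the rank-$r$ case $\sigma_{r+1}(X)=0$, the factorization $X = U\Sigma V^\top$ forces $(U_\perp U_\perp^\top)X = U_\perp\Sigma_\perp V_\perp^\top = 0$ and analogously $(V_\perp V_\perp^\top)X^\top = 0$, so $C_{X,U} = C_{X,V} = 0$, the hypotheses on $\beta,\beta'$ become vacuous, and $\delta$ simplifies to $\alpha\alpha'$. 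The main obstacle will be the careful bookkeeping of the residual summand and verifying that the coefficients of the self-referencing terms assemble precisely into $\delta$ strictly below one, so that the coupled system can be inverted into a finite bound matching the stated form.
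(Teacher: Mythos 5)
Your proposal follows essentially the same route as the paper's proof: you start from Corollary~\ref{cor:rectangular_rewritten}, bound the three summands using Proposition~\ref{prop:2infty_relations} and Lemma~\ref{lemma:geom_resid_bounds} together with the Weyl estimate $\sigma_r(\hat{X})\ge\sigma_r(X)/2$, derive the symmetric inequality for $\hat{V}-VW_V$, and invert the resulting coupled system to produce the $(1-\delta)$ prefactor. The only cosmetic difference is that you summarize as ``up to absorbable prefactors'' the step where the paper explicitly retains the coefficients $(\alpha+\beta)$ multiplying the $B$- and $q$-terms before invoking $(\alpha+\beta)<1$ to absorb them; otherwise the decomposition, the term-by-term bounds, and the treatment of the low-rank case with $C_{X,U}=C_{X,V}=0$ and $\delta=\alpha\alpha'$ all coincide with the paper.
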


\begin{corollary}[Uniform perturbation bound for low rank matrices]
	\label{cor:reFanThrmRect}
	Suppose $\sigma_{r}(X)>\sigma_{r+1}(X)=0$ and that
	\begin{equation*}
		\sigma_{r}(X) \ge \textnormal{max}\{2\|E\|_{2},(2/\alpha)C_{E,U},(2/\alpha^{\prime})C_{E,V}\}
	\end{equation*}
	for some constants $0<\alpha,\alpha^{\prime}<1$ such that $\delta:=\alpha\times\alpha^{\prime}<1$. Then
	\begin{equation}
		(1-\delta)\|\hat{U}-UW_{U}\|_{\ttinf}
		\le 12
		\times
		\underset{\eta \in \{1, \infty\}}{\textnormal{max}}\left\{\frac{\|E\|_{\eta}}{\sigma_{r}(X)}\right\}\times
		\underset{Z \in \{U, V\}}{\textnormal{max}}\left\{\|Z\|_{\ttinf}\right\}.
	\end{equation}
\end{corollary}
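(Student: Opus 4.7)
The plan is to specialize Theorem~\ref{thrm:General_rectangular_bounds} to the rank-$r$ regime $\sigma_{r+1}(X)=0$ and then bound each of its four summands purely in terms of $\|E\|_{\eta}/\sigma_{r}(X)$ and $\|Z\|_{\ttinf}$. The starting point is the inequality guaranteed by Theorem~\ref{thrm:General_rectangular_bounds} with $\delta=\alpha\alpha'$, namely that $(1-\delta)\|\hat{U}-UW_{U}\|_{\ttinf}$ is bounded by the sum of $2\|(U_{\perp}U_{\perp}^{\top})E(VV^{\top})\|_{\ttinf}/\sigma_{r}(X)$, the symmetric term $2\|(V_{\perp}V_{\perp}^{\top})E^{\top}(UU^{\top})\|_{\ttinf}/\sigma_{r}(X)$, and the two $\sin\Theta^{2}$ residual terms $\|\sin\Theta(\hat{U},U)\|_{2}^{2}\|U\|_{\ttinf}$ and $\|\sin\Theta(\hat{V},V)\|_{2}^{2}\|V\|_{\ttinf}$.

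For the leading operator term, I would expand $U_{\perp}U_{\perp}^{\top}=I-UU^{\top}$ and bound the two pieces separately. The identity piece $\|EVV^{\top}\|_{\ttinf}$ is controlled via two submultiplicative-type inequalities supplied by the two-to-infinity machinery of Section~\ref{sec:Tech_tools_2infty}: first $\|AB\|_{\ttinf}\le\|A\|_{\ttinf}\|B\|_{2}$ reduces it to $\|EV\|_{\ttinf}$, and then $\|AB\|_{\ttinf}\le\|A\|_{\infty}\|B\|_{\ttinf}$ (an immediate consequence of the max-row-$\ell_{2}$ characterization of $\|\cdot\|_{\ttinf}$) gives $\|E\|_{\infty}\|V\|_{\ttinf}$. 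The subtracted piece $\|UU^{\top}E(VV^{\top})\|_{\ttinf}$ is handled by factoring $U$ off on the \emph{left}: $\|UU^{\top}EVV^{\top}\|_{\ttinf}\le\|U\|_{\ttinf}\|U^{\top}EVV^{\top}\|_{2}\le\|U\|_{\ttinf}\|E\|_{2}$. Then Riesz--Thorin interpolation $\|E\|_{2}\le\sqrt{\|E\|_{1}\|E\|_{\infty}}\le\max_{\eta\in\{1,\infty\}}\|E\|_{\eta}$ converts the spectral norm into the form appearing in the conclusion. The second operator summand is handled symmetrically, with the roles of $U,V$ and $E,E^{\top}$ interchanged.

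For the two quadratic $\sin\Theta$ terms, I would invoke Wedin's theorem together with Weyl's inequality: in the rank-$r$ setting $\sigma_{r+1}(X)=0$ and the standing hypothesis $\sigma_{r}(X)\ge 2\|E\|_{2}$ yield $\sigma_{r}(\hat{X})\ge\sigma_{r}(X)/2$, hence $\|\sin\Theta(\hat{U},U)\|_{2}\le 2\|E\|_{2}/\sigma_{r}(X)\le 1$. Squaring and using $\|E\|_{2}/\sigma_{r}(X)\le 1/2$ linearizes the quadratic dependence, giving $\|\sin\Theta(\hat{U},U)\|_{2}^{2}\le 2\|E\|_{2}/\sigma_{r}(X)\le 2\max_{\eta}\{\|E\|_{\eta}/\sigma_{r}(X)\}$, and analogously for $\hat{V},V$. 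Assembling the constants, the two operator summands each contribute at most $4\max_{\eta}\{\|E\|_{\eta}/\sigma_{r}(X)\}\cdot\max_{Z}\{\|Z\|_{\ttinf}\}$ and the two $\sin\Theta^{2}$ summands each contribute at most $2$ of the same product, for a total constant of $12$.

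The only real obstacle is a bookkeeping one: in bounding the cross term $\|UU^{\top}E(VV^{\top})\|_{\ttinf}$, one must contract on the left by $U$ (trading in $\|U\|_{\ttinf}$ at the cost of replacing the row-sum norm $\|E\|_{\infty}$ by the spectral norm $\|E\|_{2}$) rather than on the right by $V$; contracting on the right would pair $\|V\|_{\ttinf}$ with a factor of $\|E\|_{\infty}$ obtained from $\|UU^{\top}\|_{\infty}$, which is not uniformly bounded and would break the decoupling of $\max_{\eta}\|E\|_{\eta}$ from $\max_{Z}\|Z\|_{\ttinf}$. Making the correct choice on each of the two sides of the expansion is what ultimately allows the final product-form bound.
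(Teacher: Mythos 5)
Your proposal follows the paper's own proof essentially step for step: apply the low-rank case of Theorem~\ref{thrm:General_rectangular_bounds}, expand $U_{\perp}U_{\perp}^{\top}=I-UU^{\top}$ and use the two-to-infinity submultiplicativity inequalities from Section~\ref{sec:Tech_tools_2infty} on each piece, bound the $\sin\Theta$ terms via Wedin together with $\|E\|_{2}\le\max\{\|E\|_{1},\|E\|_{\infty}\}$ and the gap assumption, and tally $4+4+2+2=12$. The only cosmetic difference is that you invoke Riesz--Thorin interpolation to justify $\|E\|_{2}\le\max_{\eta}\|E\|_{\eta}$, whereas the paper simply states it as a known matrix-norm fact; otherwise the argument, including the left-versus-right contraction choice on the cross term, is the same.
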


\section{Applications}
\label{sec:Applications}
This section applies our perturbation theorems and two-to-infinity norm machinery to three statistical settings corresponding to, among others, the results in \cite{Fan-et-al--2016}, \cite{Cai-Zhang--2016}, and \cite{Lyzinski-et-al--2014EJS}, thereby yielding Theorem~\ref{thrm:FanThrm2.1++}, Theorem~\ref{thrm:SubspaceRecovery}, and Theorem~\ref{thrm:rhoSBM2infinity}, respectively. Each of these theorems (including Theorem~\ref{thrm:generalCovarEst} presented earlier) is obtained by combining general considerations with application-specific analysis.

Moving forward, consider the following structural matrix property which arises within the context of low rank matrix recovery.
\begin{definition}[\cite{Candes-Recht--2009}]
	Let $\mathcal{U}$ be a subspace of $\R^{p}$ of dimension $r$, and let $P_{\mathcal{U}}$ be the orthogonal projection onto $\mathcal{U}$. Then the \emph{coherence} of $\mathcal{U}$ (vis-\`{a}-vis the standard basis $\{e_{i}\}$) is defined to be
	\begin{equation}
		\mu(\mathcal{U}) := \left(\frac{p}{r}\right)\underset{i\in[p]}{\text{max}}\|P_{\mathcal{U}}e_{i}\|_{2}^{2}.
	\end{equation}
\end{definition}
For $U\in\mathbb{O}_{p,r}$, the (orthonormal) columns of $U$ span a subspace of dimension $r$ in $\R^{p}$, so it is natural to abuse notation and to interchange $U$ with its underlying subspace $\mathcal{U}$. In this case $P_{U} \equiv UU^{\top}$, and so Propositions~\ref{prop:2infty_as_max_Euclidean}~and~\ref{prop:partial_isometry_invariance} yield the equivalent formulation
\begin{equation*}
	\mu(U) :=\left(\frac{p}{r}\right)\|U\|_{\ttinf}^{2}.
\end{equation*}
Observe that $1 \le \mu(U) \le p/r$, where the upper and lower bounds are achieved for $U$ consisting of all standard basis vectors and of vectors whose entries each have magnitude $1/\sqrt{p}$, respectively. Since the columns of $U$ are mutually orthogonal with unit Euclidean norm, the magnitude of $\mu(U)$ can be viewed as describing the row-wise accumulation of ``mass'' in $U$.

The bounded coherence property \cite{Candes-Recht--2009} corresponds to the existence of a positive constant $C_{\mu} \ge 1$ such that
\begin{equation}
	\label{eq:boundedCoherencePropertyTTINF}
	\|U\|_{2\rightarrow\infty}\le C_{\mu}\sqrt{\frac{r}{p}}.
\end{equation}
Bounded coherence arises naturally in the random orthogonal (matrix) model and influences the recoverability of low rank matrices via nuclear norm minimization when sampling only a subset of the matrix entries \cite{Candes-Recht--2009}. In random matrix theory, bounded coherence is closely related to eigenvector delocalization \cite{Rudelson-Vershynin--2015}. Examples of matrices whose row and column space factors exhibit bounded coherence can be found, for example, in the study of networks. Specifically, it is not difficult to check that bounded coherence holds for the top eigenvectors of the (non-random) low rank edge probability matrices corresponding to the Erd\H{o}s-R\'{e}nyi random graph model and the balanced $K$-block stochastic block model, among others.
\subsection{Singular vector (entrywise) perturbation bound}
\label{subsec:Application_Fan}
In \cite{Fan-et-al--2016} the authors consider low rank matrices exhibiting bounded coherence. For such matrices, the results therein provide singular vector perturbation bounds under the $\ell_{\infty}$ vector norm which are then applied to robust covariance estimation.

In this paper, Corollary~\ref{cor:reFanThrmRect} formulates a perturbation bound that is operationally in the same spirit as results in \cite{Fan-et-al--2016}. Note that upper bounding $\|\hat{U}-UW_{U}\|_{\ttinf}$ immediately bounds both $\|\hat{U}-UW_{U}\|_{\rmMax}$ and $\textnormal{inf}_{W\in\mathbb{O}_{r}}\|\hat{U}-UW\|_{\rmMax}$, thereby providing $\ell_{\infty}$-type bounds for the perturbed singular vectors up to orthogonal transformation, the analogue of sign flips for well-separated, distinct singular values (similarly for $V$, $\hat{V}$, and $W_{V}$). The joint, symmetric nature of the singular value gap assumption controls the dependence of $\|\hat{U}-UW_{U}\|_{2\rightarrow\infty}$ and $\|\hat{V}-VW_{V}\|_{2\rightarrow\infty}$ on one another and takes into account the underlying matrix dimensions.

For symmetric matrices, Theorem~\ref{thrm:FanThrm2.1++} improves upon \cite{Fan-et-al--2016} and implicitly applies to the applications discussed therein.

\begin{theorem}[Application:~eigenvector (entrywise) perturbation bound]
	\label{thrm:FanThrm2.1++}
	Let $X,E \in \R^{p \times p}$ be symmetric matrices where $X$ with $\textnormal{rank}(X)=r$ has spectral decomposition $X=U \Lambda U^{\top} + U_{\perp}\Lambda_{\perp}U_{\perp}^{\top} \equiv U\Lambda U^{\top}$ and leading eigenvalues $|\lambda_{1}|\ge|\lambda_{2}|\ge \dots \ge|\lambda_{r}| > 0$. Suppose that $|\lambda_{r}| \ge 4\|E\|_{\infty}$. Then there exists an orthogonal matrix $W\in\mathbb{O}_{r}$ such that
	\begin{align}
		\|\hat{U}-UW\|_{2\rightarrow\infty}
		\le 14\left(\frac{\|E\|_{\infty}}{|\lambda_{r}|}\right)\|U\|_{\ttinf}.
	\end{align}
\end{theorem}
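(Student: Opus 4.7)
The plan is to apply the symmetric Procrustean decomposition (Corollary~\ref{cor:symmetric_decomp}) with $W = W_U$ the Frobenius-optimal Procrustes rotation, and then to bound each of the resulting terms in the two-to-infinity norm using the machinery of Section~\ref{sec:Tech_tools_2infty}. Because $X = U\Lambda U^\top$ has rank exactly $r$, the identity $(I-UU^\top)X = X - U(U^\top U)\Lambda U^\top = 0$ forces the third term of the symmetric decomposition to vanish, reducing the analysis to
\[ \hat{U}-UW_U = (I-UU^\top)EUW_U\hat{\Sigma}^{-1} + (I-UU^\top)E(\hat{U}-UW_U)\hat{\Sigma}^{-1} + U(U^\top\hat{U}-W_U). \]

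Next I would collect a few deterministic preliminaries. Since $E$ is symmetric, $\|E\|_2 \le \sqrt{\|E\|_1\|E\|_\infty} = \|E\|_\infty$, so the hypothesis $|\lambda_r|\ge 4\|E\|_\infty$ together with Weyl's inequality gives $\sigma_r(\hat{X}) \ge |\lambda_r| - \|E\|_2 \ge (3/4)|\lambda_r|$. The Davis--Kahan sin theta theorem then yields $\|\sin\Theta(\hat{U},U)\|_2 \le 2\|E\|_\infty/|\lambda_r|$, and the canonical-angle identity $1-\cos\theta_r \le \sin^2\theta_r$, applied to the factorization $U^\top\hat{U} = W_1\Sigma_UW_2^\top$ defining $W_U = W_1W_2^\top$, yields $\|U^\top\hat{U}-W_U\|_2 \le \|\sin\Theta(\hat{U},U)\|_2^2$.

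For the first term, the sub-multiplicative identities $\|AB\|_{\ttinf}\le\|A\|_\infty\|B\|_{\ttinf}$ and $\|AB\|_{\ttinf}\le\|A\|_{\ttinf}\|B\|_2$ (Proposition~\ref{prop:2infty_relations}) give $\|EU\|_{\ttinf} \le \|E\|_\infty\|U\|_{\ttinf}$ and $\|UU^\top EU\|_{\ttinf}\le\|U\|_{\ttinf}\|E\|_2\le\|U\|_{\ttinf}\|E\|_\infty$, so that $\|(I-UU^\top)EU\|_{\ttinf}\le 2\|E\|_\infty\|U\|_{\ttinf}$. Division by $\sigma_r(\hat{X})$ controls this term by a constant multiple of $(\|E\|_\infty/|\lambda_r|)\|U\|_{\ttinf}$. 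The fourth term is bounded directly by $\|U\|_{\ttinf}\|U^\top\hat{U}-W_U\|_2 \le \|U\|_{\ttinf}\|\sin\Theta(\hat{U},U)\|_2^2$, which under $\|E\|_\infty/|\lambda_r|\le 1/4$ is again of order $(\|E\|_\infty/|\lambda_r|)\|U\|_{\ttinf}$.

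The main obstacle is the self-referential second term, whose two-to-infinity norm depends on $\|\hat{U}-UW_U\|_{\ttinf}$ itself on the right. Splitting $(I-UU^\top)E = E - UU^\top E$ and applying the same sub-multiplicative inequalities, I would bound $\|E(\hat{U}-UW_U)\|_{\ttinf} \le \|E\|_\infty\|\hat{U}-UW_U\|_{\ttinf}$ and $\|UU^\top E(\hat{U}-UW_U)\|_{\ttinf} \le \|U\|_{\ttinf}\|E\|_\infty\cdot\sqrt{2}\|\sin\Theta(\hat{U},U)\|_2$, the latter again of order $(\|E\|_\infty/|\lambda_r|)\|U\|_{\ttinf}$. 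Dividing by $\sigma_r(\hat{X})\ge(3/4)|\lambda_r|$ produces a coefficient $\|E\|_\infty/\sigma_r(\hat{X})\le 1/3 < 1$ multiplying $\|\hat{U}-UW_U\|_{\ttinf}$, which I absorb to the left-hand side. Solving the resulting linear inequality and tracking the constants then yields the stated bound $\|\hat{U}-UW_U\|_{\ttinf}\le 14(\|E\|_\infty/|\lambda_r|)\|U\|_{\ttinf}$.
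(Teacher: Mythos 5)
Your proposal is correct and follows essentially the same route as the paper's proof: the same three-term symmetric Procrustean decomposition (your use of Corollary~\ref{cor:symmetric_decomp} with the $X$-term vanishing is equivalent to the paper's specialization of Corollary~\ref{cor:rectangular_rewritten}), the same splitting of $(I-UU^\top)E$ into $E - UU^\top E$, the same norm inequalities from Propositions~\ref{prop:2infty_relations}, Lemma~\ref{lemma:geom_resid_bounds}, Lemma~\ref{lem:Frob_opt_in_spectral}, and Theorem~\ref{thrm:mod_Samworth}, and the same absorption of the self-referential term. The only cosmetic difference is that you use the sharper Weyl estimate $|\hat{\lambda}_r|\ge\tfrac{3}{4}|\lambda_r|$ where the paper settles for $|\hat{\lambda}_r|\ge\tfrac{1}{2}|\lambda_r|$, so tracking your constants would actually produce a coefficient around $7$ rather than the stated $14$, which of course still implies the theorem.
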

Theorem~\ref{thrm:FanThrm2.1++} provides a user-friendly, deterministic perturbation bound that permits repeated eigenvalues. Theorem~\ref{thrm:FanThrm2.1++} makes no assumption on $\|U\|_{\ttinf}$, though it can be immediately combined with the bounded coherence property reflected in Eq.~(\ref{eq:boundedCoherencePropertyTTINF}) to yield the bound
\begin{align*}
	\|\hat{U}-UW\|_{\ttinf}
	\le 14C_{\mu}\left(\frac{\sqrt{r}\|E\|_{\infty}}{\sqrt{p}|\lambda_{r}|}\right).
\end{align*}
It is worth emphasizing that stronger (albeit more complicated) bounds are obtained in the proof leading up to the statement of Theorem~\ref{thrm:FanThrm2.1++}.

\subsection{Singular subspace perturbation and random matrices}
\label{subsec:Application_Cai}
This section interfaces the results in this paper with the spectral and Frobenius-based rate-optimal singular subspace perturbation bounds obtained in \cite{Cai-Zhang--2016}.

Consider the setting wherein $X\in\R^{p_{1} \times p_{2}}$ is a fixed rank $r$ matrix with $r \ll p_{1} \ll p_{2}$ and $\sigma_{r}(X) \ge C(p_{2}/\sqrt{p_{1}})$, where $E\in\R^{p_{1} \times p_{2}}$ is a random matrix with independent standard normal entries. By \cite{Cai-Zhang--2016}, then with high probability, the following bounds hold for the left and right singular subspaces:
\begin{equation*}
	\|\sin\Theta(\hat{U},U)\|_{2}
	\le C\left(\frac{\sqrt{p_{1}}}{\sigma_{r}(X)}\right)
	\text{  and  }
	\|\sin\Theta(\hat{V},V)\|_{2}
	\le C\left(\frac{\sqrt{p_{2}}}{\sigma_{r}(X)}\right).
\end{equation*}
Here, working with $V$ and $\hat{V}$ is desirable though comparatively more difficult. Theorem~\ref{thrm:SubspaceRecovery} demonstrates how (even relatively coarse) two-to-infinity norm analysis allows one to recover upper and lower bounds for $\|\hat{V}-VW_{V}\|_{\ttinf}$ that at times nearly match. For ease of presentation, Theorem~\ref{thrm:SubspaceRecovery} is stated simply as holding with high probability.
\begin{theorem}[Application:~singular subspace recovery]
	\label{thrm:SubspaceRecovery}
	Let $X,E\in\R^{p_{1} \times p_{2}}$ be as in Section~\ref{subsec:Application_Cai}. Then there exists a constant $C_{r}>0$ such that with high probability,
	\begin{equation}
		\|\hat{V}-VW_{V}\|_{\ttinf}
		\le C_{r}\left(\tfrac{\log(p_{2})}{\sigma_{r}(X)}\right)
		\left(1
		+ \left(\tfrac{p_{1}}{\sigma_{r}(X)}\right)
		+ \left(\tfrac{\sqrt{p_{1}}}{\log(p_{2})}\right)\|V\|_{\ttinf}
		\right).
	\end{equation}
	If in addition $\sigma_{r}(X) \ge c p_{1}$ and $\|V\|_{\ttinf}\le c_{r}/\sqrt{p_{2}}$ for some $c, c_{r} > 0$, then with high probability
	\begin{equation}
		\|\hat{V}-VW_{V}\|_{\ttinf}
		\le C_{r}\left(\tfrac{\log(p_{2})}{\sigma_{r}(X)}\right).
	\end{equation}
	The lower bound $\tfrac{1}{\sqrt{p_{2}}}\|\sin\Theta(\hat{V},V)\|_{2}
	\le \|\hat{V}-VW_{V}\|_{\ttinf}$ always holds by Proposition~\ref{prop:2infty_&_spec} and Lemma~\ref{lemma:geom_resid_bounds}.
\end{theorem}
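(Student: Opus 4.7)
The plan is to apply the symmetric (right-singular-subspace) version of Theorem~\ref{thrm:baselineProcrustesPerturbBound}, which bounds $\|\hat{V}-VW_V\|_{\ttinf}$ by four terms obtained from the swap protocol in Theorem~\ref{thrm:rectangular_decomp}. The third of these, proportional to $\|(V_\perp V_\perp^\top) X^\top (U_\perp U_\perp^\top)\|_{\ttinf}$, vanishes identically here: because $X$ has rank exactly $r$, $X^\top = V\Sigma U^\top$ lies entirely in the column span of $V$, so $V_\perp^\top X^\top = 0$. What remains is a first-order Gaussian term, a mixed residual weighted by $\|\sin\Theta(\hat{U},U)\|_2$, and a quadratic residual $\|\sin\Theta(\hat{V},V)\|_2^2\|V\|_{\ttinf}$. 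I will use the cited spectral $\sin\Theta$ bounds, namely $\|\sin\Theta(\hat U, U)\|_2 \lesssim \sqrt{p_1}/\sigma_r(X)$ and $\|\sin\Theta(\hat V, V)\|_2 \lesssim \sqrt{p_2}/\sigma_r(X)$, to handle the latter two.

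For the leading term $\|(V_\perp V_\perp^\top) E^\top UU^\top\|_{\ttinf}/\sigma_r(X)$, I would exploit that $E^\top U$ has i.i.d.\ $N(0,1)$ entries by rotational invariance (since $U$ has orthonormal columns). Each row of $(V_\perp V_\perp^\top) E^\top U$ then has squared norm distributed as $\sigma_i^2 \chi_r^2$ with $\sigma_i^2 = \|V_\perp V_\perp^\top e_i\|_2^2 \le 1$; a standard chi-squared tail bound, union-bounded across the $p_2$ rows, gives $\lesssim_r \sqrt{\log p_2}$ with high probability, producing the $\log p_2/\sigma_r(X)$ contribution to the claimed bound (with room to spare). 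For the mixed term, each row of $(V_\perp V_\perp^\top) E^\top (U_\perp U_\perp^\top)$ has Euclidean norm at most $\|E v_i\|_2$ with $v_i := V_\perp V_\perp^\top e_i$ satisfying $\|v_i\|_2 \le 1$; chi-squared concentration on $\chi^2_{p_1}$ together with a union bound gives row norms $\lesssim \sqrt{p_1 + \log p_2} \lesssim \sqrt{p_1}$, which after multiplying by $\|\sin\Theta(\hat U, U)\|_2/\sigma_r(X)$ delivers the $p_1/\sigma_r(X)^2$ contribution.

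Assembling the three pieces and invoking $\sigma_r(X) \gtrsim p_2/\sqrt{p_1}$ to rewrite the quadratic residual $p_2 \|V\|_{\ttinf}/\sigma_r(X)^2$ as $\sqrt{p_1}\|V\|_{\ttinf}/\sigma_r(X)$ yields the first claim, after bounding $\sqrt{\log p_2} \le \log p_2$. For the refined bound under $\sigma_r(X) \ge cp_1$ and $\|V\|_{\ttinf} \le c_r/\sqrt{p_2}$, the $p_1/\sigma_r(X)^2$ term becomes $\lesssim 1/\sigma_r(X)$, while the $\sqrt{p_1}\|V\|_{\ttinf}/\sigma_r(X)$ term becomes $\lesssim \sqrt{p_1/p_2}/\sigma_r(X) \le 1/\sigma_r(X)$ using $p_1 \ll p_2$; both are then absorbed into $C_r \log p_2/\sigma_r(X)$.

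The main obstacle is orchestrating the Gaussian row-norm concentration cleanly, particularly for the mixed term where the $V_\perp V_\perp^\top$ projection reshapes the randomness and where the union bound over $p_2$ rows must not dominate the baseline $\sqrt{p_1}$ scaling; this is precisely where the asymmetry $p_1 \ll p_2$ and careful tracking of the $\sqrt{p_1 + \log p_2}$ factor matters. A secondary bookkeeping task is verifying the spectral-gap hypothesis $\sigma_r(X) \ge 2\|E\|_2$ required to invoke Theorem~\ref{thrm:baselineProcrustesPerturbBound}; this follows with high probability from the standard Gaussian operator-norm bound $\|E\|_2 \lesssim \sqrt{p_2}$ (for $p_1 \le p_2$) together with the hypothesis $\sigma_r(X) \gtrsim p_2/\sqrt{p_1} \ge \sqrt{p_2}$.
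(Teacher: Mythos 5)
Your argument is correct and follows essentially the same route as the paper: decompose $\hat{V}-VW_V$ via the right-singular-subspace version of the Procrustean decomposition, note the $X^\top$ term vanishes since $\textnormal{rank}(X)=r$, control the first-order term by row-wise Gaussian/chi-squared concentration plus a union bound, control the mixed term by the Cai--Zhang $\sin\Theta$ bound on $\hat{U}$, and control the quadratic residual by the $\sin\Theta$ bound on $\hat{V}$. The paper starts instead from Corollary~\ref{cor:rectangular_rewritten} (so its mixed term is $\|(V_\perp V_\perp^\top)E^\top\|_{\ttinf}$ without your extra $U_\perp U_\perp^\top$ projector, and it settles for the coarser $\sqrt{p_1}\log p_2$ rather than your $\sqrt{p_1+\log p_2}$), but these are cosmetic differences absorbed by the $C_r$ and the $\log p_2$ slack already present in the stated bound.
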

\subsection{Statistical inference for random graphs}
\label{subsec:Application_Graphs}
In the study of networks, community detection and clustering are tasks of central interest. A network (i.e.~a graph $\mathcal{G}\equiv(\mathcal{V},\mathcal{E})$ consisting of a vertex set $\mathcal{V}$ and edge set $\mathcal{E}$) may be represented by its adjacency matrix, $A \equiv A_{\mathcal{G}}$, which captures the edge connectivity of the nodes in the network. For inhomogeneous independent edge random graph models, the adjacency matrix can be viewed as a random perturbation of an underlying (often low rank) edge probability matrix $P$, where in expectation $P \equiv \mathbb{E}[A]$. In the notation of Section~\ref{subsec:SVD_perturb_framework}, the matrix $P$ corresponds to $X$, the matrix $A-P$ corresponds to $E$, and the matrix $A$ corresponds to $\hat{X}$. By viewing $\hat{U}$ (here the matrix of leading eigenvectors of $A$) as an estimate of $U$ (here the matrix of leading eigenvectors of $P$), the results in Section~\ref{sec:Main_results} immediately apply.

Spectral methods and related optimization problems for random graphs employ the spectral decomposition of the adjacency matrix (or matrix-valued functions thereof, e.g.~the Laplacian matrix and its variants) \cite{Tang-Priebe--2016+, Sussman-et-al--2012, Rohe-Chatterjee-Yu--2011, sarkar2015role}. For example, \cite{Le-Vershynin-Levina--2016} presents a general spectral-based, dimension-reduction community detection framework which incorporates the spectral norm distance between the leading eigenvectors of $A$ and $P$. Taken in the context of \cite{Le-Vershynin-Levina--2016} and indeed the wider statistical network analysis literature, this paper complements existing work and paves the way for expanding the toolkit of network analysts to include more Procrustean considerations and two-to-infinity norm machinery.

Much of the existing literature for graphs and network models concerns the popular stochastic block model (SBM) \cite{Holland-et-al--1983} and its variants. The related random dot product graph model (RDPG model) \cite{Young-Scheinerman--2007} has recently been developed in a series of papers as both a tractable and flexible random graph model amenable to spectral methods \cite{Sussman-et-al--2012, Fishkind-et-al--2013, Sussman-et-al--2014, Lyzinski-et-al--2014EJS, Tang-et-al--2014, Tang-et-al--2016, Tang-Priebe--2016+}. In the RDPG model, the graph's eigenvalues and eigenvectors are closely related to the model's generative latent positions. In particular, the leading eigenvectors of the adjacency matrix can be used to estimate the latent positions when properly scaled by the leading eigenvalues.

In the context of the wider RDPG literature, this paper extends both the treatment of the two-to-infinity norm in \cite{Lyzinski-et-al--2014EJS} and Procrustes matching for graphs in \cite{Tang-et-al--2016}. Our two-to-infinity norm bounds in Section~\ref{sec:Main_results} imply an eigenvector version of Lemma 5 in \cite{Lyzinski-et-al--2014EJS} that does not require the matrix-valued model parameter $P$ to have distinct eigenvalues. Our Procrustes analysis also suggests a refinement of the test statistic formulation in the two-sample graph inference hypothesis testing framework of \cite{Tang-et-al--2016}.

It is worth mentioning that our level of generality permits the consideration of random graph (matrix) models that allow edge dependence structure, such as the $(C,c,\gamma)$ property \cite{O'Rourke-Vu-Wang--2014} (see below). Indeed, moving beyond independent edge models represents an important direction for future work in the field of statistical network analysis.

\begin{definition}[\cite{O'Rourke-Vu-Wang--2014}]
	\label{def:Ccgamma}
	A $p_{1} \times p_{2}$ random matrix $E$ is said to be $(C,c,\gamma)$-concentrated if, given a trio of positive constants $(C,c,\gamma)$, for all unit vectors $u\in\R^{p_{1}}$, $v \in \R^{p_{2}}$, and for every $t>0$,
	\begin{equation}
	\mathbb{P}\left[|\langle E v, u \rangle| > t \right]
	\le C \exp(-c t^{\gamma}).
	\end{equation}
\end{definition}

\begin{remark}[Probabilistic concentration and the perturbation $E$]
	\label{rem:error_matrix_concentration}
	The proofs of our main results demonstrate the importance of bounding $\|EV\|_{2\rightarrow\infty}$ and $\|U^{\top}EV\|_{2}$ in the perturbation framework of Section \ref{subsec:SVD_perturb_framework}. When $E$ satisfies the $(C,c,\gamma)$-concentrated property in Definition \ref{def:Ccgamma}, these quantities can be easily controlled by simple union bounds. In general, it is desirable to control these quantities via standard Bernstein and Hoeffding-type probabilistic bounds encountered throughout statistics.
\end{remark}

In the statistical network analysis literature, current active research directions include the development of random graph models exhibiting edge correlation and the development of inference methodology for multiple graphs. Here, we briefly consider the $\rho$-correlated stochastic block model \cite{lyzinski2015spectral} and the omnibus embedding matrix for multiple graphs \cite{Priebe--2013} employed in \cite{Chen--2016, Lyzinski--JOFC--2016, levin2017central}. The $\rho$-correlated stochastic block model provides a simple yet easily interpretable and tractable model for dependent random graphs \cite{Lyzinski--info--2017+}, while the omnibus embedding matrix provides a framework for performing spectral analysis on multiple graphs by leveraging graph (dis)similarities \cite{Priebe--2013, Lyzinski--JOFC--2016,Chen--2016}.

\begin{definition}[\cite{Lyzinski--info--2017+}]
	\label{def:rhoSBMgraphs}
	Let $\mathcal{G}^{n}$ denote the set of labeled, $n$-vertex, simple, undirected graphs.
	Two $n$-vertex random graphs $(G^{1},G^{2}) \in \mathcal{G}^{1}\times\mathcal{G}^{2}$ are said to be \emph{$\rho$-correlated SBM$(\kappa,\overset{\rightarrow}{n},b,\Lambda)$ graphs} (abbreviated $\rho$-SBM) if
	\begin{enumerate}
		\item $G^{1}:=(\mathcal{V},\mathcal{E}(G^{1}))$ and $G^{2}:=(\mathcal{V},\mathcal{E}(G^{2}))$ are marginally SBM$(\kappa,\overset{\rightarrow}{n},b,\Lambda)$ random graphs; i.e.\ for each $i=1,2$,
		\begin{enumerate}
			\item The vertex set $\mathcal{V}$ is the union of $\kappa$ blocks $\mathcal{V}_{1},\mathcal{V}_{2},\dots,\mathcal{V}_{\kappa}$, which are disjoint sets with respective cardinalities $n_{1},n_{2},\dots,n_{\kappa}$;
			\item The block membership function $b:\mathcal{V}\mapsto[\kappa]$ is such that for each $v\in\mathcal{V}$, $b(v)$ denotes the block of $v$; i.e., $v \in \mathcal{V}_{b(v)}$;
			\item The block adjacency probabilities are given by the symmetric matrix $\Lambda\in[0,1]^{\kappa\times \kappa}$; i.e., for each pair of vertices $\{j,l\}\in\binom{\mathcal{V}}{2}$, the adjacency of $j$ and $l$ is an independent Bernoulli trial with probability of success $\Lambda_{b(j),b(l)}$.
		\end{enumerate}
		\item The random variables
		\begin{equation*}
		\{\mathbb{I}[\{j,k\}\in\mathcal{E}(G^{i})]\}_{i=1,2;\{j,k\}\in\binom{\mathcal{V}}{2}}
		\end{equation*}
		are collectively independent except that for each $\{j,k\}\in\binom{\mathcal{V}}{2}$, the correlation between $\mathbb{I}[\{j,k\}\in\mathcal{E}(G^{1})]$ and $\mathbb{I}[\{j,k\}\in\mathcal{E}(G^{2})]$ is $\rho \ge 0$.
	\end{enumerate}
\end{definition}

The following theorem provides a guarantee for estimating the leading eigenvectors of a multiple graph omnibus matrix when the graphs are not independent. Theorem~\ref{thrm:rhoSBM2infinity} is among the first of its kind and complements the recent, concurrent work on joint graph embedding in \cite{levin2017central}.

\begin{theorem}[Application:~multiple graph inference]
	\label{thrm:rhoSBM2infinity}
	Let $(G^{1},G^{2})$ be a pair of $\rho$-correlated SBM$(\kappa,\overset{\rightarrow}{n},b,\Lambda)$ graphs as in Definition~\ref{def:rhoSBMgraphs} with $n \times n$ (symmetric, binary) adjacency matrices $(A^{1},A^{2})$. Let the model omnibus matrix $\mathfrak{O}$ and adjacency omnibus matrix $\hat{\mathfrak{O}}$ be given by
	\begin{align*}
		\mathfrak{O} :=
		\begin{bmatrix}
		1 & 1 \\
		1 & 1
		\end{bmatrix}
		\otimes \mathcal{Z}\Lambda\mathcal{Z}^{\top}
		\textnormal{ and }
		\hat{\mathfrak{O}} :=
		\begin{bmatrix}
		A^{1} & \frac{A^{1}+A^{2}}{2} \\
		\frac{A^{1}+A^{2}}{2} & A^{2}
		\end{bmatrix},
	\end{align*}
	where $\otimes$ denotes the matrix Kronecker product and  $\mathcal{Z}$ is the $n \times \kappa$ matrix of vertex-to-block assignments such that $P:=\mathcal{Z}\Lambda\mathcal{Z}^{\top}\in[0,1]^{n \times n}$ denotes the edge probability matrix. Let $\textnormal{rank}(\Lambda)=r$ and hence $\textnormal{rank}(\mathfrak{O})=r$. For $i=1,2$, suppose that the maximum expected degree of $G^{i}$, $\Delta$, satisfies $\Delta \gg \log^{4}(n)$, along with $\sigma_{r}(\mathfrak{O}) \ge c\Delta$ for some $c>0$. Let $U,\hat{U}\in\mathbb{O}_{2n,r}$ denote the matrices whose columns are the normalized eigenvectors corresponding to the largest eigenvalues of $\mathfrak{O}$ and $\hat{\mathfrak{O}}$ given by the diagonal matrices $\Sigma$ and $\hat{\Sigma}$, respectively. Then with probability $1-o(1)$ as $n \rightarrow \infty$,
	\begin{align*}
		\|\hat{U} - U W_{U}\|_{\ttinf}
		=\mathcal{O}_{r}\left(\tfrac{\log n}{\Delta}\right).
	\end{align*}
	In contrast, spectral norm analysis implies the weaker two-to-infinity norm bound
	$\|\hat{U}-UW_{U}\|_{\ttinf} = \mathcal{O}_{r}\left(\tfrac{1}{\sqrt{\Delta}}\right)$.
\end{theorem}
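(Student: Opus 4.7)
The plan is to apply the symmetric, rank-$r$ specialization of Corollary~\ref{cor:rectangular_expanded} with $(X,\hat{X})=(\mathfrak{O},\hat{\mathfrak{O}})$ and bound each resulting summand using (i) the explicit structure of $U$, (ii) standard spectral-norm concentration for random graphs, and (iii) a Bernstein plus $\varepsilon$-net argument for $\|EU\|_{\ttinf}$ that correctly handles the $\rho$-correlation. First, $\mathfrak{O}=J_{2}\otimes P$ with $J_{2}=\mathbf{1}_{2}\mathbf{1}_{2}^{\top}$, so the nonzero eigenvalues of $\mathfrak{O}$ are $\{2\lambda_{i}(P)\}_{i=1}^{r}$ with eigenvectors $\tfrac{1}{\sqrt{2}}\mathbf{1}_{2}\otimes u_{i}(P)$; writing $U_{P}$ for the leading eigenvectors of $P$, we have $U=\tfrac{1}{\sqrt{2}}\mathbf{1}_{2}\otimes U_{P}$. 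The SBM structure $P=\mathcal{Z}\Lambda\mathcal{Z}^{\top}$ forces $U_{P}$ to be piecewise constant on blocks, so $\|U\|_{\ttinf}=\mathcal{O}_{r}(1/\sqrt{n})$, i.e., $U$ satisfies the bounded coherence property.

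Next, write $E:=\hat{\mathfrak{O}}-\mathfrak{O}=M_{1}\otimes(A^{1}-P)+M_{2}\otimes(A^{2}-P)$ for fixed $2\times 2$ matrices $M_{1},M_{2}$ with $\|M_{i}\|_{2}=\mathcal{O}(1)$. Marginally, each $A^{i}-P$ is the error matrix of an inhomogeneous Erd\H{o}s--R\'enyi graph with maximum expected degree $\Delta\gg\log^{4}n$, so standard concentration gives $\|A^{i}-P\|_{2}=\mathcal{O}(\sqrt{\Delta})$ with probability $1-o(1)$, and hence $\|E\|_{2}=\mathcal{O}(\sqrt{\Delta})$. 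Combining $\sigma_{r}(\mathfrak{O})\ge c\Delta$ with Weyl's inequality gives $\sigma_{r}(\hat{\mathfrak{O}})\ge c\Delta/2$ on this event, and Davis--Kahan yields $\|\sin\Theta(\hat{U},U)\|_{2}=\mathcal{O}(1/\sqrt{\Delta})$; the standard Procrustes identity then gives $\|U^{\top}\hat{U}-W_{U}\|_{2}\le\|\sin\Theta(\hat{U},U)\|_{2}^{2}=\mathcal{O}(1/\Delta)$. A routine Bernstein-type bound on $\sum_{j}E_{ij}^{2}$ also yields $\|E\|_{\ttinf}=\mathcal{O}(\sqrt{\Delta})$.

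Third, apply the symmetric, rank-$r$ form of Corollary~\ref{cor:rectangular_expanded}. The fourth summand vanishes since $\mathfrak{O}$ has rank $r$. The ``pure second-order'' summand $U(U^{\top}\hat{U}-W_{U})$ has $\ttinf$-norm at most $\|U\|_{\ttinf}\cdot\|U^{\top}\hat{U}-W_{U}\|_{2}=\mathcal{O}_{r}(1/(\Delta\sqrt{n}))$. Each of the two residual cross-terms combines a factor bounded in $\ttinf$-norm by $\mathcal{O}(\sqrt{\Delta})$ (via $\|E\|_{\ttinf}$ together with the constrained submultiplicativity of $\|\cdot\|_{\ttinf}$) with a factor bounded in spectral norm by $\mathcal{O}(1/\sqrt{\Delta})$ (either $\|\sin\Theta\|_{2}$ or $\|U^{\top}\hat{U}-W_{U}\|_{2}$), divided by $\sigma_{r}(\hat{\mathfrak{O}})\ge c\Delta/2$, and is therefore $\mathcal{O}(1/\Delta)$. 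The problem thus reduces to controlling the leading-order contribution $(I-UU^{\top})EUW_{U}\hat{\Sigma}^{-1}$, whose $\ttinf$-norm is at most $(\|EU\|_{\ttinf}+\|U\|_{\ttinf}\|U^{\top}EU\|_{2})/\sigma_{r}(\hat{\mathfrak{O}})$, with the second summand already $\mathcal{O}_{r}(1/\sqrt{n\Delta})$.

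The main obstacle is to bound $\|EU\|_{\ttinf}$ while correctly accommodating the $\rho$-correlation. The key observation is that the $\rho$-coupling in Definition~\ref{def:rhoSBMgraphs} only relates $A^{1}_{jk}$ and $A^{2}_{jk}$ at matching indices, so for each fixed row $i\in[2n]$ the $2n$ entries $\{E_{ij}\}_{j=1}^{2n}$ partition into $n$ mutually independent pairs $\{(E_{ij},E_{i,j+n})\}_{j=1}^{n}$. For fixed $i$ and fixed unit $v\in\R^{r}$, write $\langle(EU)_{i},v\rangle$ as a sum of $n$ independent centered bounded terms of magnitude $\mathcal{O}_{r}(1/\sqrt{n})$ and total variance $\mathcal{O}_{r}(\Delta/n)$ (the $\rho$-coupling inflates the within-pair covariance by only an $\mathcal{O}(1)$ factor). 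Bernstein's inequality at scale $t=C_{r}\sqrt{\Delta\log n/n}$, combined with a standard $\tfrac{1}{2}$-net over $\mathbb{S}^{r-1}$ of cardinality at most $9^{r}$ and a union bound over $i\in[2n]$, delivers $\|EU\|_{\ttinf}=\mathcal{O}_{r}(\sqrt{\Delta\log n/n})$ with probability $1-o(1)$; dividing by $\sigma_{r}(\hat{\mathfrak{O}})\ge c\Delta/2$ and assembling all contributions yields $\|\hat{U}-UW_{U}\|_{\ttinf}=\mathcal{O}_{r}(\sqrt{\log n/(n\Delta)})\le\mathcal{O}_{r}(\log n/\Delta)$, where the final inequality uses $\Delta\le n$.
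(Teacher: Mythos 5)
Your proposal is correct and closely tracks the paper's argument at the structural level, but you use different concentration tools at the key step, which is worth remarking on. Both arguments apply the symmetric, rank-$r$ Procrustean decomposition (you cite Corollary~\ref{cor:rectangular_expanded}; the paper invokes the equivalent Corollary~\ref{cor:symmetric_decomp}, whose fourth term drops because $\textnormal{rank}(\mathfrak{O})=r$), then control the non-leading summands via $\|E\|_{2}=\mathcal{O}(\sqrt{\Delta})$, Weyl, a $\sin\Theta$ theorem, and Lemma~\ref{lemma:geom_resid_bounds}, and both exploit the symmetry $U_{(k+n),j}=U_{k,j}$ coming from $\mathfrak{O}=J_{2}\otimes P$ so that the $\rho$-coupling of $A^{1}_{ik}$ and $A^{2}_{ik}$ is packaged inside a single independent summand across $k$ (you make the Kronecker structure and the pairing explicit, whereas the paper encodes it directly in the identity $e_{i}^{\top}(\hat{\mathfrak{O}}-\mathfrak{O})u_{j}=\sum_{k}(\tfrac{3}{2}A_{ik}^{1}+\tfrac{1}{2}A_{ik}^{2}-2P_{ik})U_{k,j}$). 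The divergence is in the concentration step for $\|EU\|_{\ttinf}$: the paper bounds $\|EU\|_{\ttinf}\le\sqrt{r}\,\textrm{max}_{i,j}|\langle Eu_{j},e_{i}\rangle|$ and applies Hoeffding entrywise (worst-case bounded increments, $\sum_{k}U_{k,j}^{2}=\mathcal{O}(1)$), yielding $\mathcal{O}_{r}(\log n)$, and similarly Hoeffding for $\|U^{\top}EU\|_{2}$; you instead apply Bernstein with a $\tfrac{1}{2}$-net over $\mathbb{S}^{r-1}$, which exploits the small per-term variance $\mathcal{O}(P_{ik}/n)$ rather than only the $\ell_{\infty}$ bound, giving the sharper $\|EU\|_{\ttinf}=\mathcal{O}_{r}(\sqrt{\Delta\log n/n})$ and hence the tighter $\mathcal{O}_{r}(\sqrt{\log n/(n\Delta)})$ for the leading term; you also settle for the cruder spectral bound $\|U^{\top}EU\|_{2}\le\|E\|_{2}$, which is fine because that term is lower order. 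Both routes land on the stated rate $\mathcal{O}_{r}(\log n/\Delta)$; Bernstein buys a genuinely sharper intermediate estimate (and would matter if one tried to push below the $1/\Delta$ scale coming from the cross-term), while the paper's Hoeffding-based entrywise argument is a bit more elementary and avoids the net. One small economy you could make: $\|E\|_{\ttinf}\le\|E\|_{2}=\mathcal{O}(\sqrt{\Delta})$ is immediate from Proposition~\ref{prop:2infty_&_spec}, so the separate Bernstein bound on $\sum_{j}E_{ij}^{2}$ you invoke is unnecessary; more to the point, the cross-terms are most cleanly bounded by just passing $\|\cdot\|_{\ttinf}\le\|\cdot\|_{2}$ and using the spectral sub-multiplicativity, as the paper does.
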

\begin{remark}[Edge correlation]
	\label{rem:edge_correlation}
	The implicit dependence upon the correlation factor $\rho$ in Theorem~\ref{thrm:rhoSBM2infinity} can be made explicit by a more careful analysis of constant factors and the probability statement. This is not our present concern.
\end{remark}

\section{Discussion and Conclusion}
\label{sec:Discussion}
This paper develops a flexible Procrustean matrix decomposition and its variants together with machinery for the two-to-infinity norm in order to study the perturbation of singular vectors and subspaces. We have demonstrated both implicitly and explicitly the widespread applicability of our framework and results to a host of popular matrix noise models, namely matrices that have
\begin{itemize}
	\item independent and identically distributed entries (Section~\ref{subsec:Application_Cai});
	\item independent and identically distributed rows (Section~\ref{subsec:Sample_application});
	\item independent but not identically distributed entries (Section~\ref{subsec:Application_Graphs});
	\item neither independent nor identically distributed entries (Section~\ref{subsec:Application_Fan}).
\end{itemize}

Each application presented in this paper requires model-specific analysis. Namely, one must determine which formulation of the Procrustean matrix decomposition to use, how to effectively transition between norms, and how to analyze the resulting quantities. For example, in Section~\ref{subsec:Application_Fan} the product term $\|E\|_{\infty}\|U\|_{\ttinf}$ is meaningful when coupled with the bounded coherence assumption, whereas the term $\|EU\|_{\ttinf}$ is analyzed directly in order to prove Theorem~\ref{thrm:rhoSBM2infinity}. Similarly, with respect to covariance estimation (Theorems~\ref{thrm:generalCovarEst}~and~\ref{thrm:FanThrm2.1++}), context-specific differences motivate idiosyncratic approaches when deriving the stated bounds.

This paper focuses on decomposing the matrix $\hat{U}-UW_{U}$ and on establishing the two-to-infinity norm as a useful tool for matrix perturbation analysis. In the time since this work was first made publicly available, there has been a flurry of activity within the statistics, computer science, and mathematics communities devoted to obtaining refined entrywise singular vector and eigenvector perturbation bounds \cite{cape2018signal,mao2017estimating,abbe2017entrywise,eldridge2017unperturbed,tang2017asymptotically}. Among the observations made earlier in this paper, it is useful to keep in mind that
\begin{equation*}
	\underset{W\in\mathbb{O}_{r}}{\textnormal{inf}}\|\hat{U}-UW\|_{\rmMax}
	\le \|\hat{U} - UW_{U}\|_{\rmMax}
	\le \|\hat{U} - U W_{U}\|_{\ttinf}.
\end{equation*}

Ample open problems and applications exist for which it is and will be productive to utilize the two-to-infinity norm and matrix decompositions in the future. It is our hope that the level of generality and flexibility presented in this paper will facilitate the more widespread use of the two-to-infinity norm in statistics.


\section{Proofs}
\label{sec:Proofs}
	\subsection{Technical tools for the two-to-infinity norm}
	\label{sec:Tech_tools_2infty}
	For $A \in \R^{p_{1} \times p_{2}}$, consider the vector norm on matrices $\|\cdot\|_{\ttinf}$ defined by
	\begin{equation*}
		\|A\|_{\ttinf}
		:= \underset{\|x\|_2=1}{\text{sup}}\|Ax\|_{\infty}.
	\end{equation*}
	Let $e_{i}$ denote the $i$th standard basis vector, and let $A_{i} \in \R^{p_{2}}$ denote the $i$th row of $A$. The following proposition says that $\|A\|_{\ttinf}$ corresponds to the maximum row-wise Euclidean norm of $A$.
	\begin{proposition}
		\label{prop:2infty_as_max_Euclidean}
		For $A \in \R^{p_{1} \times p_{2}}$, then $\|A\|_{\ttinf} = \underset{i \in [p_{1}]}{\textnormal{max}}\|A_{i}\|_{2}$.
	\end{proposition}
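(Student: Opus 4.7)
The plan is to unwind the definition of $\|A\|_{2\to\infty}$ in terms of the rows of $A$ and then apply Cauchy--Schwarz. First, I would observe that for any $x\in\R^{p_{2}}$, the $i$-th coordinate of $Ax$ equals $\langle A_{i},x\rangle$, so that
\[
\|Ax\|_{\infty} \;=\; \max_{i\in[p_{1}]}|\langle A_{i},x\rangle|.
\]
Consequently,
\[
\|A\|_{\ttinf} \;=\; \sup_{\|x\|_{2}=1}\max_{i\in[p_{1}]}|\langle A_{i},x\rangle|.
\]

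Next, since the maximum is over the finite index set $[p_{1}]$, I can interchange the supremum and the maximum to obtain
\[
\|A\|_{\ttinf} \;=\; \max_{i\in[p_{1}]}\sup_{\|x\|_{2}=1}|\langle A_{i},x\rangle|.
\]
Then the Cauchy--Schwarz inequality gives the upper bound $\sup_{\|x\|_{2}=1}|\langle A_{i},x\rangle|\le\|A_{i}\|_{2}$, with equality attained by choosing $x = A_{i}/\|A_{i}\|_{2}$ when $A_{i}\neq 0$ (the case $A_{i}=0$ is trivial since both sides vanish). Hence $\sup_{\|x\|_{2}=1}|\langle A_{i},x\rangle|=\|A_{i}\|_{2}$ for every $i$, and taking the maximum over $i$ yields the claimed identity.

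There is no real obstacle here; the only point meriting care is the swap of $\sup$ and $\max$, which is unconditionally valid because the max ranges over a finite set (equivalently, $\|Ax\|_{\infty}$ is the pointwise maximum of finitely many continuous functions of $x$, and taking suprema commutes with finite maxima). The proof thus reduces to two one-line applications of standard definitions together with the sharp Cauchy--Schwarz bound.
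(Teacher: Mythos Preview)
Your proof is correct and follows essentially the same approach as the paper: both use Cauchy--Schwarz for the upper bound and the explicit optimizer $x=A_{i^\star}/\|A_{i^\star}\|_{2}$ for the matching lower bound. The only cosmetic difference is that you interchange the finite $\max$ and the $\sup$ first, whereas the paper establishes the two inequalities separately.
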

	\begin{proof}
		The definition of $\|\cdot\|_{\ttinf}$ and the Cauchy-Schwarz inequality together yield that $\|A\|_{\ttinf} \le \underset{i \in [p_1]}{\textnormal{max}}\|A_{i}\|_{2}$, since
		\begin{equation*}
			\|A\|_{\ttinf}
			:=\underset{\|x\|_2=1}{\text{sup}}\|Ax\|_{\infty}
			=\underset{\|x\|_{2}=1}{\text{sup}} \underset{i \in [p_1]}{\text{max}}\left| \langle Ax,e_{i}\rangle\right|
			\le \underset{i \in [p_1]}{\textnormal{max}}\|A_{i}\|_{2}.
		\end{equation*}
		Barring the trivial case $A\equiv0$, let $e_{\star}$ denote the standard basis vector in $\R^{p_{1}}$ with index given by $\text{arg max}_{i \in [p_{1}]}\|A_{i}\|_{2} > 0$, noting that for each $i \in [p_{1}]$, $A_{i} = e_{i}^{\top}A$. Then for the unit Euclidean norm vector $x_{\star}:=\|e_{\star}^{\top}A\|_{2}^{-1}(e_{\star}^{\top}A)$,
		\begin{equation*}
			\|A\|_{\ttinf}
			= \underset{\|x\|_{2}=1}{\text{sup}} \underset{i \in [p_1]}{\text{max}}\left| \langle Ax,e_{i}\rangle\right|
			\ge \left| \langle A x_{\star},e_{\star}\rangle\right|
			=\|e_{\star}^{\top}A\|_{2}=\underset{i \in [p_1]}{\textnormal{max}}\|A_{i}\|_{2}.
		\end{equation*}
		This establishes the stated equivalence.
	\end{proof}
	\begin{remark}
		The two-to-infinity norm is subordinate with respect to the $\ell_{2}$ and $\ell_{\infty}$ vector norms in the sense that for any $x \in \R^{p_{2}}$, $\|Ax\|_{\infty}\le\|A\|_{\ttinf}\|x\|_{2}$. However, $\|\cdot\|_{\ttinf}$ is not sub-multiplicative for matrices in general. For example,
		$\|AB\|_{\ttinf}
		= \sqrt{5}
		> \sqrt{4}
		= \|A\|_{\ttinf}\|B\|_{\ttinf}$
		when
		\begin{align*}
			A \equiv B := \begin{bmatrix}
				1 & 1 \\
				0 & 1
			\end{bmatrix}
			\text{ and }
			AB = \begin{bmatrix}
				1 & 2 \\
				0 & 1
			\end{bmatrix}.
		\end{align*}
	\end{remark}
	\begin{proposition}
		\label{prop:2infty_&_spec}
		For $A \in \R^{p_{1} \times p_{2}}$, then
		\begin{equation}
			\|A\|_{\ttinf}
			\le \|A\|_{2}
			\le \textnormal{min}\{\sqrt{p_1} \|A\|_{\ttinf}, \sqrt{p_{2}} \|A^{\top}\|_{\ttinf}\}.
		\end{equation}
	\end{proposition}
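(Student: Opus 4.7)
The plan is to chain together the elementary inequalities between the $\ell_2$ and $\ell_\infty$ norms on $\R^{p_1}$, namely $\|y\|_\infty \le \|y\|_2 \le \sqrt{p_1}\,\|y\|_\infty$ for $y \in \R^{p_1}$, applied to vectors of the form $y = Ax$ with $x \in \R^{p_2}$ of unit Euclidean norm, together with the standard identity $\|A\|_2 = \|A^\top\|_2$ to swap the roles of $p_1$ and $p_2$.

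For the first inequality, I would take the supremum over unit vectors $x \in \R^{p_2}$ in the bound $\|Ax\|_\infty \le \|Ax\|_2$, which by definition of the spectral norm is at most $\|A\|_2$. This gives $\|A\|_{2\to\infty} \le \|A\|_2$ immediately.

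For the upper bound $\|A\|_2 \le \sqrt{p_1}\,\|A\|_{2\to\infty}$, I would similarly take the supremum over unit $x \in \R^{p_2}$ in the elementary vector-norm bound $\|Ax\|_2 \le \sqrt{p_1}\,\|Ax\|_\infty$. The right-hand side is at most $\sqrt{p_1}\,\|A\|_{2\to\infty}$ by definition, while the left-hand side supremum equals $\|A\|_2$. To obtain the other upper bound $\|A\|_2 \le \sqrt{p_2}\,\|A^\top\|_{2\to\infty}$, I would invoke the fact that the spectral norm is preserved under transposition, $\|A\|_2 = \|A^\top\|_2$, and then apply the bound just established to $A^\top \in \R^{p_2 \times p_1}$ in place of $A$. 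Taking the minimum of the two upper bounds yields the claimed inequality.

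There is no substantive obstacle here; the entire argument is a two-line application of the inequalities $\|y\|_\infty \le \|y\|_2 \le \sqrt{p_1}\,\|y\|_\infty$ for $y \in \R^{p_1}$, combined with the variational definitions of $\|\cdot\|_{2\to\infty}$ and $\|\cdot\|_2$ and the transpose-invariance of the spectral norm.
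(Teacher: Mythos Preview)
Your proposal is correct and follows essentially the same argument as the paper: both directions come from the elementary bounds $\|y\|_\infty \le \|y\|_2 \le \sqrt{p_1}\,\|y\|_\infty$ applied to $y=Ax$ and maximized over unit $x$, with the second upper bound obtained via $\|A\|_2=\|A^\top\|_2$.
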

	\begin{proof}
		The first inequality is obvious since
		\begin{equation*}
			\|A\|_{\ttinf}
			= \underset{\|x\|_{2}=1}{\text{sup}} \underset{i \in [p_1]}{\text{max}}\left| \langle Ax,e_{i}\rangle\right|
			\le \underset{\|x\|_{2}=1}{\text{sup}} \underset{\|y\|_{2}=1}{\text{sup}}\left| \langle Ax,y\rangle\right|
			= \|A\|_{2}.
		\end{equation*}
		The second inequality holds by an application of the Cauchy-Schwarz inequality together with the vector norm relationship $\|Ax\|_{2} \le \sqrt{p_{1}}\|Ax\|_{\infty}$ for $Ax \in \R^{p_{1}}$. In particular,
		\begin{equation*}
			\underset{\|x\|_{2}=1}{\text{sup}} \underset{\|y\|_{2}=1}{\text{sup}}\left| \langle Ax,y\rangle\right|
			\le \underset{\|x\|_{2}=1}{\text{sup}}\|Ax\|_{2}
			\le \sqrt{p_1}\underset{\|x\|_{2}=1}{\text{sup}}\|Ax\|_{\infty}
			= \sqrt{p_1} \|A\|_{\ttinf}.
		\end{equation*}
		By the transpose-invariance of the spectral norm, then similarly
		\begin{equation*}
			\|A\|_{2} = \|A^{\top}\|_{2} \le \sqrt{p_{2}}\|A^{\top}\|_{\ttinf}. \qedhere
		\end{equation*}
	\end{proof}
	\begin{remark}
		Proposition~\ref{prop:2infty_&_spec} is sharp. Indeed, for the second inequality, take $A := \{1/\sqrt{p_{2}}\}^{p_{1} \times p_{2}}$. Then $\|A\|_{2\rightarrow\infty}=1$ and $\|A^{\top}\|_{2\rightarrow\infty}=\sqrt{p_{1}/p_{2}}$ while $\|A\|_{2}=\sqrt{p_1}$. For ``tall, skinny'' rectangular matrices, the two-to-infinity norm can be much smaller than the spectral norm.
	\end{remark}
	\begin{proposition}
		\label{prop:2infty_relations}
		For $A \in \R^{p_{1} \times p_{2}}$, $B \in \R^{p_{2} \times p_{3}}$, and $C \in \R^{p_{4} \times p_{1}}$, then
		\begin{equation}
			\label{eqn:submult 2inf spec}
			\|AB\|_{\ttinf}
			\le \|A\|_{\ttinf}\|B\|_{2};
		\end{equation}
		\begin{equation}
			\label{eqn:submult 2inf inf}
			\|CA\|_{\ttinf} \le \|C\|_{\infty}\|A\|_{\ttinf}.
		\end{equation}
	\end{proposition}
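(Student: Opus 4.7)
Both inequalities follow directly from the definition of $\|\cdot\|_{\ttinf}$ together with Proposition~\ref{prop:2infty_as_max_Euclidean}. The key point is that $\|\cdot\|_{\ttinf}$ is subordinate with respect to the $\ell_2$ and $\ell_\infty$ vector norms, i.e.\ $\|Ay\|_\infty \le \|A\|_{\ttinf}\|y\|_2$ for every $y$, and that the two-to-infinity norm equals the maximum Euclidean row norm.

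For inequality~\eqref{eqn:submult 2inf spec}, the plan is to pass the inner matrix through the supremum. Fix any $x \in \R^{p_3}$ with $\|x\|_2=1$. Then $ABx = A(Bx)$, and applying the subordinate property to $y:=Bx \in \R^{p_2}$ yields $\|A(Bx)\|_\infty \le \|A\|_{\ttinf}\|Bx\|_2$. Since $\|Bx\|_2 \le \|B\|_2\|x\|_2 = \|B\|_2$ by the operator-norm definition of $\|\cdot\|_2$, taking the supremum over such $x$ produces the claim.

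For inequality~\eqref{eqn:submult 2inf inf}, the plan is to realize the rows of $CA$ as linear combinations of the rows of $A$. Writing $(CA)_i$ for the $i$th row of $CA$ and $A_k$ for the $k$th row of $A$, one has $(CA)_i = \sum_{k=1}^{p_1} C_{ik} A_k$. By the triangle inequality,
\begin{equation*}
\|(CA)_i\|_2 \le \sum_{k=1}^{p_1}|C_{ik}|\,\|A_k\|_2 \le \Bigl(\max_{k\in[p_1]}\|A_k\|_2\Bigr)\sum_{k=1}^{p_1}|C_{ik}| \le \|A\|_{\ttinf}\|C\|_\infty,
\end{equation*}
where the last step uses Proposition~\ref{prop:2infty_as_max_Euclidean} and the definition of $\|C\|_\infty$ as the maximum absolute row sum. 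Taking the maximum over $i$ and invoking Proposition~\ref{prop:2infty_as_max_Euclidean} once more gives the bound.

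Neither inequality presents a real obstacle; they are essentially bookkeeping once the row-norm characterization of $\|\cdot\|_{\ttinf}$ (Proposition~\ref{prop:2infty_as_max_Euclidean}) and the subordinate property are in hand. The only subtle point is using the correct matrix norm on each side: in~\eqref{eqn:submult 2inf spec} the ``outer'' matrix carries $\|\cdot\|_{\ttinf}$ while the ``inner'' matrix acts via its operator norm on $\ell_2$, whereas in~\eqref{eqn:submult 2inf inf} the roles reverse and $\|\cdot\|_\infty$ (the maximum absolute row sum, which is the operator norm on $\ell_\infty$) governs the left factor that reindexes rows.
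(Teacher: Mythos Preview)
Your proof is correct. For inequality~\eqref{eqn:submult 2inf spec} you argue exactly as the paper does: the subordinate property $\|A(Bx)\|_\infty \le \|A\|_{\ttinf}\|Bx\|_2$ followed by $\|Bx\|_2 \le \|B\|_2$.

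For inequality~\eqref{eqn:submult 2inf inf} you take a genuinely different route. The paper works dually: it writes $\|CA\|_{\ttinf} = \sup_{\|x\|_2=1}\max_i |\langle CAx,e_i\rangle|$, applies H\"older to get $|\langle CAx,e_i\rangle| \le \|C^\top e_i\|_1\|Ax\|_\infty$, and then identifies $\max_i\|C^\top e_i\|_1 \le \|C^\top\|_1 = \|C\|_\infty$. You instead use the row-norm characterization from Proposition~\ref{prop:2infty_as_max_Euclidean} directly, expressing each row of $CA$ as a linear combination of the rows of $A$ and bounding via the triangle inequality. Your argument is more elementary and self-contained (it needs only Proposition~\ref{prop:2infty_as_max_Euclidean} and the triangle inequality), while the paper's version makes the operator-norm interpretation more transparent: $\|C\|_\infty$ appears as the $\ell_\infty\!\to\!\ell_\infty$ operator norm acting on the left, dual to the $\ell_1$ norm on columns of $C^\top$. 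Both yield the same bound with no loss.
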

	\begin{proof}
		The subordinate property of $\|\cdot\|_{2\rightarrow\infty}$ yields that for all $x \in \R^{p_3}$, $\|ABx\|_{\infty}\le\|A\|_{2\rightarrow\infty}\|Bx\|_{2}$, hence maximizing over all unit vectors $x$ yields Eq.~(\ref{eqn:submult 2inf spec}).
		Equation~(\ref{eqn:submult 2inf inf}) follows from H\"{o}lder's inequality coupled with the fact that the vector norms $\|\cdot\|_{1}$ and $\|\cdot\|_{\infty}$ are dual to one another. Explicitly,
		\begin{align*}
			\|CA\|_{\ttinf}
			&= \underset{\|x\|_{2}=1}{\text{sup}} \underset{i \in [p_1]}{\text{max}}\left| \langle CAx,e_{i}\rangle\right|
			\le \underset{\|x\|_{2}=1}{\text{sup}} \underset{i \in [p_1]}{\text{max}} \|C^{\top}e_{i}\|_{1} \|Ax\|_{\infty}\\
			&\le \left[\underset{\|y\|_{1}=1}{\text{sup}} \|C^{\top}y\|_{1}\right]
			\left[\underset{\|x\|_{2}=1}{\text{sup}} \|Ax\|_{\infty}\right]
			= \|C^{\top}\|_{1}\|A\|_{\ttinf}\\
			&= \|C\|_{\infty}\|A\|_{\ttinf}. &\qedhere
		\end{align*}
	\end{proof}
	\begin{proposition}
		\label{prop:partial_isometry_invariance}
		For $A\in\R^{r \times s}$, $U\in\mathbb{O}_{p_{1},r}$, and $V\in\mathbb{O}_{p_{2},s}$,
		\begin{align}
			\|A\|_{2}&=\|UA\|_{2}=\|AV^{\top}\|_{2} = \|UAV^{\top}\|_{2};\\
			\|A\|_{\ttinf}&=\|AV^{\top}\|_{\ttinf}.
		\end{align}
		However, $\|UA\|_{\ttinf}$ need not equal $\|A\|_{\ttinf}$.
		\begin{proof}
			The first statement follows from Proposition~\ref{prop:2infty_relations}, the fact that the spectral norm is sub-multiplicative, and since $U^{\top}U$, $V^{\top}V$ are both the identity matrix. As for the final claim, consider the matrices
			\begin{equation}
				U := \begin{bmatrix}
					1/\sqrt{2} & 1/\sqrt{2} \\
					1/\sqrt{2} & -1/\sqrt{2}
				\end{bmatrix},
				A := \begin{bmatrix}
					1 & 1 \\
					0 & 1
				\end{bmatrix},
				UA= \begin{bmatrix}
					1/\sqrt{2} & \sqrt{2} \\
					1/\sqrt{2} & 0
				\end{bmatrix},
			\end{equation}
			for which $\|UA\|_{\ttinf} = \sqrt{5/2} > \sqrt{2} = \|A\|_{\ttinf}$.
		\end{proof}
	\end{proposition}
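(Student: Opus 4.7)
The plan is to prove the proposition in three parts corresponding to the three claims, using norm-subordination tools already established in Section~\ref{sec:Tech_tools_2infty}, and then verify the stated counterexample for the last claim.

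First I would handle the spectral-norm identities. For $\|UA\|_2 = \|A\|_2$, the cleanest route is to observe that since $U^{\top}U = I_r$, the map $y \mapsto Uy$ is an isometry from $\R^{r}$ to $\R^{p_1}$, so $\|UAx\|_2 = \|Ax\|_2$ for every $x$, and taking suprema over unit $x \in \R^{s}$ gives equality. The identity $\|AV^{\top}\|_2 = \|A\|_2$ follows by transpose invariance of the spectral norm applied to the previous case: $\|AV^{\top}\|_2 = \|VA^{\top}\|_2 = \|A^{\top}\|_2 = \|A\|_2$. The joint identity $\|UAV^{\top}\|_2 = \|A\|_2$ is then immediate by composition. (Alternatively, one could derive the right-multiplication identity by a change of variable argument: the image of the unit sphere of $\R^{p_2}$ under $V^{\top}$ is exactly the unit ball of $\R^s$, since $V^{\top}x$ has norm at most $1$ with equality achievable via $x = Vy$ for unit $y$.)

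Next I would prove $\|AV^{\top}\|_{\ttinf} = \|A\|_{\ttinf}$ using the constrained submultiplicativity established in Proposition~\ref{prop:2infty_relations}, specifically Eq.~(\ref{eqn:submult 2inf spec}). In one direction, $\|AV^{\top}\|_{\ttinf} \le \|A\|_{\ttinf}\|V^{\top}\|_2 = \|A\|_{\ttinf}$, since $V \in \mathbb{O}_{p_2,s}$ implies $\|V^{\top}\|_2 = 1$. In the other direction, write $A = (AV^{\top})V$, so that $\|A\|_{\ttinf} \le \|AV^{\top}\|_{\ttinf}\|V\|_2 = \|AV^{\top}\|_{\ttinf}$. (As a sanity check aligned with Proposition~\ref{prop:2infty_as_max_Euclidean}, the $i$th row of $AV^{\top}$ is $A_i V^{\top}$, whose squared Euclidean norm is $A_i V^{\top} V A_i^{\top} = \|A_i\|_2^2$, giving the same conclusion by direct computation.)

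Finally, for the last statement I would exhibit the small $2 \times 2$ counterexample already written in the proposition. Taking $U$ as the $2 \times 2$ Hadamard-type orthogonal matrix and $A$ as the specified upper-triangular matrix, the rows of $A$ have Euclidean norms $\sqrt{2}$ and $1$, giving $\|A\|_{\ttinf} = \sqrt{2}$, while the rows of $UA$ have norms $\sqrt{5/2}$ and $1/\sqrt{2}$, giving $\|UA\|_{\ttinf} = \sqrt{5/2} > \sqrt{2}$. I expect no genuine obstacle here; the only thing to be careful about is distinguishing which side of the product preserves the two-to-infinity norm, since the norm is row-oriented (via Proposition~\ref{prop:2infty_as_max_Euclidean}) and hence naturally respects right multiplication by isometric columns but not left multiplication, which is precisely what the counterexample illustrates.
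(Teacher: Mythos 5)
Your proposal is correct and follows essentially the same route as the paper: the spectral-norm identities reduce to $U^{\top}U=I_{r}$ and $V^{\top}V=I_{s}$ (whether phrased via isometry or via sub-multiplicativity), the two-to-infinity identity is the two-directional application of the bound $\|AB\|_{\ttinf}\le\|A\|_{\ttinf}\|B\|_{2}$ from Proposition~\ref{prop:2infty_relations}, and the final claim is settled by the same $2\times 2$ counterexample. Your row-wise sanity check via Proposition~\ref{prop:2infty_as_max_Euclidean} and your closing remark about the norm being row-oriented are apt and, if anything, more explicit than the paper's terse justification.
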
	
	\subsection{Singular subspace geometric bounds}
	\label{sec:Geometric_bounds}
	Let $U,\hat{U}\in\mathbb{O}_{p \times r}$ and $W_{U}\in\mathbb{O}_{r}$ denote the Frobenius-optimal Procrustes transformation. We shall use the fact that $\|\sin\Theta(\hat{U},U)\|_{2}=\|U_{\perp}^{\top}\hat{U}\|_{2}=\|(I-UU^{\top})\hat{U}\hat{U}^{\top}\|_{2}$ (\cite{Bhatia--1997}, Chapter 7).
	\begin{lemma}
		\label{lemma:geom_resid_bounds}
		Let $T\in\R^{r \times r}$ be arbitrary. Then,
		\begin{align}
			\|\sin \Theta(\hat{U},U)\|_{2}
			&= \|\hat{U} - UU^{\top}\hat{U}\|_{2}
			\le \|\hat{U}-UT\|_{2},\\
			\tfrac{1}{2}\|\sin \Theta(\hat{U},U) \|_{2}^{2}
			&\le \|U^{\top}\hat{U}-W_{U}\|_{2}
			\le \|\sin \Theta(\hat{U},U) \|_{2}^{2}.
		\end{align}
	\end{lemma}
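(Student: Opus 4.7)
The plan is to treat the two displayed inequalities independently, since they rest on different ingredients: the first is a Pythagorean decomposition in the spectral norm, while the second is a direct SVD computation exploiting the explicit form of the Frobenius-optimal Procrustes rotation $W_{U} = W_{1}W_{2}^{\top}$, where $U^{\top}\hat U = W_{1}\Sigma_{U}W_{2}^{\top}$.

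For the first claim, I would start from the algebraic identity
\[
\hat{U} - UT \;=\; U\bigl(U^{\top}\hat{U} - T\bigr) \;+\; (I - UU^{\top})\hat{U} \;=\; U\bigl(U^{\top}\hat{U} - T\bigr) \;+\; U_{\perp}U_{\perp}^{\top}\hat{U},
\]
which splits $\hat U - UT$ into components in the ranges of $U$ and $U_{\perp}$. Left-multiplying by the orthogonal matrix $[U\,|\,U_{\perp}]^{\top}$ and using spectral-norm invariance under orthogonal transformations (Proposition~\ref{prop:partial_isometry_invariance}), I would rewrite
\[
\|\hat{U}-UT\|_{2} \;=\; \left\| \begin{bmatrix} U^{\top}\hat{U} - T \\ U_{\perp}^{\top}\hat{U} \end{bmatrix} \right\|_{2} \;\ge\; \|U_{\perp}^{\top}\hat{U}\|_{2} \;=\; \|\sin\Theta(\hat{U},U)\|_{2},
\]
where the inequality $\bigl\|[A^{\top},B^{\top}]^{\top}\bigr\|_{2}\ge\|B\|_{2}$ follows from $\|Mx\|_{2}^{2} = \|Ax\|_{2}^{2}+\|Bx\|_{2}^{2}\ge\|Bx\|_{2}^{2}$ and the final identity is the characterization of $\sin\Theta$-distance recalled immediately before the lemma. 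Choosing $T = U^{\top}\hat{U}$ annihilates the top block and achieves equality, which simultaneously establishes the stated identity $\|\hat U - UU^{\top}\hat U\|_{2} = \|\sin\Theta(\hat U,U)\|_{2}$.

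For the second claim, I would substitute the SVD above to obtain
\[
U^{\top}\hat{U} - W_{U} \;=\; W_{1}\bigl(\Sigma_{U} - I_{r}\bigr)W_{2}^{\top}.
\]
Since the singular values of $U^{\top}\hat{U}$ are the cosines of the canonical angles and hence lie in $[0,1]$, orthogonal invariance of the spectral norm yields $\|U^{\top}\hat{U}-W_{U}\|_{2} = 1 - \sigma_{r}(U^{\top}\hat{U})$. On the other hand, the canonical-angle definition of $\Theta(\hat{U},U)$ gives
\[
\|\sin\Theta(\hat U,U)\|_{2}^{2} \;=\; 1 - \sigma_{r}(U^{\top}\hat{U})^{2} \;=\; \bigl(1-\sigma_{r}(U^{\top}\hat U)\bigr)\bigl(1+\sigma_{r}(U^{\top}\hat U)\bigr).
\]
Since $1 + \sigma_{r}(U^{\top}\hat U) \in [1,2]$, dividing this identity through by $1+\sigma_{r}(U^{\top}\hat U)$ immediately delivers both the upper and lower bounds in the asserted two-sided inequality.

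The only mildly subtle moment is the orthogonal block decomposition used in the first inequality; everything else is routine bookkeeping with the SVD, so I do not anticipate any genuine obstacle.
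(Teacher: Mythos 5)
Your proof is correct. For the second two-sided inequality your argument and the paper's are essentially identical: both reduce $\|U^{\top}\hat{U}-W_{U}\|_{2}$ to $1-\sigma_{r}(U^{\top}\hat{U})$ via the SVD of $U^{\top}\hat U$ and orthogonal invariance, and both relate this to $\|\sin\Theta\|_{2}^{2}=1-\sigma_{r}(U^{\top}\hat{U})^{2}$; your factoring $(1-\sigma_{r})(1+\sigma_{r})$ with $1+\sigma_{r}\in[1,2]$ is a cleaner way to phrase what the paper does by applying $\cos\theta\ge\cos^{2}\theta$ and $1+\cos\theta\le 2$ separately. For the first inequality, however, your route is genuinely different. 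The paper expands $\|\hat U-UT\|_{2}^{2}$ as a Rayleigh quotient, observes that the Gram matrix $(\hat U - UT)^{\top}(\hat U - UT)$ equals $(I-\hat U^{\top}UU^{\top}\hat U)+(T-U^{\top}\hat U)^{\top}(T-U^{\top}\hat U)$, and concludes by positivity of the second PSD term. You instead left-multiply $\hat U - UT$ by the orthogonal matrix $[U\,|\,U_{\perp}]^{\top}$, write the result in block form as $\bigl[\,(U^{\top}\hat U-T)^{\top},\ (U_{\perp}^{\top}\hat U)^{\top}\,\bigr]^{\top}$, and drop the top block by the Pythagorean relation $\|Mx\|_{2}^{2}=\|Ax\|_{2}^{2}+\|Bx\|_{2}^{2}$. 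The block argument is more transparent: it makes the optimality of $T=U^{\top}\hat U$ visually obvious (it annihilates the top block) and immediately yields the equality $\|\hat U - UU^{\top}\hat U\|_{2}=\|U_{\perp}^{\top}\hat U\|_{2}=\|\sin\Theta(\hat U,U)\|_{2}$ without the paper's extra step of rewriting $\hat U - UU^{\top}\hat U$ as $(I-UU^{\top})\hat U\hat U^{\top}$. The paper's Gram-matrix identity, by contrast, shows exactly what positive-semidefinite quantity is being discarded, which is occasionally useful when one wants a quantitative remainder. Both are correct and of comparable length.
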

	\begin{proof}
		The matrix difference $(\hat{U} - UU^{\top}\hat{U}) \in \R^{p \times r}$ represents the residual of $\hat{U}$ after orthogonally projecting onto the subspace spanned by the columns of $U$. Note that $\|A\|_{2}^{2}=\|A^{\top}A\|_{2}=\text{sup}_{\|x\|_{2}=1}|\langle A^{\top}Ax,x\rangle|$, and so several intermediate steps of computation yield that for any $T \in \R^{r \times r}$,
		\begin{align*}
			\|\hat{U} - UU^{\top}\hat{U}\|_{2}^{2}
			&=
			\text{sup}_{\|x\|_{2}=1}|\langle(\hat{U} - UU^{\top}\hat{U})^{\top}(\hat{U} - UU^{\top}\hat{U})x,x\rangle|\\
			&= \text{sup}_{\|x\|_{2}=1}|\langle(I-\hat{U}^{\top}U U^{\top}\hat{U})x,x\rangle|\\
			&\le \text{sup}_{\|x\|_{2}=1}\left(|\langle(I-\hat{U}^{\top}U U^{\top}\hat{U})x,x\rangle| + \|(T - U^{\top}\hat{U})x\|_{2}^{2}\right)\\
			&= \text{sup}_{\|x\|_{2}=1}|\langle(\hat{U} - U T)^{\top}(\hat{U} - U T)x,x\rangle|\\
			&= \|\hat{U} - U T\|_{2}^{2}.
		\end{align*}
		On the other hand, by Proposition~\ref{prop:partial_isometry_invariance} and the above observation,
		\begin{equation*}
			\|\hat{U} - UU^{\top}\hat{U}\|_{2}
			=\|\hat{U}\hat{U}^{\top} - UU^{\top}\hat{U}\hat{U}^{\top}\|_{2}
			=\|(I-UU^{\top})\hat{U}\hat{U}^{\top}\|_{2}
			=\|\sin \Theta(\hat{U},U)\|_{2}.
		\end{equation*}
		The matrix difference $(U^{\top}\hat{U}-W_{U})\in\R^{r \times r}$ represents the extent to which $U^{\top}\hat{U}$ with singular value decomposition $W_{1}\Sigma_{U}W_{2}^{\top}$ is ``almost'' the Frobenius-optimal Procrustes transformation $W_{U}\equiv W_{1}W_{2}^{\top}$. The orthogonal invariance of the spectral norm together with the interpretation of canonical angles between $\hat{U}$ and $U$, denoted by $\{\theta_{i}\}$ with $\cos(\theta_{i})=\sigma_{i}(U^{\top}\hat{U}) \in [0,1]$, yields
		\begin{equation*}
			\|U^{\top}\hat{U}-W_{U}\|_{2}
			= \|W_{1}\Sigma_{U}W_{2}^{\top} - W_{1}W_{2}^{\top}\|_{2}
			= \|\Sigma_{U}-I_{r}\|_{2} 
			= 1 - \text{min}_{i}\cos(\theta_{i}).
		\end{equation*}
		Thus, both
		\begin{equation*}
			\|U^{\top}\hat{U}-W_{U}\|_{2}
			\le 1 - \text{min}_{i}\cos^{2}(\theta_{i})
			= \text{max}_{i}\sin^{2}(\theta_{i})
			= \|\sin \Theta(\hat{U},U) \|_{2}^{2}
		\end{equation*}
		and 
		\begin{equation*}
			\|U^{\top}\hat{U}-W_{U}\|_{2}
			\ge \tfrac{1}{2}(1 - \text{min}_{i}\cos^{2}(\theta_{i}))
			= \tfrac{1}{2}\text{max}_{i}\sin^{2}(\theta_{i})
			= \tfrac{1}{2}\|\sin \Theta(\hat{U},U) \|_{2}^{2}. \qedhere
		\end{equation*}	
	\end{proof}
	\begin{lemma}
		\label{lem:Frob_opt_in_spectral}
		The quantity $\|\hat{U}-UW_{U}\|_{2}$ satisfies the lower bound
		\begin{equation}
			\|\sin\Theta(\hat{U},U)\|_{2}
			\le \|\hat{U}-UW_{2}^{\star}\|_{2}
			\le \|\hat{U}-UW_{U}\|_{2}
		\end{equation}
		and satisfies the upper bound
		\begin{equation}
			\|\hat{U}-UW_{U}\|_{2}
			\le \|\sin\Theta(\hat{U},U)\|_{2} + \|\sin\Theta(\hat{U},U)\|_{2}^{2}.
		\end{equation}
		Taken together with Lemma 1 in \cite{Cai-Zhang--2016}, an improved upper bound is given by
		\begin{equation}
			\|\hat{U}-UW_{U}\|_{2}
			\le \textnormal{min}\{1+\|\sin\Theta(\hat{U},U)\|_{2}, \sqrt{2}\}\|\sin\Theta(\hat{U},U)\|_{2}.
		\end{equation}
	\end{lemma}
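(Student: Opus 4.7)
For the lower bound, I would argue in two stages. First, the inequality $\|\sin\Theta(\hat U,U)\|_2 \le \|\hat U - UW_2^\star\|_2$ is a standard $\sin\Theta$-Procrustes comparison and was already recorded in the discussion preceding Eq.~(\ref{eq:UhatUProcrustesForArbNorms}); it follows from the fact that for any $W\in\mathbb{O}_r$ one has $\|\sin\Theta(\hat U,U)\|_2 = \|(I-UU^\top)\hat U\|_2 \le \|\hat U - UW\|_2$ because $U(W - U^\top\hat U)$ lies in the column space of $U$ and is orthogonal to $(I-UU^\top)\hat U$. Second, the inequality $\|\hat U - UW_2^\star\|_2 \le \|\hat U - UW_U\|_2$ is simply the definition of $W_2^\star$ as a minimizer of $\|\hat U - U\,\cdot\,\|_2$ over $\mathbb{O}_r$; in particular it is at most the value attained at $W=W_U$.

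For the first upper bound, the plan is to decompose
\begin{equation*}
\hat U - UW_U = (\hat U - UU^\top\hat U) + U(U^\top\hat U - W_U)
\end{equation*}
and apply the triangle inequality in the spectral norm. The first summand has spectral norm exactly $\|\sin\Theta(\hat U,U)\|_2$ by the computation at the top of the proof of Lemma~\ref{lemma:geom_resid_bounds}. For the second summand, Proposition~\ref{prop:partial_isometry_invariance} gives $\|U(U^\top\hat U - W_U)\|_2 = \|U^\top\hat U - W_U\|_2$, and the upper half of Lemma~\ref{lemma:geom_resid_bounds} bounds this by $\|\sin\Theta(\hat U,U)\|_2^2$. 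Adding the two bounds gives the claimed $\|\sin\Theta(\hat U,U)\|_2 + \|\sin\Theta(\hat U,U)\|_2^2$.

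For the refined upper bound, I would combine the bound just obtained with the Frobenius-style bound $\|\hat U - UW_U\|_2 \le \sqrt{2}\,\|\sin\Theta(\hat U,U)\|_2$, which is exactly what Lemma~1 of \cite{Cai-Zhang--2016} supplies (it is the spectral-norm counterpart of the well-known Frobenius inequality $\|\hat U - UW_U\|_{\mathrm F} \le \sqrt{2}\,\|\sin\Theta(\hat U,U)\|_{\mathrm F}$, obtained by comparing canonical-angle expressions). Taking the minimum of the two upper bounds yields $\|\hat U - UW_U\|_2 \le \min\{1 + \|\sin\Theta(\hat U,U)\|_2,\,\sqrt 2\}\,\|\sin\Theta(\hat U,U)\|_2$.

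There is no serious obstacle here; the only piece that is not purely mechanical is the lower half of Lemma~\ref{lemma:geom_resid_bounds} used to bound $\|U^\top\hat U - W_U\|_2$ by $\|\sin\Theta(\hat U,U)\|_2^2$, but that lemma is already in hand, so the present statement reduces to assembling it with a triangle inequality and invoking the quoted $\sqrt{2}$ bound from \cite{Cai-Zhang--2016}.
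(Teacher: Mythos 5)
Your proof is correct and follows essentially the same route as the paper's: decompose $\hat U - UW_U = (\hat U - UU^\top\hat U) + U(U^\top\hat U - W_U)$, apply the triangle inequality, use Lemma~\ref{lemma:geom_resid_bounds} for both summands, and finish by invoking the $\sqrt{2}\,\|\sin\Theta(\hat U,U)\|_2$ bound from Lemma~1 of \cite{Cai-Zhang--2016}. The only cosmetic difference is that for the lower bound you re-derive $\|\sin\Theta(\hat U,U)\|_2 \le \|\hat U - UW\|_2$ from scratch via the orthogonality of $(I-UU^\top)\hat U$ and $U(U^\top\hat U - W)$ (a valid argument, since $\|Mx\|_2^2 = \|Ax\|_2^2 + \|Bx\|_2^2$ whenever $A,B$ have orthogonal ranges), whereas the paper simply sets $T = W_2^\star$ in Lemma~\ref{lemma:geom_resid_bounds}, which already records the needed inequality.
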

	\begin{proof}
		The lower bound follows from setting $T=W_{2}^{\star}$ in Lemma~\ref{lemma:geom_resid_bounds} together with the definition of $W_{2}^{\star}$. Again by Lemma~\ref{lemma:geom_resid_bounds} and together with the triangle inequality,
		\begin{align*}
			\|\hat{U}-UW_{U}\|_{2}
			&\le \|\hat{U}-UU^{\top}\hat{U}\|_{2} + \|U(U^{\top}\hat{U}-W_{U})\|_{2}\\
			&\le \|\sin\Theta(\hat{U},U)\|_{2} + \|\sin\Theta(\hat{U},U)\|_{2}^{2}.
		\end{align*}
		The proof of Lemma 1 in \cite{Cai-Zhang--2016} establishes that
		\begin{equation*}
			\underset{W\in\mathbb{O}_{r}}{\textnormal{inf}}\|\hat{U}-UW\|_{2}
			\le \|\hat{U}-UW_{U}\|_{2}
			\le \sqrt{2}\|\sin\Theta(\hat{U},U)\|_{2},
		\end{equation*}
		which completes the proof.
	\end{proof}	
	For ease of reference and notation, Theorem~\ref{thrm:mod_Samworth} below states a version of the Davis-Kahan $\sin \Theta$ theorem \cite{Davis-Kahan--1970} in the language of \cite{Yu-Wang-Samworth--2015}. This amounts to a recasting of Theorem VII.3.2 in \cite{Bhatia--1997}, and so we omit the proof.
	\begin{theorem}
		\label{thrm:mod_Samworth}
		Let $X,\hat{X}\in\R^{p \times p}$ be symmetric matrices with eigenvalues $\lambda_{1} \ge \dots \ge \lambda_{p}$ and $\hat{\lambda}_{1} \ge \dots \ge \hat{\lambda}_{p}$, respectively. Write $E:=\hat{X}-X$ and fix $1 \le r \le s \le p$. Assume that $\delta_{\emph{\text{gap}}}:=\emph{\text{min}}(\lambda_{r-1}-\lambda_{r},\lambda_{s}-\lambda_{s+1})>0$ where $\lambda_{0}:=\infty$ and $\lambda_{p+1}:=-\infty$. Let $d=s-r+1$ and let $V:=[v_{r}|v_{r+1}|\dots|v_{s}]\in\R^{p \times d}$ and $\hat{V}:=[\hat{v}_{r}|\hat{v}_{r+1}|\dots|\hat{v}_{s}]\in\R^{p \times d}$ have orthonormal columns satisfying $X v_{j}=\lambda_{j}v_{j}$ and $\hat{X}\hat{v}_{j}=\hat{\lambda}_{j}\hat{v}_{j}$ for $j=r,r+1,\dots,s.$ Then
		\begin{equation}
			\|\sin\Theta(\hat{V},V)\|_{2} \le \left(\frac{2\|E\|_{2}}{\delta_{\emph{\text{gap}}}}\right).
		\end{equation}
	\end{theorem}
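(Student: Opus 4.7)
The plan is to reduce to the standard Sylvester-equation form of the Davis-Kahan $\sin\Theta$ theorem and then convert the \emph{perturbed} eigenvalue separation to the \emph{population} separation $\delta_{\mathrm{gap}}$ via Weyl's inequality; this pass-through is the hallmark observation of Yu, Wang and Samworth referenced in the theorem's phrasing.

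First I would dispose of the trivial regime $\|E\|_2 \ge \delta_{\mathrm{gap}}/2$, in which the claimed upper bound already exceeds one and hence holds automatically since $\|\sin\Theta(\hat V, V)\|_2 \le 1$. Assuming henceforth that $\|E\|_2 < \delta_{\mathrm{gap}}/2$, Weyl's inequality gives $|\hat\lambda_j - \lambda_j| \le \|E\|_2$ for every $j$, so that $\hat\lambda_j \ge \lambda_{r-1} - \|E\|_2 \ge \lambda_r + \delta_{\mathrm{gap}} - \|E\|_2$ for $j < r$ and $\hat\lambda_j \le \lambda_{s+1} + \|E\|_2 \le \lambda_s - \delta_{\mathrm{gap}} + \|E\|_2$ for $j > s$. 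Consequently every eigenvalue of $\hat X$ whose index lies outside $\{r, \ldots, s\}$ is separated from the population interval $[\lambda_s, \lambda_r]$ by strictly more than $\delta_{\mathrm{gap}}/2$.

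Next I would set up the Sylvester equation. Let $\hat V_\perp$ denote the matrix of eigenvectors of $\hat X$ complementary to $\hat V$, with corresponding diagonal matrix $\hat \Lambda_\perp$. From $X V = V \Lambda$, $\hat X \hat V_\perp = \hat V_\perp \hat \Lambda_\perp$, and $\hat X - X = E$, left- and right-multiplying then subtracting yields
\begin{equation*}
\hat \Lambda_\perp \bigl(\hat V_\perp^\top V\bigr) - \bigl(\hat V_\perp^\top V\bigr) \Lambda = \hat V_\perp^\top E V.
\end{equation*}
Since $\hat \Lambda_\perp$ and $\Lambda$ are diagonal with spectra separated by more than $\delta_{\mathrm{gap}}/2$ by the preceding step, the Sylvester super-operator $Y \mapsto \hat \Lambda_\perp Y - Y \Lambda$ is invertible with operator norm at most $2/\delta_{\mathrm{gap}}$, and the standard Bhatia estimate (Theorem~VII.2.3 of \cite{Bhatia--1997}) delivers
\begin{equation*}
\|\hat V_\perp^\top V\|_2 \le \frac{\|\hat V_\perp^\top E V\|_2}{\delta_{\mathrm{gap}}/2} \le \frac{2 \|E\|_2}{\delta_{\mathrm{gap}}}.
\end{equation*}
The CS-decomposition identity $\|\sin\Theta(\hat V, V)\|_2 = \|V_\perp^\top \hat V\|_2 = \|\hat V_\perp^\top V\|_2$ then gives the claim.

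The principal obstacle is the Sylvester-operator norm estimate itself: one must show that $Y \mapsto \hat \Lambda_\perp Y - Y \Lambda$ has inverse operator norm no larger than the reciprocal of the spectral separation, which is the genuine analytic content and is precisely where the Weyl-produced separation in the previous step is indispensable. Beyond that, the only clever ingredient is the factor-of-two gain obtained by transferring the gap from the perturbed spectrum back to the population spectrum; everything else is bookkeeping.
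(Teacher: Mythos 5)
The paper supplies no proof for this theorem: it explicitly states that the result is a recasting of Bhatia's Theorem VII.3.2 in the notation of Yu--Wang--Samworth, and omits the argument. Your proof correctly reconstructs exactly that recasting: dispose of the trivial regime $2\|E\|_2 \ge \delta_{\mathrm{gap}}$, use Weyl to convert the population gap into a separation of width at least $\delta_{\mathrm{gap}}/2$ between $\sigma(\Lambda)$ and $\sigma(\hat\Lambda_\perp)$, set up the Sylvester equation, and invoke $\|\sin\Theta(\hat V,V)\|_2 = \|\hat V_\perp^\top V\|_2$. The one point worth tightening is the Sylvester-operator estimate. The assertion that the super-operator $Y \mapsto \hat\Lambda_\perp Y - Y\Lambda$ has inverse spectral operator norm at most $1/(\delta_{\mathrm{gap}}/2)$ is \emph{not} true under arbitrary spectral separation (in general only $\pi/(2\delta)$ is available in the operator norm); the factor-free bound requires the annulus geometry in which one spectrum lies in a compact interval and the other avoids the $\delta$-dilation of that interval. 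Your Weyl step in fact delivers precisely this: $\sigma(\Lambda)\subset[\lambda_s,\lambda_r]$ while $\sigma(\hat\Lambda_\perp)$ avoids $(\lambda_s-\delta_{\mathrm{gap}}/2,\,\lambda_r+\delta_{\mathrm{gap}}/2)$. After centering at $(\lambda_s+\lambda_r)/2$, this is the disc/annulus form of Bhatia's Sylvester estimate, and the proof closes. Stating that configuration explicitly (rather than just ``spectra separated by $\delta_{\mathrm{gap}}/2$'') is what makes the appeal to Bhatia legitimate.
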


\subsection{Theorem~\ref{thrm:rectangular_decomp}}
\label{sec:rect_matrix_decomp}
\begin{proof}[Proof of Theorem~\ref{thrm:rectangular_decomp}]
	The matrices $X$ and $\hat{X}$ have rank at least $r$, so $\hat{U} \equiv \hat{X}\hat{V}\hat{\Sigma}^{-1}$ and $U W_{U} \equiv X V \Sigma^{-1} W_{U}$ by the block matrix formulation in Section~\ref{subsec:SVD_perturb_framework}. The explicit correspondence between $W_{U}$ and $U^{\top}\hat{U}$ along with subsequent left-multiplication by the matrix $U$ motivates the introduction of the projected quantity $\pm UU^{\top}\hat{U}$ and leads to
	\begin{align*}
		\hat{U} - U W_{U}
		&= (\hat{U} - UU^{\top}\hat{U}) + (UU^{\top}\hat{U} - U W_{U})\\
		&= (I-UU^{\top})\hat{X}\hat{V}\hat{\Sigma}^{-1} + U(U^{\top}\hat{U} - W_{U}).
	\end{align*}
	The matrix $U(U^{\top}\hat{U} - W_{U})$ can be meaningfully bounded in both spectral and two-to-infinity norm by Lemma~\ref{lem:Frob_opt_in_spectral} and Proposition~\ref{prop:2infty_relations}. Ignoring $U$ for the moment, the difference $U^{\top}\hat{U} - W_{U}$ represents the geometric approximation error between $U^{\top}\hat{U}$ and the orthogonal matrix $W_{U}$.
	
	It is not immediately clear how to control $(I-UU^{\top})\hat{X}\hat{V}\hat{\Sigma}^{-1}$ given the dependence on the perturbed quantity $\hat{X}$. If instead we replace $\hat{X}$ with $X$ and consider the matrix $(I-UU^{\top})X\hat{V}\hat{\Sigma}^{-1}$, then by the block matrix form in Section \ref{subsec:SVD_perturb_framework} one can check that $(I-UU^{\top})X = X(I-VV^{\top})$. Since $(I-UU^{\top})$ is an orthogonal projection and hence is idempotent, then $(I-UU^{\top})X\hat{V}\hat{\Sigma}^{-1}
	= (I-UU^{\top})X(\hat{V}-VV^{\top}\hat{V})\hat{\Sigma}^{-1}$.
	Therefore,
	\begin{align*}
		(I-UU^{\top})\hat{X}\hat{V}\hat{\Sigma}^{-1}
		&= (I-UU^{\top})E\hat{V}\hat{\Sigma}^{-1} + (I-UU^{\top})X(\hat{V}-VV^{\top}\hat{V})\hat{\Sigma}^{-1}.
	\end{align*}
	By Lemma~\ref{lemma:geom_resid_bounds} and Proposition~\ref{prop:2infty_relations}, the terms comprising the matrix product $(I-UU^{\top})X(\hat{V}-VV^{\top}\hat{V})\hat{\Sigma}^{-1}$ can be suitably controlled in norm. At times it shall be useful to further decompose $(I-UU^{\top})X(\hat{V}-V V^{\top} \hat{V})\hat{\Sigma}^{-1}$ as
	\begin{equation*}
		\label{eqn:third_matrix_decomp}
		\left((I-UU^{\top})X(\hat{V}-VW_{V})\hat{\Sigma}^{-1}\right)
		+ \left((I-UU^{\top})XV(W_{V}-V^{\top}\hat{V})\hat{\Sigma}^{-1}\right),
	\end{equation*} 
	where the second term vanishes since $U_{\perp}U_{\perp}^{\top}XV$ vanishes.
	
	As for the matrix $(I-UU^{\top})E\hat{V}\hat{\Sigma}^{-1}$, we do not assume explicit control of $\hat{V}$, so we rewrite the above matrix product in terms of $V$ and a corresponding residual quantity. A natural choice is to incorporate the orthogonal factor $W_{V}$. Specifically, introducing $\pm (I-UU^{\top})EVW_{V}\hat{\Sigma}^{-1}$ yields	
	\begin{align*}
		(I-UU^{\top})E\hat{V}\hat{\Sigma}^{-1}
		= (I-UU^{\top})E(\hat{V}-VW_{V})\hat{\Sigma}^{-1} + (I-UU^{\top})EVW_{V}\hat{\Sigma}^{-1}.
	\end{align*}
	Gathering right-hand sides of the above equations yields Theorem~\ref{thrm:rectangular_decomp}. Corollaries~\ref{cor:symmetric_decomp}~and~\ref{cor:rectangular_rewritten} are evident given that $U^{\top}U$ and $V^{\top}V$ are both simply the identity matrix. Corollary~\ref{cor:rectangular_expanded} is obtained from Corollary~\ref{cor:rectangular_rewritten} by additional straightforward algebraic manipulations. In applications, $(I-UU^{\top})EVW_{V}\hat{\Sigma}^{-1}$ can be shown to function as the leading order term.
\end{proof}

\subsection{Theorem~\ref{thrm:baselineProcrustesPerturbBound}}
\begin{proof}[Proof of Theorem~\ref{thrm:baselineProcrustesPerturbBound}]
	The assumption $\sigma_{r}(X) \ge 2\|E\|_{2}$ implies that $\sigma_{r}(\hat{X}) \ge \frac{1}{2}\sigma_{r}(X)$ since by Weyl's inequality for singular values, $\sigma_{r}(\hat{X})\ge\sigma_{r}(X)-\|E\|_{2}\ge\frac{1}{2}\sigma_{r}(X)$. The result then follows from Corollary~\ref{cor:rectangular_expanded} together with Proposition~\ref{prop:2infty_relations} and Lemma~\ref{lemma:geom_resid_bounds}.
\end{proof}
\subsection{Theorem~\ref{thrm:General_rectangular_bounds}}
\begin{proof}[Proof of Theorem~\ref{thrm:General_rectangular_bounds}]
	By Corollary~\ref{cor:rectangular_rewritten}, consider the decomposition
	\begin{align*}
		\hat{U}-UW_{U} &= (I-UU^{\top})E(VV^{\top})VW_{V}\hat{\Sigma}^{-1}\\
		&+ (I-UU^{\top})(E+X)(\hat{V}-VW_{V})\hat{\Sigma}^{-1}\\
		&+ U(U^{\top}\hat{U}-W_{U}).
	\end{align*}
	Applying Proposition~\ref{prop:2infty_relations} and Lemma~\ref{lemma:geom_resid_bounds} then yields
	\begin{align*}
		\|\hat{U}-UW_{U}\|_{2\rightarrow\infty}
		&\le \|(U_{\perp}U_{\perp}^{\top})E(VV^{\top})\|_{\ttinf}/\sigma_{r}(\hat{X})\\
		&+ \left(C_{E,U}+C_{X,U}\right)\|\hat{V}-VW_{V}\|_{\ttinf}/\sigma_{r}(\hat{X})\\
		&+ \|\sin\Theta(\hat{U},U)\|_{2}^{2}\|U\|_{\ttinf}
	\end{align*}
	and similarly
	\begin{align*}
		\|\hat{V}-VW_{V}\|_{\ttinf}
		&\le \|(V_{\perp}V_{\perp}^{\top})E^{\top}(UU^{\top})\|_{\ttinf}/\sigma_{r}(\hat{X})\\
		&+ \left(C_{E,V}+C_{X,V}\right)\|\hat{U}-UW_{U}\|_{\ttinf}/\sigma_{r}(\hat{X})\\
		&+ \|\sin\Theta(\hat{V},V)\|_{2}^{2}\|V\|_{\ttinf}.
	\end{align*}
	By assumption,
	\begin{equation*}
		\sigma_{r}(X) \ge \text{max}\{2\|E\|_{2},(2/\alpha)C_{E,U},(2/\alpha^{\prime})C_{E,V},(2/\beta)C_{X,U},(2/\beta^{\prime})C_{X,V}\}
	\end{equation*}
	for some constants $0<\alpha,\alpha^{\prime},\beta,\beta^{\prime}<1$ such that $\delta:=(\alpha+\beta)(\alpha^{\prime}+\beta^{\prime})<1$. The assumption $\sigma_{r}(X) \ge 2\|E\|_{2}$ implies that $\sigma_{r}(\hat{X})\ge\sigma_{r}(X)-\|E\|_{2}\ge\frac{1}{2}\sigma_{r}(X)$ by Weyl's inequality for singular values. Combining the above observations and rearranging terms yields
	\begin{align*}
		(1-\delta)\|\hat{U}-UW_{U}\|_{2\rightarrow\infty}
		&\le 2\|(U_{\perp}U_{\perp}^{\top})E(VV^{\top})\|_{\ttinf}/\sigma_{r}(X)\\
		&+ 2(\alpha+\beta)\|(V_{\perp}V_{\perp}^{\top})E^{\top}(UU^{\top})\|_{\ttinf}/\sigma_{r}(X) \\
		&+ \|\sin\Theta(\hat{U},U)\|_{2}^{2}\|U\|_{\ttinf} \\
		&+ \left(\alpha+\beta\right)\|\sin\Theta(\hat{V},V)\|_{2}^{2}\|V\|_{\ttinf}.
	\end{align*}
	The first claim follows since $(\alpha+\beta)<1$. When $\textnormal{rank}(X)=r$, then $U_{\perp}U_{\perp}^{\top}X$ vanishes. Corollary~\ref{cor:symmetric_decomp} then yields the simpler form
	\begin{align*}
		\hat{U}-UW_{U} &= (I-UU^{\top})E(VV^{\top})VW_{V}\hat{\Sigma}^{-1}\\
		&+ (I-UU^{\top})E(\hat{V}-VW_{V})\hat{\Sigma}^{-1}\\
		&+ U(U^{\top}\hat{U}-W_{U}),
	\end{align*}
	and similarly for $\hat{V}-VW_{V}$. In this case, the bound holds without needing to consider either $C_{X,U}$ or $C_{X,V}$.
\end{proof}
\subsection{Corollary~\ref{cor:reFanThrmRect}}
\begin{proof}[Proof of Corollary~\ref{cor:reFanThrmRect}]
	By Theorem~\ref{thrm:General_rectangular_bounds},
	\begin{align*}
		(1-\delta)\|\hat{U}-UW_{U}\|_{\ttinf}
		&\le 2\|(U_{\perp}U_{\perp}^{\top})E(VV^{\top})\|_{\ttinf}/\sigma_{r}(X)\\
		&+ 2\|(V_{\perp}V_{\perp}^{\top})E^{\top}(UU^{\top})\|_{\ttinf}/\sigma_{r}(X) \\
		&+ \|\sin\Theta(\hat{U},U)\|_{2}^{2}\|U\|_{2\rightarrow\infty} \\
		&+ \|\sin\Theta(\hat{V},V)\|_{2}^{2}\|V\|_{2\rightarrow\infty}.
	\end{align*}
	Applying Wedin's $\sin\Theta$ theorem together with the general matrix fact that $\|E\|_{2} \le \text{max}\{\|E\|_{\infty},\|E\|_{1}\}$ and the assumption $\sigma_{r}(X) \ge 2\|E\|_{2}$ yields
	\begin{equation*}
		\underset{Z \in \{U,V\}}{\textnormal{ max}}\left\{\|\sin\Theta(\hat{Z},Z)\|_{2}\right\}
		\le \textnormal{min}\left\{
		\left(\frac{2\times\text{max}\{\|E\|_{\infty},\|E\|_{1}\}}{\sigma_{r}(X)}\right),
		1
		\right\}.
	\end{equation*}
	Using properties of the two-to-infinity norm, then
	\begin{align*}
		\|(U_{\perp}U_{\perp}^{\top})E(VV^{\top})\|_{\ttinf}
		&\le \|E(VV^{\top})\|_{\ttinf} + \|(UU^{\top})E(VV^{\top})\|_{\ttinf}\\
		&\le \|EV\|_{\ttinf} + \|U\|_{\ttinf}\|U^{\top}EV\|_{2}\\
		&\le \|E\|_{\infty}\|V\|_{\ttinf} + \|U\|_{\ttinf}\text{max}\{\|E\|_{\infty},\|E\|_{1}\}\\
		&\le 2 \times \underset{\eta \in \{1,\infty\}}{\textnormal{max}}\left\{\|E\|_{\eta}\right\}\times
		\underset{Z\in\{U,V\}}{\textnormal{max}}\left\{\|Z\|_{\ttinf}\right\}.
	\end{align*}
	Similarly,
	\begin{align*}
		\|(V_{\perp}V_{\perp}^{\top})E^{\top}(UU^{\top})\|_{\ttinf}
		&\le \|E\|_{1}\|U\|_{\ttinf} + \|V\|_{\ttinf}\text{max}\{\|E\|_{\infty},\|E\|_{1}\}\\
		&\le 2 \times \underset{\eta \in \{1,\infty\}}{\textnormal{max}}\left\{\|E\|_{\eta}\right\}\times
		\underset{Z\in\{U,V\}}{\textnormal{max}}\left\{\|Z\|_{\ttinf}\right\}.
	\end{align*}
	Hence
	$
	(1-\delta)\|\hat{U}-UW_{U}\|_{\ttinf}
	\le 12
	\times
	\underset{\eta \in \{1, \infty\}}{\textnormal{max}}\left\{\frac{\|E\|_{\eta}}{\sigma_{r}(X)}\right\}\times
	\underset{Z \in \{U, V\}}{\textnormal{max}}\left\{\|Z\|_{\ttinf}\right\}
	$.
\end{proof}
\subsection{Theorem~\ref{thrm:generalCovarEst}}
\label{sec:covarianceProof}
\begin{proof}[Proof of Theorem~\ref{thrm:generalCovarEst}]
	For ease of presentation, we use $C>0$ to denote various constants that are allowed to depend on one another. Both $n$ and $d$ are taken to be large.
	
	By hypothesis $\rmMax\{\mathfrak{r}(\Gamma), \log d\} = o(n)$, where $\mathfrak{r}(\Gamma) := \textnormal{trace}(\Gamma)/ \sigma_{1}(\Gamma)$ denotes the effective rank of $\Gamma$. In the present multivariate Gaussian covariance matrix setting, it follows from \cite{koltchinskii2017concentration,koltchinskii2017pca} that there exists some constant $C>0$ such that with probability at least $1 - \tfrac{1}{3}d^{-2}$,
	\begin{equation*}
		\|E_{n}\|_{2} \le C \sigma_{1}(\Gamma)\sqrt{\tfrac{\textrm{max}\{\mathfrak{r}(\Gamma), \log d\}}{n}}.
	\end{equation*}
	By hypothesis $\sigma_{1}(\Gamma) /\sigma_{r}(\Gamma)\le C$, and so together with the above observations, then $\sigma_{r}(\Gamma) \ge 2\|E_{n}\|_{2}$ with high probability. Theorem~\ref{thrm:baselineProcrustesPerturbBound} thus yields
	\begin{align*}
		\|\hat{U} - UW_{U}\|_{\ttinf}
		&\le C\|(U_{\perp}U_{\perp}^{\top})E_{n}(UU^{\top})\|_{\ttinf}/\sigma_{r}(\Gamma)\\
		&+ C\|(U_{\perp}U_{\perp}^{\top})E_{n}(U_{\perp}U_{\perp}^{\top})\|_{\ttinf}\|\sin\Theta(\hat{U},U)\|_{2}/\sigma_{r}(\Gamma)\\
		&+ C\|(U_{\perp}U_{\perp}^{\top})\Gamma(U_{\perp}U_{\perp}^{\top})\|_{\ttinf}\|\sin\Theta(\hat{U},U)\|_{2}/\sigma_{r}(\Gamma)\\
		&+ \|\sin\Theta(\hat{U},U)\|_{2}^{2}\|U\|_{\ttinf}.
	\end{align*}
	Moving forward, we record several important observations.
	\begin{itemize}
		\item By Proposition~\ref{prop:2infty_relations}, $\|(U_{\perp}U_{\perp}^{\top})E_{n}(UU^{\top})\|_{\ttinf}
		\le \|U_{\perp}U_{\perp}^{\top}\|_{\infty}\|E_{n}U\|_{\ttinf}$.
		\item By the (bounded coherence) assumption that $\|U\|_{\ttinf}\le C\sqrt{r/d}$, then
		\begin{equation*}
			\|U_{\perp}U_{\perp}^{\top}\|_{\infty}
			= \|I - UU^{\top}\|_{\infty}
			\le 1 + \sqrt{d}\|UU^{\top}\|_{\ttinf}
			\le(1+C)\sqrt{r}.
			\end{equation*}
		\item The random (Gaussian) vector $U_{\perp}^{\top}Y$ has covariance matrix $U_{\perp}\Sigma_{\perp}U_{\perp}^{\top}$, so by \cite{koltchinskii2017concentration,koltchinskii2017pca} there exists some constant $C>0$ such that with probability at least $1-\tfrac{1}{3}d^{-2}$,
		\begin{align*}
			\|(U_{\perp}U_{\perp}^{\top})E_{n}(U_{\perp}U_{\perp}^{\top})\|_{2}
			&\le C \sigma_{r+1}(\Gamma)\sqrt{\tfrac{\textrm{max}\{\mathfrak{r}(\Sigma_{\perp}), \log d\}}{n}}\\
			&\le C \sqrt{\sigma_{r+1}(\Gamma)}\sqrt{\sigma_{1}(\Gamma)}\sqrt{\tfrac{\textrm{max}\{\mathfrak{r}(\Gamma), \log d\}}{n}},
			\end{align*}
		where the final inequality holds since
		\begin{equation*}
			\mathfrak{r}(\Sigma_{\perp})
			= \left[\tfrac{\sigma_{1}(\Gamma)}{\sigma_{r+1}(\Gamma)}\right]\left[\mathfrak{r}(\Gamma)-\tfrac{\rmTr(\Sigma)}{\sigma_{1}(\Gamma)}\right]
			\le \left[\tfrac{\sigma_{1}(\Gamma)}{\sigma_{r+1}(\Gamma)}\right]\mathfrak{r}(\Gamma).
		\end{equation*}
		\item Note that $\|(U_{\perp}U_{\perp}^{\top})\Gamma(U_{\perp}U_{\perp}^{\top})\|_{\ttinf} = \|U_{\perp}\Sigma_{\perp}U_{\perp}^{\top}\|_{\ttinf} \le \sigma_{r+1}(\Gamma)$.
		\item Theorem~\ref{thrm:mod_Samworth} yields the bound
		$\|\sin\Theta(\hat{U},U)\|_{2} \le C\|E_{n}\|_{2}/\delta_{r}(\Gamma)$ with population gap given by $\delta_{r}(\Gamma) := \sigma_{r}(\Gamma) - \sigma_{r+1}(\Gamma) \ge c_{2}\sigma_{r}(\Gamma)> 0$.
	\end{itemize}
	Together, these observations yield
	\begin{align*}
		\|\hat{U}-UW_{U}\|_{\ttinf}
		&\le C\sqrt{r}\|E_{n}U\|_{\ttinf}/\sigma_{r}(\Gamma)\\
		&+ C\left(\tfrac{\textrm{max}\{\mathfrak{r}(\Gamma), \log d\}}{n}\right)\sqrt{\sigma_{r+1}(\Gamma)/\sigma_{r}(\Gamma)}\\
		&+ C\sqrt{\tfrac{\textrm{max}\{\mathfrak{r}(\Gamma), \log d\}}{n}}\left(\sigma_{r+1}(\Gamma)/\sigma_{r}(\Gamma)\right)\\
		&+ C\left(\tfrac{\textrm{max}\{\mathfrak{r}(\Gamma), \log d\}}{n}\right)\sqrt{r/d}.
	\end{align*}
	Now let $e_{i}$ denote the $i$th standard basis vector in $\R^{d}$ and $u_{j}$ denote the $j$th column of $U$. The matrix $E_{n}$ is symmetric, and $E_{n}U \in \R^{d \times r}$ can be bounded in two-to-infinity norm as
	\begin{equation*}
		\|E_{n}U\|_{\ttinf}
		\le \sqrt{r}\|E_{n}U\|_{\textrm{max}}
		= \sqrt{r}\underset{i \in [d], j \in [r]}{\textrm{max}}|\langle E_{n}e_{i},u_{j}\rangle|.
	\end{equation*}
	For each $(i,j) \in [d] \times [r]$, the scalar $\langle E_{n}e_{i}, u_{j}\rangle$ can be expanded as
	\begin{equation*}
		\langle E_{n}e_{i}, u_{j}\rangle
		= \frac{1}{n}\sum_{k=1}^{n}\left[(u_{j}^{\top}Y_{k})(Y_{k}^{\top}e_{i}) - u_{j}^{\top}\Gamma e_{i}\right]
		= \frac{1}{n}\sum_{k=1}^{n}\left[\langle Y_{k},u_{j}\rangle Y_{k}^{(i)} - \langle \Gamma e_{i}, u_{j} \rangle \right].
	\end{equation*}
	The product of (sub-)Gaussian random variables is sub-exponential, so for $i$ and $j$ fixed, $[\langle Y_{k}, u_{j} \rangle Y_{k}^{(i)} - \langle \Gamma e_{i}, u_{j} \rangle]$ with $1 \le k \le n$ is a collection of independent and identically distributed, centered sub-exponential random variables \cite{vershynin2018high}. To this end, the (univariate) sub-exponential norm, the (univariate) sub-Gaussian norm, and the vector sub-Gaussian norm are respectively
	\begin{align*}
		\|(Y^{(i)})^{2}\|_{\psi_{1}}
		&:= \underset{p \ge 1}{\textrm{ sup }} p^{-1}(\mathbb{E}[|(Y^{(i)})^{2}|^{p}])^{1/p};\\
		\|Y^{(i)}\|_{\psi_{2}}
		&:= \underset{p \ge 1}{\textrm{ sup }} p^{-1/2}(\mathbb{E}[|Y^{(i)}|^{p}])^{1/p};\\
		\|Y\|_{\psi_{2}}
		&:= \underset{\|x\|_{2}=1}{\textrm{ sup }}\|\langle Y, x \rangle \|_{\psi_{2}}.
	\end{align*}
	By properties of these (Orlicz) norms \cite{vershynin2018high}, it follows that there exists some constant $C>0$ such that the above sub-exponential random variables satisfy the bound
	\begin{equation*}
		\|\langle Y_{k}, u_{j} \rangle Y_{k}^{(i)} - \langle \Gamma e_{i}, u_{j} \rangle\|_{\psi_{1}}
		\le 2\|\langle Y_{k}, u_{j} \rangle Y_{k}^{(i)}\|_{\psi_{1}}
		\le C \|\langle Y, u_{j} \rangle\|_{\psi_{2}} \|Y^{(i)}\|_{\psi_{2}}.
	\end{equation*}
	The random vector $Y$ is mean zero multivariate Gaussian, hence for each $1 \le i \le d$, the norm of the $i$th component satisfies the variance-based bound
	\begin{equation*}
		\|Y^{(i)}\|_{\psi_{2}}
		\le C \underset{1 \le i \le d}{\textrm{ max }}\sqrt{\textrm{Var}(Y^{(i)})}
		\equiv C \nu(Y).
	\end{equation*}
	For each $j \in [r]$, $\textrm{Var}(\langle Y, u_{j} \rangle) = u_{j}^{\top}\Gamma u_{j} = \sigma_{j}(\Gamma)$, where $\langle Y, u_{j} \rangle$ is univariate Gaussian, so we have the spectral-based bound $\|\langle Y, u_{j} \rangle \|_{\psi_{2}} \le C \sqrt{\sigma_{1}(\Gamma)}$. Taken together, these observations establish a uniform bound over all $i,j,k$ of the form
	\begin{equation*}
		\|\langle Y_{k}, u_{j} \rangle Y_{k}^{(i)} - \langle \Gamma e_{i}, u_{j} \rangle\|_{\psi_{1}}
		\le C \nu(Y) \sqrt{\sigma_{1}(\Gamma)}.
	\end{equation*}
	By combining a union bound with Bernstein's inequality for sub-exponential random variables \cite{vershynin2018high}, it follows that there exists a constant $C>0$ such that with probability at least $1-\tfrac{1}{3}d^{-2}$,
	\begin{align*}
		\|E_{n}U\|_{\ttinf} \le C \nu(Y) \sqrt{\sigma_{1}(\Gamma)}\sqrt{r}\sqrt{\tfrac{\textrm{max}\{\mathfrak{r}(\Gamma), \log d\}}{n}}.
	\end{align*}
	The $r$ largest singular values of $\Gamma$ bound each other up to absolute multiplicative constants for all values of $d$ by assumption. Moreover, $\delta_{r}(\Gamma) \ge c_{2}\sigma_{r}(\Gamma)$ by assumption. Aggregating the above observations yields that with probability at least $1 - d^{-2}$,
	\begin{align*}
	\|\hat{U} - UW_{U}\|_{\ttinf}
	&\le C \sqrt{\tfrac{ \textrm{max}\{\mathfrak{r}(\Gamma), \log d\}}{n}}
	\left(
	\tfrac{\nu(Y)r}{\sqrt{\sigma_{r}(\Gamma)}}
	+ \tfrac{\sigma_{r+1}(\Gamma)}{\sigma_{r}(\Gamma)} \right)\\
	&+ C \left(\tfrac{ \textrm{max}\{\mathfrak{r}(\Gamma), \log d\}}{n}\right)
	\left(\sqrt{\tfrac{\sigma_{r+1}(\Gamma)}{\sigma_{r}(\Gamma)}}
		+ \sqrt{\tfrac{r}{d}}\right),
	\end{align*}
	which completes the proof.
\end{proof}

\subsection{Theorem~\ref{thrm:FanThrm2.1++}}
\label{sec:FanThrm2.1++proof}
\begin{proof}[Proof of Theorem~\ref{thrm:FanThrm2.1++}]
	Specializing Corollary~\ref{cor:rectangular_rewritten} to the case when $X$ is symmetric with $\textnormal{rank}(X)=r$ yields the decomposition
	\begin{align*}
		\hat{U}-UW_{U}
		&= E(\hat{U}-UW_{U})\hat{\Lambda}^{-1}
		- (UU^{\top})E(\hat{U}-UW_{U})\hat{\Lambda}^{-1}
		+ EUW_{U}\hat{\Lambda}^{-1}\\
		&- (UU^{\top})EUW_{U}\hat{\Lambda}^{-1}
		+ U(U^{\top}\hat{U}-W_{U}).
	\end{align*}
	Applying the technical results in Sections~\ref{sec:Tech_tools_2infty}~and~\ref{sec:Geometric_bounds} yields the term-wise bounds
	\begin{align*}
		\|E(\hat{U}-UW_{U})\hat{\Lambda}^{-1}\|_{\ttinf}
		&\le \|E\|_{\infty}\|\hat{U}-UW_{U}\|_{2\rightarrow\infty}|\hat{\lambda}_{r}|^{-1},\\
		\|(UU^{\top})E(\hat{U}-UW_{U})\hat{\Lambda}^{-1}\|_{\ttinf}
		&\le \|U\|_{\ttinf}\|E\|_{2}\|\hat{U}-UW_{U}\|_{2}|\hat{\lambda}_{r}|^{-1},\\
		\|EUW_{U}\hat{\Lambda}^{-1}\|_{\ttinf}
		&\le \|E\|_{\infty}\|U\|_{\ttinf}|\hat{\lambda}_{r}|^{-1},\\
		\|(UU^{\top})EUW_{U}\hat{\Lambda}^{-1}\|_{\ttinf}
		&\le \|U\|_{\ttinf}\|E\|_{2}||\hat{\lambda}_{r}|^{-1},\\
		\|U(U^{\top}\hat{U}-W_{U})\|_{\ttinf}
		&\le \|U\|_{\ttinf}\|U^{\top}\hat{U}-W_{U}\|_{2}.
	\end{align*}
	By assumption $E$ is symmetric, therefore $\|E\|_{2}\le\|E\|_{\infty}$. Furthermore, $\|\hat{U}-UW_{U}\|_{2}\le\sqrt{2}\|\sin\Theta(\hat{U},U)\|_{2}$ by Lemma~\ref{lem:Frob_opt_in_spectral}, and $\|\sin\Theta(\hat{U},U)\|_{2}\le 2\|E\|_{2}|\lambda_{r}|^{-1}$ by Theorem~\ref{thrm:mod_Samworth}. Therefore,
	\begin{align*}
		\|E(\hat{U}-UW_{U})\hat{\Lambda}^{-1}\|_{2\rightarrow\infty}
		&\le \|E\|_{\infty}\|\hat{U}-UW_{U}\|_{2\rightarrow\infty}|\hat{\lambda}_{r}|^{-1},\\
		\|(UU^{\top})E(\hat{U}-UW_{U})\hat{\Lambda}^{-1}\|_{2\rightarrow\infty}
		&\le 4\|E\|_{\infty}^{2}\|U\|_{2\rightarrow\infty}|\hat{\lambda}_{r}|^{-1}|\lambda_{r}|^{-1},\\
		\|EUW_{U}\hat{\Lambda}^{-1}\|_{2\rightarrow\infty}
		&\le \|E\|_{\infty}\|U\|_{2\rightarrow\infty}|\hat{\lambda}_{r}|^{-1},\\
		\|(UU^{\top})EUW_{U}\hat{\Lambda}^{-1}\|_{2\rightarrow\infty}
		&\le \|E\|_{\infty}\|U\|_{2\rightarrow\infty}|\hat{\lambda}_{r}|^{-1},\\
		\|U(U^{\top}\hat{U}-W_{U})\|_{2\rightarrow\infty}
		&\le 4\|E\|_{\infty}^{2}\|U\|_{2\rightarrow\infty}|\lambda_{r}|^{-2}.
	\end{align*}
	By assumption $|\lambda_{r}| \ge 4\|E\|_{\infty}$, so $|\hat{\lambda}_{r}|\ge\frac{1}{2}|\lambda_{r}|$ and therefore
	$\|E\|_{\infty}|\hat{\lambda}_{r}|^{-1}
	\le 2\|E\|_{\infty}|\lambda_{r}|^{-1}
	\le \frac{1}{2}$. Thus,
	\begin{align*}
		\|E(\hat{U}-UW_{U})\hat{\Lambda}^{-1}\|_{2\rightarrow\infty}
		&\le \tfrac{1}{2}\|\hat{U}-UW_{U}\|_{2\rightarrow\infty},\\
		\|(UU^{\top})E(\hat{U}-UW_{U})\hat{\Lambda}^{-1}\|_{2\rightarrow\infty}
		&\le 2\|E\|_{\infty}\|U\|_{2\rightarrow\infty}|\lambda_{r}|^{-1},\\
		\|EUW_{U}\hat{\Lambda}^{-1}\|_{2\rightarrow\infty}
		&\le 2\|E\|_{\infty}\|U\|_{2\rightarrow\infty}|\lambda_{r}|^{-1},\\
		\|(UU^{\top})EUW_{U}\hat{\Lambda}^{-1}\|_{2\rightarrow\infty}
		&\le 2\|E\|_{\infty}\|U\|_{2\rightarrow\infty}|\lambda_{r}|^{-1},\\
		\|U(U^{\top}\hat{U}-W_{U})\|_{2\rightarrow\infty}
		&\le \|E\|_{\infty}\|U\|_{2\rightarrow\infty}|\lambda_{r}|^{-1}.
	\end{align*}
	Hence,
	$
	\|\hat{U}-UW_{U}\|_{\textrm{max}}
	\le
	\|\hat{U}-UW_{U}\|_{2\rightarrow\infty}
	\le 14\left(\frac{\|E\|_{\infty}}{|\lambda_{r}|}\right)\|U\|_{2\rightarrow\infty}.
	$
\end{proof}

\subsection{Theorem~\ref{thrm:SubspaceRecovery}}
\label{sec:subspaceProof}
\begin{proof}[Proof of Theorem~\ref{thrm:SubspaceRecovery}]
	Rewriting Corollary~\ref{cor:rectangular_rewritten} in terms of the matrix $\hat{V}-VW_{V}$ as described in Theorem~\ref{thrm:rectangular_decomp} yields the decomposition
	\begin{align*}
	\hat{V}-VW_{V}
	&=  (V_{\perp}V_{\perp}^{\top})E^{\top}(UU^{\top})UW_{U}\hat{\Sigma}^{-1}\\
	&+ (V_{\perp}V_{\perp}^{\top})(E^{\top}+X^{\top})(\hat{U}-UW_{U})\hat{\Sigma}^{-1}\\
	&+ V(V^{\top}\hat{V}-W_{V}).
	\end{align*}
	Observe that $(UU^{\top})U \equiv U$, while the assumption $\text{rank}(X)=r$ implies that $(V_{\perp}V_{\perp}^{\top})X^{\top}$ vanishes.
	Applying Proposition~\ref{prop:2infty_relations}, Lemma~\ref{lemma:geom_resid_bounds}, and Lemma~\ref{lem:Frob_opt_in_spectral} to the remaining terms therefore yields
	\begin{align*}
		\|(V_{\perp}V_{\perp}^{\top})E^{\top}UW_{U}\hat{\Sigma}^{-1}\|_{\ttinf}
		&\le \|(V_{\perp}V_{\perp}^{\top})E^{\top}U\|_{\ttinf}/\sigma_{r}(\hat{X}),\\
		\|(V_{\perp}V_{\perp}^{\top})E^{\top}(\hat{U}-UW_{U})\hat{\Sigma}^{-1}\|_{\ttinf}
		&\le C\|(V_{\perp}V_{\perp}^{\top})E^{\top}\|_{\ttinf}\|\sin\Theta(\hat{U},U)\|_{2}/\sigma_{r}(\hat{X}),\\
		\|V(V^{\top}\hat{V}-W_{V})\|_{\ttinf}
		&\le \|\sin\Theta(\hat{V},V)\|_{2}^{2}\|V\|_{\ttinf}.
	\end{align*}
	
	The columns of $(V_{\perp}V_{\perp}^{\top})E^{\top}\in\R^{p_{2}\times p_{1}}$ are centered independent multivariate normal random vectors with covariance matrix $(V_{\perp}V_{\perp}^{\top})$, so row $i$ of $(V_{\perp}V_{\perp}^{\top})E^{\top}$ is a centered multivariate normal random vector with covariance matrix $\sigma_{i}^{2}I$, where $\sigma_{i}^{2}:=(V_{\perp}V_{\perp}^{\top})_{i,i} \le 1$ and $I\in\R^{p_{1} \times p_{1}}$ denotes the identity matrix. By Gaussian concentration and applying a union bound with $p_{2} \gg p_{1}$, then
	$\|(V_{\perp}V_{\perp}^{\top})E^{\top}\|_{2\rightarrow\infty}
	\le C_{r}\sqrt{p_{1}}\log(p_{2})$ with high probability.
	
	As for $(V_{\perp}V_{\perp}^{\top})E^{\top}U\in\R^{p_{2} \times r}$, the above argument implies that entry $(i,j)$ is $\mathcal{N}(0,\sigma_{i}^{2})$. Hence by the same approach, $\|(V_{\perp}V_{\perp}^{\top})E^{\top}U\|_{2\rightarrow\infty}
	\le C_{r}\log(p_{2})$ with high probability.
	
	By hypothesis $r \ll p_{1} \ll p_{2}$ and $\sigma_{r}(X) \ge C p_{2}/\sqrt{p_{1}}$, where $\|E\|_{2} \le C\sqrt{p_{2}}$ with high probability. In this setting, via \cite{Cai-Zhang--2016},
	\begin{equation*}
		\|\sin\Theta(\hat{U},U)\|_{2}
		\le C\left(\frac{\sqrt{p_{1}}}{\sigma_{r}(X)}\right)
		\text{  and  }
		\|\sin\Theta(\hat{V},V)\|_{2}
		\le C\left(\frac{\sqrt{p_{2}}}{\sigma_{r}(X)}\right).
	\end{equation*}
	Combining these observations yields
	\begin{align*}
		\left(\tfrac{\|(V_{\perp}V_{\perp}^{\top})E^{\top}U\|_{2\rightarrow\infty}}{\sigma_{r}(\hat{X})}\right)
		&\le C_{r}\left(\tfrac{\log(p_{2})}{\sigma_{r}(X)}\right);\\
		\left(\tfrac{\|(V_{\perp}V_{\perp}^{\top})E^{\top}\|_{2\rightarrow\infty}}{\sigma_{r}(\hat{X})}\right)\|\sin\Theta(\hat{U},U)\|_{2}
		&\le C_{r}\left(\tfrac{\log(p_{2})}{\sigma_{r}(X)}\right)\left(\tfrac{p_{1}}{\sigma_{r}(X)}\right);\\
		\|\sin\Theta(\hat{V},V)\|_{2}^{2}\|V\|_{2\rightarrow\infty}
		&\le C_{r}\left(\tfrac{\log(p_{2})}{\sigma_{r}(X)}\right)\left(\tfrac{\sqrt{p_{1}}}{\log(p_{2})}\right)\|V\|_{\ttinf}.
		\end{align*}	
	Hence, with high probability
	\begin{equation*}
		\|\hat{V}-VW_{V}\|_{\ttinf}
		\le C_{r}\left(\tfrac{\log(p_{2})}{\sigma_{r}(X)}\right)
			\left(1
				+ \left(\tfrac{p_{1}}{\sigma_{r}(X)}\right)
				+ \left(\tfrac{\sqrt{p_{1}}}{\log(p_{2})}\right)\|V\|_{\ttinf}
			\right).
	\end{equation*}
	If in addition $\sigma_{r}(X) \ge c p_{1}$ and $\|V\|_{\ttinf}\le c_{r}/\sqrt{p_{2}}$ for some $c, c_{r} > 0$, then the above bound simplifies to the form
	\begin{equation*}
		\|\hat{V}-VW_{V}\|_{\ttinf}
		\le C_{r}\left(\tfrac{\log(p_{2})}{\sigma_{r}(X)}\right),
	\end{equation*}
	which completes the proof.
\end{proof}

\subsection{Theorem~\ref{thrm:rhoSBM2infinity}}
\label{sec:sbmProof}
\begin{proof}[Proof of Theorem~\ref{thrm:rhoSBM2infinity}]
	We seek to bound $\|\hat{U}-UW_{U}\|_{\ttinf}$ and allow the constant $C>0$ to change from line to line. Our analysis will consider appropriate groupings of matrix elements in order to handle the multiple graph correlation structure.
	
	By assumption $\textnormal{rank}(\mathfrak{O})=r$ which implies that the matrix $(I-UU^{\top})\mathfrak{O}$ vanishes. Via Corollary~\ref{cor:symmetric_decomp}, this yields the bound
	\begin{align*}
		\|\hat{U} - U W_{U}\|_{2\rightarrow\infty}
		&\le \|(I-UU^{\top})(\hat{\mathfrak{O}}-\mathfrak{O})U W_{U} \hat{\Sigma}^{-1}\|_{2\rightarrow\infty}\\
		&+ \|(I-UU^{\top})(\hat{\mathfrak{O}}-\mathfrak{O})(\hat{U}-UW_{U})\hat{\Sigma}^{-1}\|_{2\rightarrow\infty}\\
		&+ \|U\|_{2\rightarrow\infty}\|U^{\top}\hat{U}-W_{U}\|_{2},
	\end{align*}
	which can be further weakened to the form
	\begin{align*}
		\|\hat{U} - U W_{U}\|_{2\rightarrow\infty}
		&\le \|(\hat{\mathfrak{O}}-\mathfrak{O})U\|_{2\rightarrow\infty}\|\hat{\Sigma}^{-1}\|_{2}\\
		&+ \|U\|_{2\rightarrow\infty}\|U^{\top}(\hat{\mathfrak{O}}-\mathfrak{O})U\|_{2}\|\hat{\Sigma}^{-1}\|_{2}\\
		&+ \|\hat{\mathfrak{O}}-\mathfrak{O}\|_{2}\|\hat{U}-UW_{U}\|_{2}\|\hat{\Sigma}^{-1}\|_{2}\\
		&+ \|U\|_{2\rightarrow\infty}\|U^{\top}\hat{U}-W_{U}\|_{2}.
	\end{align*}
	Applying the triangle inequality to the block matrix $\hat{\mathfrak{O}}-\mathfrak{O}$ yields a spectral norm bound of the form
	\begin{equation*}
	\|\hat{\mathfrak{O}}-\mathfrak{O}\|_{2}
	\le C\times\textnormal{max}\{\|A^{1}-P\|_{2},\|A^{2}-P\|_{2}\}.
	\end{equation*}
	By assumption, for $i=1,2$, the maximum expected degree of $G^{i}$, $\Delta$, satisfies $\Delta \gg \log^{4}(n)$, hence $\|A^{i}-P\|_{2} = \mathcal{O}(\sqrt{\Delta})$ with probability $1-o(1)$ by \cite{Lu-Peng--2013}. The assumption $\sigma_{r}(\mathfrak{O}) \ge c \Delta$ implies that $\sigma_{r}(\hat{\mathfrak{O}}) \ge C \Delta$ with probability $1-o(1)$ by Weyl's inequality, so $\|\hat{\Sigma}^{-1}\|_{2}=\mathcal{O}(1/\Delta)$. Combining these observations with Theorem~\ref{thrm:mod_Samworth} and the proof of Lemma~\ref{lem:Frob_opt_in_spectral} yields
	\begin{align*}
		\|\hat{U}-UW_{U}\|_{2}
		\le C\|\sin\Theta(\hat{U},U)\|_{2}
		\le C\|\hat{\mathfrak{O}}-\mathfrak{O}\|_{2}/\sigma_{r}(\mathfrak{O})
		= \mathcal{O}\left(1/\sqrt{\Delta}\right),
	\end{align*}
	which we note simultaneously provides a na\"{i}ve bound for $\|\hat{U}-UW_{U}\|_{\ttinf}$.	
	As for the matrix, $(\hat{\mathfrak{O}}-\mathfrak{O})U \in \R^{2n \times r}$,
	\begin{align*}
		\|(\hat{\mathfrak{O}}-\mathfrak{O})U\|_{2\rightarrow\infty}\le \sqrt{r}\underset{i\in[2n],j\in[r]}{\textnormal{ max }}|\langle (\hat{\mathfrak{O}}-\mathfrak{O})u_{j},e_{i}\rangle|.
	\end{align*}
	For all $1 \le k \le n$, $U_{(k+n),j}=U_{k,j}$, and for each $1 \le i \le n$ and $1 \le j \le r$,
	\begin{align*}
		&\langle(\hat{\mathfrak{O}}-\mathfrak{O})u_{j},e_{i}\rangle
		= e_{i}^{\top}(\hat{\mathfrak{O}}-\mathfrak{O})u_{j}
		= \sum_{k=1}^{n}\left(\tfrac{3}{2}A_{i,k}^{1}+\tfrac{1}{2}A_{i,k}^{2}-2P_{i,k}\right)U_{k,j}.
	\end{align*}
	Above, the roles of $A^{1}$ and $A^{2}$ are interchanged for $n+1 \le i \le 2n$.
	
	For any $1 \le i \le n$, the above expansion is a sum of independent (in $k$), bounded, mean zero random variables taking values in $[-2U_{k,j},2U_{k,j}]$.

	Hence by Hoeffding's inequality, with probability $1-o(1)$ in $n$,
	\begin{align*}
		\|(\hat{\mathfrak{O}}-\mathfrak{O})U\|_{2\rightarrow\infty}
		= \mathcal{O}_{r}(\log n).
	\end{align*}
	Similarly, for the matrix $U^{\top}(\hat{\mathfrak{O}}-\mathfrak{O})U\in\R^{r \times r}$,
	\begin{align*}
		\|U^{\top}(\hat{\mathfrak{O}}-\mathfrak{O})U\|_{2}
		&\le r \underset{i\in[r],j\in[r]}{\textnormal{ max }}|\langle (\hat{\mathfrak{O}}-\mathfrak{O})u_{j},u_{i}\rangle|,
	\end{align*}
	so for $1 \le i,j \le r$, then
	\begin{align*}
		\langle(\hat{\mathfrak{O}}-\mathfrak{O})u_{j},u_{i}\rangle
		= u_{i}^{\top}(\hat{\mathfrak{O}}-\mathfrak{O})u_{j}
		&= \sum_{1 \le l < k \le n}4(A_{l,k}^{1}+A_{l,k}^{2}-2P_{l,k})U_{k,j}U_{l,i}.
	\end{align*}
	This sum decomposes as a sum of independent, mean zero, bounded random variables taking values in $[-8U_{k,j}U_{l,i},8U_{k,j}U_{l,i}]$. By another application of Hoeffding's inequality, with probability $1-o(1)$,
	\begin{align*}
	\|U^{\top}(\hat{\mathfrak{O}}-\mathfrak{O})U\|_{2}
	= \mathcal{O}_{r}(\log n).
	\end{align*}
	Lemma~\ref{lemma:geom_resid_bounds} bounds $\|U^{\top}\hat{U}-W_{U}\|_{2}$ by $\|\sin\Theta(\hat{U},U)\|_{2}^{2}$ which is $\mathcal{O}(1/\Delta)$. Cumulatively, this demonstrates that
	$
	\|\hat{U} - U W_{U}\|_{\ttinf}
	=\mathcal{O}_{r}\left((\log n)/\Delta\right)
	$
	with probability $1-o(1)$ as $n\rightarrow\infty$.
\end{proof}

\section*{Acknowledgments}
The authors thank the editors, the associate editor, and the referees for their helpful comments and suggestions which have helped improve the exposition and presentation of this paper.

\bibliographystyle{amsplain}
\bibliography{2ToInf}

\end{document}